\numberwithin{theorem}{section}
\numberwithin{equation}{section}
\renewcommand{\cases}[1]{\left\{ \begin{array}{rl} #1 \end{array} \right.}
\newcommand{\smfrac}[2]{{\textstyle \frac{#1}{#2}}}
\def\R{\mathbb{R}}
\def\C{\mathbb{C}}
\def\N{\mathbb{N}}
\def\Z{\mathbb{Z}}
\def\LL{\mathrm{L}}
\def\CC{\mathrm{C}}
\def\HH{\mathrm{H}}
\def\<{\langle}
\def\>{\rangle}
\def\wto{\rightharpoonup}
\def\b{\big}
\def\B{\Big}
\def\bg{\bigg}
\def\Bg{\Bigg}
\def\Us{\mathscr{W}}
\def\Usz{\mathscr{W}_0}
\def\Hsi{\dot{\mathscr{W}}^{1,2}}
\def\Rg{\mathcal{R}}
\def\dist{\mathrm{dist}}
\def\mR{{\sf R}}
\def\sep{\,|\,}
\def\bsep{\,\b|\,}
\def\Bsep{\,\B|\,}
\def\del{\delta}
\def\ddel{\delta^2}
\def\dx{\,{\rm d}x}
\def\dt{\,{\rm d}t}
\def\ds{\,{\rm d}s}
\def\Aint#1{\mathchoice
  {\AXint\displaystyle\textstyle{#1}}%
  {\AXint\textstyle\scriptstyle{#1}}%
  {\AXint\scriptstyle\scriptscriptstyle{#1}}%
  {\AXint\scriptscriptstyle\scriptscriptstyle{#1}}%
  \!\int}
\def\AXint#1#2#3{{\setbox0=\hbox{$#1{#2#3}{\int}$}
\vcenter{\hbox{$#2#3$}}\kern-.5\wd0}}
\def\avint{\Aint-}
\def\ld{\lambda_d}
\def\ldR{\lambda_{d,R}}
\def\lLR{\lambda_{L,R}}
\def\L{\Lambda}
\def\yh{\hat{y}}
\def\alh{\hat{\alpha}}
\def\ycorr{\bar{y}}
\def\Bonds{\mathcal{B}}
\def\Cells{\mathcal{C}}
\def\Om{\Omega}
\def\BOm{\Bonds^\Omega}
\def\COm{\Cells^\Omega}
\def\Cores{\mathcal{C}^\pm}
\def\ind{{\rm index}}
\def\D{\mathcal{D}}
\def\Om{\Omega}
\def\TL{\mathcal{T}_\L}
\def\eps{\epsilon}
\DeclareMathOperator*{\argmin}{{\rm argmin}}
\def\conv{{\rm conv}}
\def\supp{{\rm supp}}
\def\psilin{\psi_{\mathrm{lin}}}
\def\diam{\mathrm{diam}}
\title[Stable screw dislocation configurations]{Analysis of stable screw dislocation configurations in an anti--plane lattice model}
\author{T. Hudson}
\address{T. Hudson \\ Mathematical Institute \\ University of Oxford \\
  Oxford OX1 3LB \\ UK}
\email{thomas.hudson@maths.ox.ac.uk}
\author{C. Ortner}
\address{C. Ortner\\ Mathematics Institute \\ Zeeman Building \\
  University of Warwick \\ Coventry CV4 7AL \\ UK}
\email{c.ortner@warwick.ac.uk}
\date{\today}
\thanks{CO was supported by the EPSRC grant EP/H003096 ``Analysis of
  atomistic-to-continuum coupling methods''. TH was supported by the
  UK EPSRC Science and Innovation award to the Oxford Centre for
  Nonlinear PDE (EP/E035027/1).}
\subjclass[2000]{74G25, 74G65, 70C20, 49J45, 74M25, 74E15}
\keywords{Screw dislocations, anti--plane shear, lattice models,
inverse function theorem}
\begin{document}

\begin{abstract}
  We consider a variational anti-plane lattice model and demonstrate that at zero temperature, there exist locally stable states containing screw dislocations, given conditions on the distance between the dislocations and on the distance between dislocations and the boundary of the crystal. In proving our results, we introduce approximate solutions which are taken from the theory of dislocations in linear elasticity, and use the inverse function theorem to show that local minimisers lie near them. This gives credence to the commonly held intuition that linear elasticity is essentially correct up to a few spacings from the dislocation core.
\end{abstract}
\maketitle

\section{Introduction}

Plasticity in crystalline materials is a highly complex phenomenon, a key 
aspect of which is the movement of dislocations. Dislocations are line
defects within the crystal structure which were first hypothesised to act as
carriers of plastic flow in \cite{Orowan34,Polanyi34,Taylor34}, and later
experimentally observed in \cite{Bollmann56,HirschHornWhelan56}.

As they move through a crystal, dislocations interact with themselves
and other defects via the orientation-dependent stress fields they
induce \cite{HirthLothe}. This leads to complex coupled
behaviour, and efforts to create accurate mathematical models to
describe their motion and interaction, and so better engineer such
materials are ongoing (see for example \cite{BulatovCai,DislDyn}).

Over the last decade, a body of mathematical analysis of
dislocation models has begun to develop which aims to derive models
of crystal plasticity in a consistent way from models of dislocation
motion and energetics. Broadly, this work starts from either atomistic
models, as in \cite{ADLGP13,EHIM09,Ponsiglione07,ArizaOrtiz05}, or
`semidiscrete' models, where dislocations are lines or points in an
elastic continuum, as in \cite{GPPS12,MP12,SZ10,GLP10,VCOA07,GM06,GM05}.

In the present work we focus on the analysis of dislocations at the
atomistic level and therefore briefly recount recent achievements in
this area. In \cite{EHIM09} the focus is the derivation of homogenised
dynamical equations for dislocations and dislocation densities from a
generalisation of the Frenkel--Kontorova model for edge
dislocations. In \cite{ArizaOrtiz05} a clear mathematical framework
for describing the Burgers vector of dislocations in lattices was
developed, and the asymptotic form of a discrete energy is given in
the regime where dislocations are far from each other relative to the
lattice spacing.  In \cite{Ponsiglione07} a rigorous asymptotic
description of the energy in a finite crystal undergoing anti--plane
deformation with screw dislocations present is developed.
\cite{ADLGP13} follows in the same vein, broadening the class of
models considered, and also treating the asymptotics of a minimising
movement of the dislocation energy. In a similar anti--plane setting,
but in an infinite crystal, \cite{HudsonOrtner13} demonstrated that
there are globally stable states with unit Burgers vector.

In the present contribution we demonstrate the existence of
\emph{locally} stable states containing multiple dislocations with
arbitrary combinations of Burgers vector. Once more, our analysis
concerns crystals under anti--plane deformation, but in addition to the
full lattice, we now consider finite convex domains with
boundaries. Recent results contained in \cite{ADLGP13} also address
the question of local stability of dislocation configurations in
finite domains, but under different assumptions to those employed
here, and using a different set of analytical techniques. In
particular, our analysis employs discrete regularity results which
enable us to provide quantitative estimates on the equilibrium
configurations, while previous results only provide estimates on the
energies.

\subsection{Outline}
The setting for our results is similar to that described in
\cite{HudsonOrtner13}: our starting point is the energy difference
functional
\begin{equation*}
  E^\Om(y;\tilde{y}):=\sum_{b\in\BOm}\b[\psi(Dy_b)-\psi(D\tilde{y}_b)\b],
\end{equation*}
where $\Om \subset \L$ is a subset of a Bravais lattice, $\BOm$ is a
set of pairs of interacting (lines of) atoms, $Dy_b$ is a finite
difference, and $\psi$ is a 1-periodic potential.

We call a deformation $y$ a \emph{locally stable equilibrium} if $u=0$
minimises $E^\Om(y+u;y)$ among all perturbations $u$ which have finite energy,
and are sufficiently small in the energy norm.
The key assumption upon which we base our analysis is the existence of a
local equilibrium in the homogeneous infinite lattice containing a
dislocation which satisfies a condition which we term \emph{strong stability}
--- this notion is made precise in \S\ref{sec:stab_assump}.

Under this key assumption, our main result is Theorem
\ref{th:full_latt}.  This states that, given a number of positive and
negative screw dislocations, there exist locally stable equilibria
containing these dislocations in a given domain as long as the core
positions satisfy a minimum separation criterion from each other and
from the boundary of the domain. Furthermore, these configurations may
only be globally stable if there is one dislocation in an infinite
lattice.

The proof of Theorem \ref{th:full_latt} is divided into two cases, that in 
which $\Om=\L$, and that in which $\Om$ is a finite convex lattice polygon:
these are proved in \S\ref{sec:inf} and \S\ref{sec:fin}
respectively.

\section{Preliminaries}
\label{sec:prelims}
\subsection{The lattice}
\label{sec:latt_compl}
Underlying the results presented in this paper is the structure of the
triangular lattice
\begin{equation}
  \L:=\smfrac{a_1+a_2}{3}+[a_1,a_2]\cdot\Z^2,\quad\text{where }
   a_1=(1,0)^T\text{ and }a_2=\b(\smfrac12,\smfrac{\sqrt3}{2}\b)^T.
   \label{eq:latt_defn}
\end{equation}
In this section we detail the main geometric and topological
definitions we use to conduct the analysis.

\subsubsection{The lattice complex}
For the purposes of providing a clear definition of the notion of Burgers
vector in the lattice, we describe the construction of a CW
complex\footnote{For further details on the definition of a CW complex and
other aspects of algebraic topology, see for example \cite{Hatcher}.} for a
general lattice subset. Recall from \cite{ArizaOrtiz05} that we may define a
lattice complex in 2D by first defining a set of lattice points (or
0--cells), $\L$, then defining the bonds (or 1--cells), $\Bonds$, and finally
the cells (or 2--cells) $\Cells$, and the corresponding boundary operators, 
$\partial$. Throughout the paper, $\L$, $\Bonds$ and $\Cells$ will refer to
the lattice complex generated by $\L$ as defined in \eqref{eq:latt_defn} ---
see also \cite{HudsonOrtner13} for further details of this construction.
Here, we also consider subcomplexes generated by subsets $\Om\subset\L$.

Given $\Om\subseteq\L$, we define the corresponding sets of bonds and cells to
be
\begin{align*}
  \BOm :=\b\{(\xi,\zeta)\in\Bonds\bsep \xi,\zeta\in\Om\b\} \quad
  \text{and} \quad
  \COm :=\b\{(\xi,\zeta,\eta)\in\Cells\bsep \xi,\zeta,\eta\in\Om\b\}.
\end{align*}
It is straightforward to check that this satisfies the definitions of a 
CW subcomplex of
the full lattice complex presented in \cite[\S2.3]{HudsonOrtner13},
and so we may make use of the definitions of integration and $p$-forms as
given in \cite[\S3]{ArizaOrtiz05} restricted to this subcomplex. To keep
notation concise, we will frequently write
\begin{equation}
  f_b:=f(b)\qquad\text{when}\quad f:\Bonds\to\R\quad\text{is a 1--form.}
\end{equation}

We note that we have chosen to define $\L$ such that $0\in\R^2$ lies at the
barycentre of a cell which we will denote $C_0$, and more generally we will
use the notation $x^C\in\R^2$ to refer to the barycentre of $C\in\Cells$.

\subsubsection{Lattice symmetries}
\label{sec:latt_auto}
The triangular lattice is a highly symmetric structure, and all of its
rotational symmetries can be described in terms of multiples of positive
rotations by $\pi/3$ about various points in $\R^2$. We therefore fix $\mR_6$
to be the corresponding linear transformation.

We define two special classes of affine transformations on $\R^2$ which are
automorphisms of $\L$,
\begin{align*}
  G^C(x)&:=\mR_6^i(x-x^C)=0\quad\text{where }i\in\{0,1\}
    \text{ is chosen so that }G^C(\L)=\L,\\
  H^C(x)&:=\b(G^C\b)^{-1}(x)=\mR_6^{-i}x+x^C,
\end{align*}
and note that by definition, $G^C(C)=C_0$ and $H^C(C_0)=C$.
We also understand $G^C,H^C$ as automorphisms on $\Bonds$ and $\Cells$ in the
following way: if $b=(\xi,\zeta)\in\Bonds$ and $C'=(\xi,\zeta,\eta)\in\Cells$,
then
\begin{equation*}
  G^C(b):=\b(G^C(\xi),G^C(\zeta)\b),\quad\text{and}\quad
  G^C(C'):=\b(G^C(\xi),G^C(\zeta),G^C(\eta)\b).
\end{equation*}
Later, it will be important to consider the transformation of 1--forms
under such automorphisms, and so we write
\begin{equation*}
  \b(f\circ G^C\b)_b:=f\b(G^C(b)\b)\qquad\text{when}\quad 
     f:\Bonds\to\R\quad\text{is a 1--form.}
\end{equation*}

\subsubsection{Nearest neighbours}
We define the set of nearest neighbour directions by
\begin{equation*}
  \Rg:=\{a_i\in\R^2\sep i\in\Z\},\quad\text{where}\quad a_i := \mR_6^{i-1}a_1.
\end{equation*}
Given $\Om\subseteq\L$ and $\xi\in\Om$, we define the nearest neighbour
directions of $\xi$ in $\Om$ to be
\begin{equation*}
  \Rg_\xi^\Om:=\b\{a_i\in\Rg\bsep\xi+a_i\in\Om\}\subseteq\Rg.
\end{equation*}

\subsubsection{Distance}
To describe the distance between
elements in the complex, we use the usual notion of Euclidean distance of sets,
\begin{equation*}
  \dist(A,B):=\inf\b\{|x-y|\bsep x\in A,\,y\in B\b\}.
\end{equation*}

\subsection{Convex crystal domains}
\label{sec:conv_crystals}
In addition to studying dislocations in the infinite lattice $\L$, we
will also consider dislocations in a {\em convex lattice polygon}: We
say that $\Om \subset \L$ is a {\em convex lattice polygon} if
\begin{displaymath}
  C_0 \in \COm, \quad \conv(\Om)\cap\L=\Om,
  \quad \text{and} \quad \Om \text{ is finite.}
\end{displaymath}
Here and throughout the paper, $\conv(U)$ means the closed convex hull of
$U\subset\R^2$, and $\Om$ will denote either a convex lattice polygon or
$\L$ unless stated otherwise. For a convex lattice polygon, we define
corresponding `continuum' domains
\begin{equation*}
  U^\Om :=\conv(\Om)\qquad\text{and}\qquad W^\Om:=\mathrm{clos}\B(\bigcup\b\{C\in\COm\bsep C\text{ positively--oriented}\b\}\B).
\end{equation*}
We note that $\Om\subset W^\Om\subseteq U^\Om$; for an illustration of an
example of these definitions, see Figure \ref{fig:conv_poly}.

\subsubsection{Boundary and boundary index}
\label{sec:bdry_index}
%
%
We note that the positively--oriented boundary $\partial W^\Om$ may be
decomposed as
\begin{equation*}
  \partial W^\Om = \b\{\xi\in\Om\sep \Rg_\xi^\Om\neq\Rg\b\}\cup\B\{b\in\BOm
   \Bsep b\in\partial\sum\{C\in\COm\sep C\text{ positively--oriented}\}\B\};
\end{equation*}
in other words, into the lattice points which do not have a full set of
nearest neighbours, and hence lie on the `edge' of the set $\Om$, and into
the bonds which follow the positively--oriented boundary of the entire
subcomplex within the full lattice. Since it will be necessary to sum over
these sets later, we write $\xi\in\partial W^\Om$ to mean
$\xi\in\partial W^\Om\cap\Om$, and $b\in\partial W^\Om$ to mean
$b\in\partial C\cap\partial W^\Om$ for some positively--oriented $C\in\COm$.

It is clear that since $\Om$ is a finite set, $U^\Om$ is a convex polygonal
domain in $\R^2$, and $\partial U^\Om$ is made up of finitely--many straight
segments.
We number the corners of such polygons according to the positive orientation
of $\partial U^\Om$ as $\kappa_m$, $m= 1, \dots, M$, and $\kappa_0:=\kappa_M$;
evidently, $\kappa_m\in\Om$ for all $m$. We further set
$\Gamma_m:=(\kappa_{m-1},\kappa_m)\subset\R^2$, the straight segments of
the boundary.

For each $m$, $\kappa_m-\kappa_{m-1}$ is a lattice direction. Since any pair
$a_i,a_{i+1}$ with $i\in\Z$ form a basis for the lattice directions, there
exists $i$ such that
\begin{equation*}
  \kappa_m-\kappa_{m-1} = j' a_i + k' a_{i+1},\quad\text{where}\quad j',
  k' \in \N, j' > 0.
\end{equation*}
Define the \emph{lattice tangent vector} to $\Gamma_m$ to be
\begin{equation*}
  \tau_m:=j a_i+ k a_{i+1},\quad\text{where }\gcd(j,k)=1\quad\text{and}\quad
    \kappa_m-\kappa_{m-1} = n \tau_m\quad\text{for some }n \in\N.
\end{equation*}
This definition entails that $\tau_m$ is irreducible in the sense that no
lattice direction with smaller norm is parallel to $\tau_m$, and hence if
$\zeta\in\Gamma_m\cap\partial W^\Om$, $\zeta=\kappa_m+j\tau_m$ for some
$j=\{0,\ldots J_m\}$. In addition to the decomposition of $\partial W^\Om$
into lattice points and bonds, we may also decompose into `periods'
$P_\zeta$ indexed by $\zeta\in\partial W^\Om\cap\partial U^\Om$, so
\begin{equation}
  \partial W^\Om = \bigcup_{m=1}^M\bigcup_{j=0}^{J_m}P_{\kappa_m+j\tau_m}
  \qquad\text{where}\quad P_{\zeta}:=\b\{x\in\partial W^\Om\bsep (x-\zeta)\cdot\tau_m\in\b[0,|\tau_m|^2\b]\b\}.\label{eq:Pzeta_defn}
\end{equation}
An illustration of the definition of $P_\zeta$ may be found on the 
right--hand side of Figure \ref{fig:conv_poly}.
Denoting the 1--dimensional Hausdorff measure as $\mathcal{H}^1$, we define
the {\em index} of $\Gamma_m$, and of $\partial W^\Om$ respectively, to be
\begin{equation}
  \ind(\Gamma_m) := \mathcal{H}^1(P_\zeta)\quad\text{for any }
    \zeta\in\Gamma_m\cap\partial W^\Om \quad \text{and}\quad
  \ind(\partial W^\Omega):= \max_{m = 1, \dots, M} \ind(\Gamma_m).
  \label{eq:ind_defn}
\end{equation}

\begin{figure}
  \includegraphics[width=0.43\textwidth]{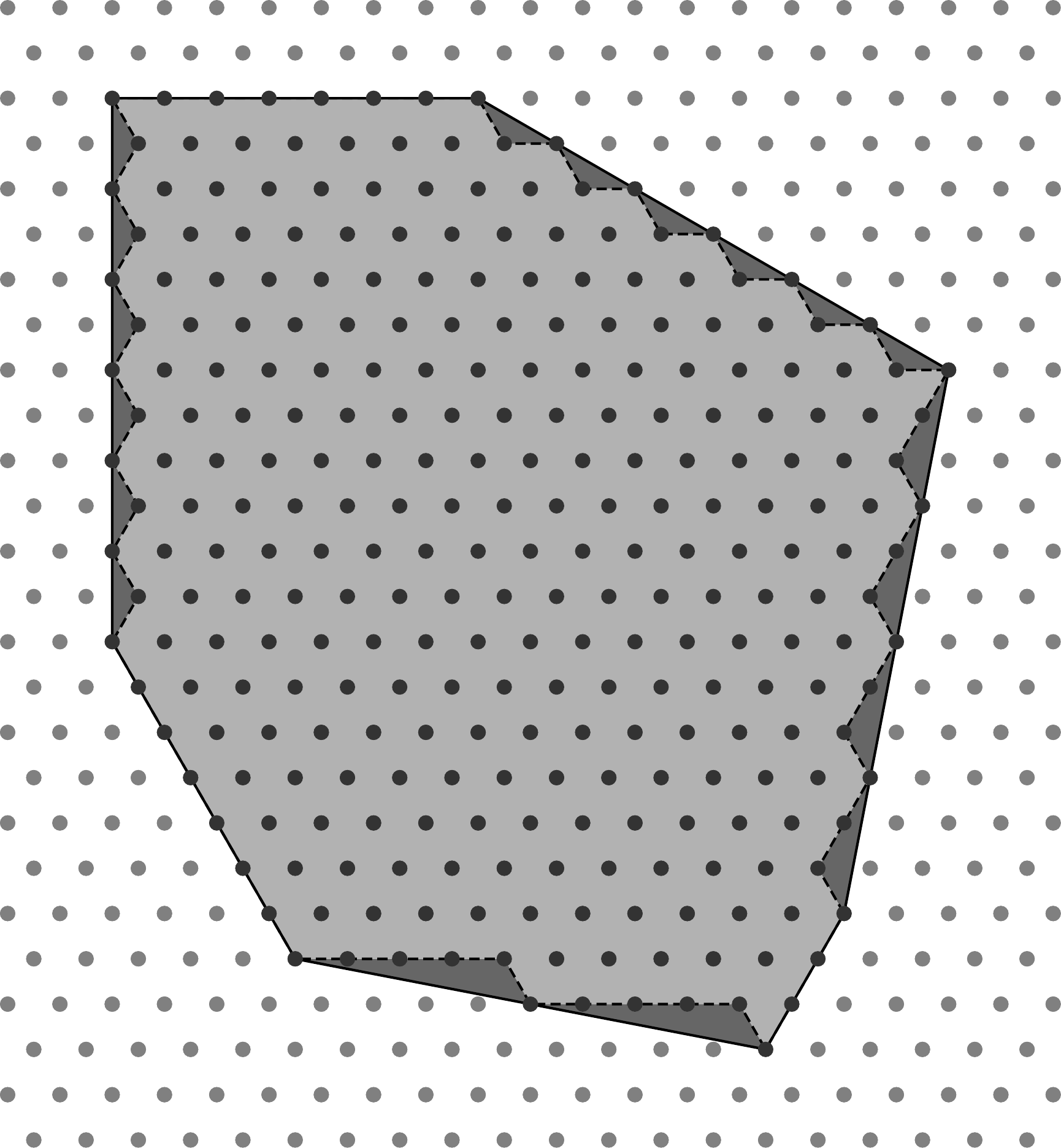}
  \hspace{0.03\textwidth}
  \includegraphics[width=0.43\textwidth]{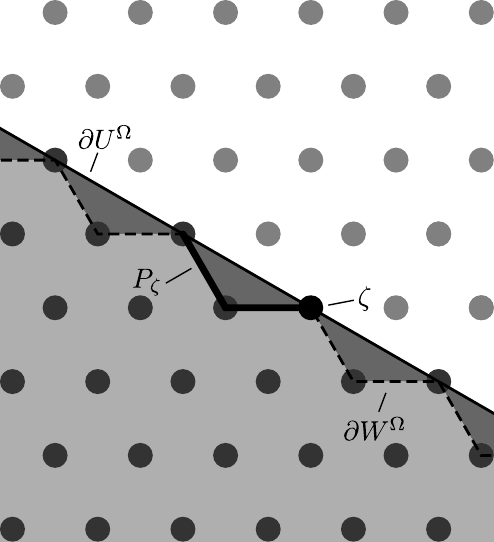}
  \caption{The figure on the left shows an example of a convex lattice 
  polygon. Here, $\Om$ is the set of dark grey points, $W^\Om$ is the light
  grey region and the dark grey region corresponds to $U^\Om\setminus W^\Om$.
  The boundaries of $W^\Om$ and $U^\Om$ are denoted by dashed and plain lines
  respectively.
  The figure on the right illustrates the definition of $P_\zeta$, clearly
  showing the periodic structure of $\partial W^\Om$.}
  \label{fig:conv_poly}
\end{figure}

\subsection{Deformations and Burgers vector}
The positions of deformed atoms will be described by maps
$y\in\Us(\Om):=\{y:\Om\to\R\}$. For $y\in\Us(\Om)$ and
$b=(\xi,\eta)\in\BOm$, we define the finite difference
\begin{equation*}
  Dy_b = y(\eta)-y(\xi).
\end{equation*}

\subsubsection{Function spaces}
In addition to the space $\Us(\Om)$, we define
\begin{align*}
  \Usz(\Om)&:=\b\{v\in\Us(\Om)\bsep v(\xi_0)=0\text{ and }
    \supp(Dv)\text{ is bounded.}\b\},\\
  \Hsi(\Om)&:=\b\{v\in\Us(\Om)\bsep v(\xi_0)=0\text{ and }
    Dv\in\ell^2\b(\Bonds^\Om\b)\b\},
\end{align*}
where $\xi_0=(0,\frac{\sqrt3}{3})^T\in\Om$. It is shown in
\cite[Prop. 9]{OrtSha:interp:2012} that $\Hsi$ is a Hilbert
space and $\Usz \subset \Hsi$ is dense.

\subsubsection{Burgers vector}
We now slightly generalise some key definitions from \cite{HudsonOrtner13}.

Given $y:\Om\to\R$, the set of associated {\em bond length 1-forms} is
defined to be
\begin{equation*}
  [Dy]:=\b\{\alpha:\BOm\to[-\smfrac12,\smfrac12]\bsep \alpha_{-b}=-\alpha_b,
   \,Dy_b-\alpha_b\in\Z\text{ for all }b\in\BOm,\,\alpha_b\in(-\smfrac12,\smfrac12]\text{ if }b\in\partial W^\Om\b\}.
\end{equation*}

A {\em dislocation core} of a bond length 1-form $\alpha$ is a
positively--oriented 2-cell $C\in\COm$ such that $\int_{\partial C}
\alpha \neq 0$.


As remarked in \cite[\S2.5]{HudsonOrtner13}, the Burgers vector of a single
cell may only be $-1$, $0$ or $1$, so we define the set of dislocation cores
to be
\begin{equation*}
  \Cores[\alpha]:=\B\{C\in\COm\Bsep C\text{ positively--oriented, }\int_{\partial C}\alpha=\pm1\B\}.
\end{equation*}
We can now define the {\em net Burgers vector} of a deformation $y$
with $|\Cores[\alpha]| < \infty$ (i.e., a finite number of dislocation
cores) to be
\begin{displaymath}
  B[y] := \sum_{C\in\Cores[\alpha]} \int_{\partial C} \alpha.
\end{displaymath}
If $\Omega$ is a convex lattice polygon, then it is straightforward to
see that $B[y] = \int_{\partial W^\Om}\alpha$, and the requirement that
$\alpha_b\in(-\smfrac12,\smfrac12]$ for $b\in\partial W^\Om$ ensures that
the net Burgers vector is independent of $\alpha\in[Dy]$, since any two
bond length 1-forms agree for all $b\in\partial W^\Om$.




\subsection{Dislocation configurations}
\label{sec:core_config}
In order to prescribe the location of an array of dislocations, we
define a \emph{dislocation configuration} (or simply, a
\emph{configuration}) to be a set $\D$ of ordered pairs
$(C,s)\in\COm\times\{-1,1\}$, satisfying the condition that
\begin{equation}
  (C,s)\in\D\quad\text{implies}\quad(C,-s)\notin\D.\label{eq:D_uniq_bvec}
\end{equation}
Such sets $\D$ should be thought of as a set of dislocation core positions
with accompanying Burgers vector $\pm1$.
We define the minimum separation distance of a configuration to be
\begin{equation*}
  L_\D:=\inf\b\{\dist(C,C')\bsep (C,s),(C',t)\in\D, C\neq C'\},
\end{equation*}
and in the case where $\Om$ is a convex lattice polygon, we define the
minimum separation between the dislocations and the boundary to be
\begin{equation*}
  S_\D:=\inf\b\{\dist(C,\partial W^\Om)\bsep(C,s)\in\D\b\}.
\end{equation*}



\section{Main results}
\label{sec:main_results}

\subsection{Energy difference functional and equilibria}
We assume that lattice sites interact via a $1$--periodic
nearest--neighbour pair potential $\psi \in C^4(\R)$, which is even
about $0$. We discuss possible extensions in \S\ref{sec:discussion}.

For a pair of displacements $y,\tilde{y}\in\Us(\Om)$, we define 
\begin{equation*}
  E^\Om(y;\tilde{y}):=\sum_{b\in\BOm}\psi(Dy_b)-\psi(D\tilde{y}_b),
\end{equation*}
where we will drop the use of the superscript in the case where $\Om=\L$.  We
note immediately that this functional is well-defined whenever
$y-\tilde{y}\in\Usz(\Om)$. It is also clear that Gateaux derivatives
in the first argument (in $\Usz(\Om)$ directions) exist up to fourth
order, and do not depend on the second argument. We denote these derivatives
$\del^j E^\Omega(y)$, so that for $v, w \in \Usz(\Omega)$, we have
\begin{align*}
  \<\del E^\Om(y),v\> :=\sum_{b\in\Bonds^\Om}\psi'(Dy_b)\cdot Dv_b,
  \quad \text{and} \quad 
  \<\ddel E^\Om(y)v,w\> :=\sum_{b\in\Bonds^\Om} \psi''(Dy_b)\cdot Dv_bDw_b.
\end{align*}

In \S\ref{sec:ancil:ediff}, we will demonstrate that under certain
conditions on $\tilde{y}$, $E(y;\tilde{y})$ it may be extended by continuity
in its first argument to a functional which is also well--defined for
$y-\tilde{y}\in\Hsi(\Om)$.

The following definition makes precise the various notions of equilibrium we
will consider below.

\begin{definition}[Stable Equilibrium]
  \label{def:defn_stable_equilib}
  (i) A displacement $y\in\Us(\Om)$ is a {\em locally stable equilibrium} if
  there exists $\epsilon > 0$ such that $E^\Om(y + u; y) \geq 0$ for
  all $u\in\Usz(\Om)$ with $\| D u \|_2 \leq \epsilon$.

  (ii) We call a locally stable equilibrium $y$ {\em strongly stable}
  if, in addition, there exists $\lambda > 0$ such that
  \begin{equation}
    \label{eq:defn_strong_stab}
    \< \ddel E^\Omega(y) v, v \> \geq \lambda \| Dv\|_{\ell^2}^2
    \qquad \forall v \in \Usz(\Om).
  \end{equation}

  (iii) A displacement $y\in\Us(\Om)$ is a {\em globally stable
    equilibrium} if $E^\Om(y + u; y) \geq 0$ for all $u \in
  \Usz(\Om)$.
\end{definition}

\subsection{Strong stability assumption}
\label{sec:stab_assump}
Here, we discuss the key assumption employed in proving the main
results of the paper. As motivation, we review a result from
\cite{HudsonOrtner13}. 

Let $\yh : \R^2 \setminus \{0\} \to \R$ be the dislocation solution
for anti--plane linearised elasticity \cite{HirthLothe}, i.e.
\begin{equation*}
  \yh(x):=\smfrac{1}{2\pi}\arg(x)
    =\smfrac{1}{2\pi}\arctan\b(\smfrac{x_2}{x_1}\b),
\end{equation*}
where we identify $x \in \R^2$ with the point $x_1+i x_2 \in \C$, and the
branch cut required to make this function single--valued is taken along
the positive $x_1$--axis. The accepted intuition is that $\yh$ provides
a good description of dislocation configurations, except in a `core'
region which stores a finite amount of energy. Reasonable candidates for
equilibrium configurations are therefore of the form $y = \yh + u$
where $u \in \Hsi(\Om)$. This intuition is made precise as follows
\cite[Theorem 4.5]{HudsonOrtner13}:

\begin{theorem}[Global stability of single dislocation $\L$]
  \label{th:global}
  In addition to the foregoing assumptions, suppose that $\psi(r) \geq
  \smfrac12\psi''(0) r^2$ for $r\in[-\smfrac12,\smfrac12]$ where
  $\psi''(0) > 0$, then there exists $u\in\Hsi(\L)$ such that $\yh+u$
  is a globally stable equilibrium of $E$.
\end{theorem}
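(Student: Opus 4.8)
The plan is to realise $\yh+u$ as a global \emph{minimiser} of the energy-difference functional and then to read off global stability from the additive (cocycle) structure of $E$. The first observation is that for any three displacements one has $E(y_1;y_3)=E(y_1;y_2)+E(y_2;y_3)$, immediately from the definition. Consequently, if $u\in\Hsi(\L)$ minimises
\[
  \mathcal{J}(v):=E(\yh+v;\yh)\qquad\text{over }v\in\Hsi(\L),
\]
then for every $w\in\Usz(\L)\subseteq\Hsi(\L)$ we get
\[
  E\b(\yh+u+w;\,\yh+u\b)=\mathcal{J}(u+w)-\mathcal{J}(u)\geq 0,
\]
which is exactly global stability of $y=\yh+u$ in the sense of Definition \ref{def:defn_stable_equilib}(iii). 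Thus the theorem reduces to showing that $\mathcal{J}$ attains its infimum on the Hilbert space $\Hsi(\L)$.

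Second, I would make sense of $\mathcal{J}$ on all of $\Hsi(\L)$. Writing $\mathcal{J}(v)=\<\del E(\yh),v\>+\sum_b\b[\psi(D\yh_b+Dv_b)-\psi(D\yh_b)-\psi'(D\yh_b)Dv_b\b]$, the second sum is controlled by $\|Dv\|_{\ell^2}^2$ since $\psi\in C^4$ and $Dv\in\ell^2(\Bonds)$. The delicate term is the residual force $\<\del E(\yh),v\>=\sum_b\psi'(D\yh_b)\,Dv_b$: because $\yh$ carries infinite energy — the strain $D\yh_b$ only decays inverse-linearly in the distance to the core, so $\sum_b(D\yh_b)^2$ diverges logarithmically — this sum is not absolutely convergent. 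Here I would exploit that $\yh$ solves the continuum (linear elasticity) equation: after summation by parts the residual becomes a pairing of $v$ against the discrete consistency error of $\yh$, which decays faster than inverse-square in the distance to the core and, together with $v(\xi_0)=0$ and a discrete Poincar\'e/Sobolev estimate, defines a bounded linear functional on $\Hsi(\L)$. This is precisely the sense in which $E(\cdot;\yh)$ extends by continuity to $\Hsi(\L)$ referred to in \S\ref{sec:ancil:ediff}.

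Third, I would run the direct method. For coercivity I would pass to bond-length $1$-forms: periodicity of $\psi$ upgrades the hypothesis to $\psi(s)\geq\smfrac12\psi''(0)\,\dist(s,\Z)^2$ for all $s$, so on the `convex branch' $[-\smfrac12,\smfrac12]$ the nonlinear energy dominates the quadratic one. Since $Dv\in\ell^2$ forces $Dv_b\to0$, only finitely many bonds carry a strain large enough to cross the periodicity barrier, and their contribution is controlled; combined with $\psi''(0)>0$ and continuity of $\psi''$ (so $\psi''>0$ near $0$, covering the far field), this yields $\mathcal{J}(v)\geq c\|Dv\|_{\ell^2}^2-C\|Dv\|_{\ell^2}-C'$, whence minimising sequences are bounded in $\Hsi(\L)$. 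By weak compactness I extract $Dv_n\rightharpoonup Dv$ in $\ell^2$ (with pointwise convergence $v_n\to v$ from $v_n(\xi_0)=0$); weak lower semicontinuity on the convex branch, together with weak continuity of the residual term, gives $\mathcal{J}(v)\leq\liminf\mathcal{J}(v_n)$, so the infimum is attained at some $u\in\Hsi(\L)$.

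I expect the principal obstacle to be the second step: showing the residual force of the infinite-energy profile $\yh$ is a bounded functional despite the logarithmic divergence, which forces a careful consistency estimate built on the continuum equation satisfied by $\yh$ and on discrete regularity. A subsidiary difficulty, threaded through both coercivity and lower semicontinuity, is the non-convexity of the $1$-periodic potential $\psi$: one must confine the analysis to the branch $[-\smfrac12,\smfrac12]$ where the hypothesis $\psi(r)\geq\smfrac12\psi''(0)r^2$ applies and argue that finite-energy perturbations cannot profitably push strains over the barrier — equivalently, that the net Burgers vector $B[\yh+v]$ is preserved along the minimising sequence.
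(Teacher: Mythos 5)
First, a point of reference: the present paper does not prove Theorem~\ref{th:global} at all --- it is quoted verbatim from \cite{HudsonOrtner13} (their Theorem~4.5) as motivation for the assumption {\bf (STAB)}, so there is no in-paper proof to compare against. Your architecture --- reduce global stability to attainment of the infimum of $v\mapsto E(\yh+v;\yh)$ via the additivity of the energy difference, extend that functional to $\Hsi(\L)$ by showing $\del E(\yh)$ is bounded through a consistency estimate exploiting harmonicity of $\yh$ (this is exactly Lemma~\ref{th:residual_sum_yhat} with $N=1$: the divergence-free subtraction upgrades the non-square-summable $|\psi'(\alpha_b)|\lesssim \dist(b,C)^{-1}$ to $|g_b|\lesssim \dist(b,C)^{-3}$), and then run the direct method --- is the strategy of the cited proof, and the reduction and extension steps are sound.

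The genuine gap is in the coercivity step. The bound $\mathcal{J}(v)\geq c\|Dv\|_{\ell^2}^2-C\|Dv\|_{\ell^2}-C'$ does not follow from the per-bond argument you sketch. Writing $\mathcal{J}(v)=\<\del E(\yh),v\>+\sum_b\b[\psi(\alpha_b+Dv_b)-\psi(\alpha_b)-\psi'(\alpha_b)Dv_b\b]$, a bond on which the strain crosses the periodicity barrier (nearest integer $k_b\neq 0$) contributes a bracket that is only bounded below by $-c\b(|Dv_b|+1\b)$, with $c$ of order $\psi''(0)$; and the number of such bonds is not ``finite and controlled'' uniformly --- it is merely bounded by $O(\|Dv\|_{\ell^2}^2)$, since crossing the barrier away from the core forces $|Dv_b|\geq \smfrac14$. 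Summing, the deficit is of order $\|Dv\|_{\ell^2}^2$ with a constant that is not small relative to $\smfrac12\psi''(0)$, so it can swallow the good quadratic term and the asserted coercivity inequality fails as stated. This is precisely why the hypothesis must be exploited through the bond-length $1$-form structure: one reparametrises the competitor by $\beta\in[D\yh+Dv]$, for which $|\beta_b-\alpha_b|\leq 1$ uniformly (the integer excess of $Dv_b$ is absorbed into the topology, i.e.\ into which cells carry the Burgers vector), obtains coercivity in $\|\beta-\alpha\|_{\ell^2}$, and separately shows that the dislocation content is conserved in the class $\yh+\Hsi(\L)$. You flag this issue in your final paragraph as a ``subsidiary difficulty,'' but it is in fact the central one, and the step ``their contribution is controlled'' is where the proof as written breaks. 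The lower-semicontinuity step, by contrast, is fine: weak $\ell^2$ convergence gives bondwise convergence of $Dv_n$, the bracket is bounded below by a summable function on bounded sets of $\Hsi(\L)$, and Fatou applies; convexity is not needed there.
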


\medskip

In the present work, where we focus on multiple dislocation cores, we
remove the additional technical assumptions on $\psi$ but instead directly
assume the existence of a single stable core; i.e.
\medskip

\begin{description}
\item[{\bf (STAB)}] there exists $u \in \Hsi(\L)$ such that $y=\yh+u$
  is a strongly stable equilibrium.
\end{description}
\medskip

Throughout the rest of the paper, $u$ is fixed to satisfy {\bf
  (STAB)}. We denote $\lambda_d := \lambda$ to be the stability
constant from (\ref{eq:defn_strong_stab}) with $y = \yh+u$, and
fix a finite collection of cells, $A$, such that
$\Cores[\alpha] \subset A$ for any $\alpha\in[D\yh+Du]$.

\begin{remark}
  To demonstrate that {\bf (STAB)} holds for a non-trivial class of
  potentials $\psi$ satisfying our assumptions, we refer to Lemma 3.3
  in \cite{HudsonOrtner13}, which states that, if $\psi=\psilin$,
  where
  \begin{equation*}
    \psilin(x):=\smfrac12\lambda\,\dist(x,\Z)^2,
  \end{equation*}
  then $\ddel E > 0$ at $y=\yh + u$, a globally stable
  equilibrium. (Theorem \ref{th:global} does not in fact require
  global smoothness of $\psi$.) Furthermore, it immediately follows
  that $\dist(Dy_b,\smfrac12+\Z)\geq \eps_0$ for some $\eps_0>0$.

  Using the inverse function theorem as stated below in Lemma
  \ref{th:inv_func_theorem}, it is fairly straightforward to see that
  if $\psi\in\CC^4(\R)$ satisfies
  \begin{equation*}
    \b| \psi^{(j)}(r) - \psilin^{(j)}(r) \b| \leq \epsilon |r|^{p-j}
    \text{ for } r \in [-1/2, 1/2] \text{ and } j = 1, 2,
  \end{equation*}
  where $p > 2$ and $\epsilon$ is sufficiently small, then there
  exists $w \in \Hsi(\L)$ with $\|Dw \|_{\ell^2} \leq C \epsilon$ such
  that $\yh + u + w$ is a strongly stable equilibrium for $E$.

  Potentials constructed in this way are by no means the only
  possibilities --- {\bf (STAB)} can in fact be checked for any given
  potential by way of a numerical calculation, using for example the
  methods analysed in \cite{EOS13}.
  
  We also remark here that in \S5 of \cite{ADLGP13}, a demonstration of
  the existence of local minimisers is given under different assumptions.
  Instead of {\bf (STAB)}, structural assumptions are made on the
  potential, which the example we provide here also satisfies. Under these
  assumptions, Lemma 5.1 in \cite{ADLGP13} demonstrates that there exist
  energy barriers which dislocations must overcome in order to move from
  cell to cell. This leads to the proof of Theorem 5.5, which includes the
  statement that there exist local minimisers containing dislocations in
  finite lattices.
\end{remark}



\subsection{Existence Results}
\label{sec:results_inf}
We state the existence result for stable dislocation configurations in
the infinite lattice and in convex lattice polygons together. To this
end, we denote $S_\D := +\infty$ for the case $\Om = \L$.

We remark here that the main achievement of this analysis is to show the
constants $L_0$ and $S_0$ depend only on the number of dislocations, the
potential $\psi$ and $\ind(\partial W^\Om)$, and not on the specific
domain $\Om$ or its diameter.

\begin{theorem}
  \label{th:full_latt}
  Suppose that {\bf (STAB)} holds and either $\Om = \L$ or $\Om$ is a convex
  lattice polygon.

  (1) For each $N \in \N$, there exist constants $L_0=L_0(N)$ and
  $S_0=S_0\b(\ind(\partial W^\Om),N\b)$ such that for any core configuration
  $\D$ satisfying $|\D| = N$, $L_\D \geq L_0$ and $S_\D \geq S_0$,
  there exists a strongly stable equilibrium $z \in \Us(\Om)$, and for
  any $\alpha\in[Dz]$,
  \begin{equation*}
    \Cores[\alpha] \subset \bigcup_{(C,s)\in\D} H^C(A) \qquad
    \text{and}\qquad \int_{\partial H^C(A)} \alpha = s,\,
    \text{ for all }(C,s)\in\D.
  \end{equation*}
  In particular, $B[z] = \sum_{(C,s)\in\D} s$, and the conditions
  $L_\D \geq L_0$ and $S_\D \geq S_0$ entail that core regions $x^C + A$ do
  not overlap each other, or with the boundary.

  (2) The equilibrium $z$ can be written as 
  \begin{displaymath}
    z = \sum_{(C, s) \in \D} s (\yh + u) \circ G^C + w,
  \end{displaymath}
  where $w \in \Hsi(\Om)$ and $\| Dw \|_{\ell^2} \leq c
  (L_\D^{-1} + S_\D^{-1/2})$, where $c$ is a constant depending only on $N$
  and $\ind(\partial W^\Om)$.


  (3) Unless $N = 1$ and $\Om = \L$, $z$ cannot be a globally
  stable equilibrium.
\end{theorem}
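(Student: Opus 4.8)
The plan is a predictor--corrector scheme driven by the quantitative inverse function theorem (Lemma~\ref{th:inv_func_theorem}). Taking the single strongly stable core furnished by \textbf{(STAB)}, I would use as predictor the linear superposition
\begin{equation*}
  z_0 := \sum_{(C,s)\in\D} s\,(\yh + u)\circ G^C,
\end{equation*}
which plants a copy of the strongly stable single--dislocation solution, with the prescribed sign, at the barycentre of each cell of $\D$. By the lattice symmetry each summand $\phi_C := (\yh+u)\circ G^C$ is itself an equilibrium of the full--lattice energy $E$. The aim is to produce a corrector $w\in\Hsi(\Om)$ so that $z = z_0 + w$ is an exact, strongly stable equilibrium. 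The inverse function theorem reduces this to two estimates: (a) the residual $\del E^\Om(z_0)$ is small in the dual norm, at the rate $L_\D^{-1}+S_\D^{-1/2}$ asserted in part~(2); and (b) the Hessian $\ddel E^\Om$ is uniformly coercive near $z_0$, with constant depending only on $\ld$, $N$ and $\ind(\partial W^\Om)$.

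For the residual I would test $\langle \del E^\Om(z_0),v\rangle$ against $v\in\Usz(\Om)$ and use that each $\phi_C$ is an equilibrium together with the oddness of $\psi'$ (so the sign $s$ passes through $\psi'$); the single--core contributions then cancel, leaving (i) the nonlinear interaction defect $\psi'(\sum_C s_C D\phi_{C,b})-\sum_C s_C\psi'(D\phi_{C,b})$ tested against $Dv_b$, and (ii) on a finite polygon the uncancelled terms from restricting the whole--lattice equilibrium relation to $\BOm$. Using the decay $|\nabla\yh(x)|\sim|x|^{-1}$ (and the faster decay of $Du$), the interaction defect near any core is $O(L_\D^{-1})$, since there the fields of the remaining cores are smooth and of size $O(L_\D^{-1})$; an $\ell^2$ sum over bonds gives the $O(L_\D^{-1})$ contribution. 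The boundary terms I would bound by a discrete trace estimate on the boundary strip, yielding $O(S_\D^{-1/2})$; obtaining a constant depending on $\ind(\partial W^\Om)$ rather than on the diameter is delicate here, and would rely on the local, period--by--period structure of $\partial W^\Om$ recorded in \eqref{eq:Pzeta_defn}--\eqref{eq:ind_defn}.

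For coercivity I would use a partition of unity separating $O(1)$ neighbourhoods of the individual cores from the far field. On each core neighbourhood the Hessian is a small perturbation of $\ddel E$ at a single strongly stable core, so \textbf{(STAB)} supplies coercivity with constant $\ld$; on the far field $\psi''(Dz_{0,b})$ is close to $\psi''(0)>0$. Patching these requires controlling the commutators between finite differences and the cutoffs, which carry factors $O(1/\mathrm{width})$ and are absorbed once $L_\D$ and $S_\D$ are large, leaving a uniform constant $\lambda'>0$. I expect this uniform coercivity to be the main obstacle, since the estimate must survive the discreteness (the finite--difference commutators do not vanish) and the passage to arbitrarily large domains with only $\ind(\partial W^\Om)$ entering. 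Granting (a) and (b), Lemma~\ref{th:inv_func_theorem} yields $w$ with $\|Dw\|_{\ell^2}\le c(L_\D^{-1}+S_\D^{-1/2})$, which is part~(2); strong stability of $z$ then follows from the Lipschitz continuity of $\ddel E^\Om$ (as $\psi\in\CC^4$) and the smallness of $w$. For part~(1), smallness of $\|Dw\|_{\ell^2}$ keeps $Dz_b$ uniformly away from $\smfrac12+\Z$ on bonds outside the core regions (using the gap $\eps_0$ of the Remark), so no new cores appear and the integrals $\int_{\partial H^C(A)}\alpha=s$ are preserved.

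For part~(3) it suffices to exhibit one $u\in\Usz(\Om)$ with $E^\Om(z+u;z)<0$. When $N\ge 2$ I would transport a single core by a compactly supported $u$ whose difference $Du$ runs along a lattice path between the old and the new cell, choosing the direction so as to lower the logarithmic interaction energy: spreading two like--signed cores apart, or driving two opposite--signed cores together to annihilate; on a finite polygon a core may alternatively be carried into $\partial W^\Om$ and absorbed. When $N=1$ on a finite polygon the same boundary transport lowers the energy through the attractive image force of the free surface. Since part~(2) places $z$ within $O(L_\D^{-1}+S_\D^{-1/2})$ of $z_0$, the leading change in energy is governed by the classical linear--elasticity interaction (or image) computation and is strictly negative. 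The one excluded case, $N=1$ and $\Om=\L$, is exactly where Theorem~\ref{th:global} provides a globally stable equilibrium. The subtlety is to realise core transport as a genuine finite--energy element of $\Usz(\Om)$ and to quantify the strict decrease against the nonlinear and corrector errors.
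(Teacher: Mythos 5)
Your skeleton (predictor, residual bound, uniform coercivity, quantitative inverse function theorem, explicit counterexamples for (3)) is the same as the paper's, but two of your key steps have genuine gaps. The most serious is in the finite-polygon case: you never introduce the continuum boundary corrector $\ycorr$ solving $-\Delta\ycorr = 0$ in $U^\Om$ with $\nabla\ycorr\cdot\nu = -\sum_{(C,s)\in\D} s\,\nabla(\yh\circ G^C)\cdot\nu$ on $\partial U^\Om$, i.e.\ \eqref{eq:y_corr_prob}, which is the central new construction of \S\ref{sec:fin}. It is not optional. To get a residual bound uniform in the domain one must subtract a divergence-free component from the raw stress $\psi'(\alpha_b)$ (which only decays like $\dist(b,C)^{-1}$ and is not square-summable, so direct Cauchy--Schwarz costs a factor growing with $\diam(U^\Om)$); in a finite domain the divergence theorem then produces the boundary term $\int_{\partial W^\Om} Iv\,\nabla z\cdot\nu\,{\rm d}s$, which involves point values of $v$ itself, and in two dimensions these are \emph{not} controlled by $\|Dv\|_{\ell^2}$ uniformly in the diameter. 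The paper's period-by-period integration by parts (Lemma \ref{th:residual_sum_ycorr}) converts this term into $\sum_b \Sigma_b\,Dv_b$ only because $\int_{P_\zeta}\nabla z\cdot\nu\,{\rm d}s = 0$ on each period $P_\zeta$, and that vanishing is precisely what $\ycorr$ enforces; the per-period means of $\nabla\yh\cdot\nu$ alone are genuinely nonzero, so your proposed ``discrete trace estimate on the boundary strip'' has no mechanism to deliver a domain-independent $S_\D^{-1/2}$. The dependence on $\ind(\partial W^\Om)$ and the rate $S_\D^{-1/2}$ come from this construction together with the regularity bounds \eqref{eq:Dycorr_bnd}--\eqref{eq:D2ycorr_bnd}, proved by identifying $\yh+\ycorr$ as a harmonic conjugate of the Green's function (via Grisvard and Fromm), machinery your outline lacks. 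Relatedly, with the untruncated corrector $u$ your remark that ``each summand is itself an equilibrium'' fails on $\Om$: the restriction of a full-lattice equilibrium to $\BOm$ is not an equilibrium of $E^\Om$, which is why the paper works with $\Pi_R u$ and localised supports.

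The second gap is your coercivity argument. A partition of unity with cutoff-commutator errors ``of order one over the width, absorbed for large $L_\D,S_\D$'' does not work here: in two dimensions there is no discrete Hardy inequality, so the commutator terms involve $\ell^2$-norms of $v$ on annuli that cannot be bounded by a small multiple of $\|Dv\|_{\ell^2}$; even the mean-subtracting truncation operators satisfy only \eqref{eq:trunc_est:general} with a fixed $O(1)$ constant $\gamma_1$ multiplying the far-field tail of $Dv$, which for an arbitrary test function is comparable to $\|Dv\|_{\ell^2}$. This is exactly why the paper does not argue directly but by contradiction along sequences of configurations (Lemmas \ref{th:full_latt_stab} and \ref{th:fin_latt_stab}), invoking \cite[Lemma 4.9]{EOS13} to select radii $r^n\to\infty$ adapted to the test-function sequence so that the localised pieces converge strongly, the remainders converge weakly to zero, and the cross-terms $\<\ddel E(z^n)w^{n,0},w^{n,i}\>$ vanish only \emph{asymptotically} through weak convergence --- they are $o(1)$, not $O(R^{-1})$. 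Your part (3) sketch is in the right spirit but incomplete in the same way the paper's details matter: for $B[z]=0$ the paper annihilates all dipoles at once with the explicit finite-energy field \eqref{eq:dip_constr} and compares with $0$ via Lemma \ref{th:0_glob_stab}, and for $|B[z]|>1$ it transports a core through an \emph{infinite} sequence of hops, each gaining a uniform amount by a Jensen/logarithm estimate, since a single finite transport need not beat the $O(1)$ lattice and core errors that your ``leading linear-elasticity computation'' ignores.
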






\subsubsection{Strategy of the proof}
\label{sec:proof_outline}
In both cases, the overall strategy of proof is similar:
\begin{itemize}
\item We construct an approximate equilibrium $z$, using the linear
  elasticity solution for the dislocation configuration and a truncated
  version of the core corrector whose existence we assumed in {\bf (STAB)}.
\item For given $\D$, we obtain bounds which demonstrate that
  $\del E^\Om(z)$ decays to zero as $L_\D, S_\D \to \infty$, where $z$ are
  the approximate equilibria corresponding to $\D$.
\item We show that $\ddel E^\Om(z) \geq \ld -\eps$ as $L_\D$,
  $S_\D\to\infty$.
\item We apply the Inverse Function Theorem to demonstrate the
  existence of a corrector $w \in \Hsi(\Om)$ such that $z+w$ is a
  strongly stable equilibrium. Since $\| Dw \|_{\ell^2}$ can be made
  arbitrarily small by making more stringent requirements on $\D$, we can
  demonstrate that the condition on the core position holds, which completes
  the proof of parts (1) and (2) of the statement.
\item Part (3) of the statement is proved by construction of
  explicit counterexamples.
\end{itemize}

\begin{remark}
  \label{rem:s12}
  The reduced rate $S_\D^{-1/2}$ (as opposed to $L_\D^{-1}$) with
  respect to separation from the boundary is due to surface stresses
  which are not captured by the standard linear elasticity theory that
  we use to construct the predictor $z$.

  By formulating a half-plane problem where
  $\partial W^\Om$ is not identical to $\partial U^\Om$,
  one may readily check that the bound is in general sharp. 

  If ${\rm index}(\partial W^\Om) = 1$, then it may be possible to
  prove that the rate should be bounded by $S_\D^{-1}$. Otherwise, it
  is necessary to add an additional boundary correction to the
  predictor which captures these surface stresses. All of these routes
  seem to require improved regularity estimates of the boundary
  corrector $\ycorr$ that we construct in~\eqref{eq:y_corr_prob}.
  %
\end{remark}

\subsubsection{Possible extensions}
\label{sec:discussion}
We have avoided the most difficult aspect of the analysis of
dislocations by imposing the strong stability assumption {\bf
(STAB)} for a single core. Once this is established (or assumed),
several extensions of our analysis become possible, which we disuss
in the following paragraphs.

{\it Symmetry of $\psi$: } An immediate extension is to drop the
requirement that $\psi$ is even about $0$, which would be the case if
the body was undergoing macroscopic shear. This extension would
require us to separately assume the existence of strongly stable
positive and negative dislocation cores in the full lattice, as they
would no longer necessarily be symmetric. Apart from the introduction
of logarithmic factors into some of the bounds we obtain, it appears
that the analysis would be analogous to that contained here.

{\it General domains: } It is straightforward to generalise the
analysis carried out in \S\ref{sec:fin} to `half--plane' lattices,
since the linear elastic corrector $\ycorr$ may be explicitly
constructed via a reflection principle.  This suggests that in fact
the analysis could be extended to hold in any convex domain with a
finite number of corners $\kappa_m\in\Om$ and tangent vectors $\tau_m$
which are lattice directions --- in effect, an `infinite' polygon. The
key technical ingredient required here would be to prove decay results
for the corrector problem analysed in \S\ref{sec:fin:corrector} in
such domains, which we were not able to find in the literature. It is
unclear to us, though, to what extent extensions to non-convex domains
are feasible.

{\it Interaction potential: } The assumption that interactions are
governed by nearest--neighbour pair potentials only is easily lifted
as well. A generalisation to many--body interactions with a finite
range beyond nearest neighbours is conceptually straightforward
(though would add some technical, and in particular notational
difficulties), as long as suitable symmetry assumptions are placed on
the many-body site potential. Note, in particular, that the crucial
decay estimates from \cite{EOS13} that we employ are still valid in
this case.


{\it In-plane models: } Generalisations to in--plane models seem to be
relatively straightforward only in the infinite-lattice case. In the
finite domain case, one would need to account for surface relaxation
effects, which we have entirely avoided here by choosing an
anti--plane model (however, see Remark \ref{rem:s12}. The phenomenon
of surface relaxation in discrete problems seems a difficult one, and
to the authors' knowledge, has yet to be addressed systematically in
the Applied Analysis literature, but for some results in this
direction, see \cite{Theil11}. A possible way forward would be to
impose an additional stability condition on the boundary, similar to
our condition {\bf (STAB)}, which could then be investigated
separately.

\section{Ancillary results}

\subsection{Extension of the energy difference functional}
\label{sec:ancil:ediff}
%
The following is a slight variation of \cite[Lemma 4.1]{HudsonOrtner13}.

\begin{lemma}
  Let $y\in\Us(\Om)$, and suppose that $\del E^\Om(y)$ is a bounded
  linear functional. Then $u\mapsto E^\Om(y+u;y)$ is continuous as a
  map from $\Usz(\Om)$ to $\R$ with respect to the norm
  $\|D\cdot\|_2$; hence there exists a unique continuous extension of
  $u \mapsto E^\Om(y+u;y)$ to a map defined on $\Hsi(\Om)$. The
  extended functional $u \mapsto E^\Om(y+u; y)$, $u \in \Hsi(\Om)$ is
  three time continuously Frechet differentiable.
\end{lemma}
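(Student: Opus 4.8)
The plan is to reduce everything to the study of a single remainder functional whose defining sum converges absolutely on $\Hsi(\Om)$. Since $\psi\in\CC^4(\R)$ is $1$--periodic, each derivative $\psi^{(j)}$, $j=1,\dots,4$, is bounded; write $M_j:=\sup_{\R}|\psi^{(j)}|<\infty$. For $u\in\Usz(\Om)$ I would first record the decomposition
\begin{equation*}
  E^\Om(y+u;y)=\<\del E^\Om(y),u\>+R(u),\qquad
  R(u):=\sum_{b\in\BOm}\b[\psi(Dy_b+Du_b)-\psi(Dy_b)-\psi'(Dy_b)Du_b\b],
\end{equation*}
which is merely a rearrangement of the finite sum defining $E^\Om(y+u;y)$. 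The point of the splitting is that, by Taylor's theorem with the bound $|\psi(a+h)-\psi(a)-\psi'(a)h|\leq\smfrac12 M_2 h^2$, the sum defining $R$ converges absolutely for \emph{every} $u\in\Hsi(\Om)$ and satisfies $|R(u)|\leq\smfrac12 M_2\|Du\|_{\ell^2}^2$, whereas the linear term $\<\del E^\Om(y),u\>$ is continuous on $\Hsi(\Om)$ by the standing hypothesis that $\del E^\Om(y)$ is bounded.

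\emph{Continuity and the extension.} Applying the fundamental theorem of calculus to $R(u)-R(v)$ and using $|\psi'(a)-\psi'(b)|\leq M_2|a-b|$, I would derive a local Lipschitz estimate of the form $|R(u)-R(v)|\leq M_2\b(\|Du\|_{\ell^2}+\|Dv\|_{\ell^2}\b)\|D(u-v)\|_{\ell^2}$ (up to an absolute constant), valid first on $\Usz(\Om)$ and then, by the absolute convergence above, on all of $\Hsi(\Om)$. This shows that $u\mapsto E^\Om(y+u;y)$ is locally Lipschitz, hence continuous, on $\Usz(\Om)$ in the norm $\|D\cdot\|_{\ell^2}$; since $\Usz(\Om)\subset\Hsi(\Om)$ is dense, the same estimate applied to a Cauchy sequence yields the unique continuous extension, which is exactly $\<\del E^\Om(y),\cdot\>+R$.

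\emph{Fréchet differentiability.} It suffices to show $R\in\CC^3(\Hsi(\Om))$, since the linear part is smooth with derivative $\del E^\Om(y)$ and vanishing higher derivatives. I would propose the candidate derivatives
\begin{align*}
  \<R'(u),v\>&=\sum_{b\in\BOm}\b[\psi'(Dy_b+Du_b)-\psi'(Dy_b)\b]Dv_b,\\
  \<R''(u)v,w\>&=\sum_{b\in\BOm}\psi''(Dy_b+Du_b)\,Dv_bDw_b,\\
  \<R'''(u)v,w,z\>&=\sum_{b\in\BOm}\psi'''(Dy_b+Du_b)\,Dv_bDw_bDz_b,
\end{align*}
and check that each sum converges absolutely and defines a bounded multilinear form: the first two follow from Cauchy--Schwarz, while for the cubic form I would use $\sum_b|Dv_b||Dw_b||Dz_b|\leq\|Dv\|_{\ell^\infty}\|Dw\|_{\ell^2}\|Dz\|_{\ell^2}$ together with the embedding $\|D\cdot\|_{\ell^\infty}\leq\|D\cdot\|_{\ell^2}$. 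The Fréchet relations then reduce to second--order Taylor estimates applied termwise, namely $|\psi'(a+h)-\psi'(a)-\psi''(a)h|\leq\smfrac12 M_3 h^2$ and $|\psi''(a+h)-\psi''(a)-\psi'''(a)h|\leq\smfrac12 M_4 h^2$ (and the analogue for $R$ itself), each of which shows the relevant increment is $O(\|D\cdot\|_{\ell^2}^2)=o(\|D\cdot\|_{\ell^2})$ in the appropriate operator norm. Finally, continuity of $R'''$ (Lipschitz continuity, in fact) follows from $|\psi'''(a+h)-\psi'''(a)|\leq M_4|h|$ and the four--factor estimate $\sum_b|Dh_b||Dv_b||Dw_b||Dz_b|\leq\|Dh\|_{\ell^2}\|Dv\|_{\ell^2}\|Dw\|_{\ell^2}\|Dz\|_{\ell^2}$; this is precisely the place where the full $\CC^4$ regularity of $\psi$ is consumed.

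\emph{Main obstacle.} The only genuine difficulty is that the original sum defining $E^\Om(y+u;y)$ need not converge absolutely on $\Hsi(\Om)$, so one cannot differentiate under the summation naively. The decomposition isolating the assumed-bounded functional $\del E^\Om(y)$ is what removes this obstruction: after subtracting it, every remaining sum is quadratic or higher in the increments $Du_b$ and is therefore controlled by $\|Du\|_{\ell^2}$. The recurring technical device throughout is the embedding $\|D\cdot\|_{\ell^\infty}\leq\|D\cdot\|_{\ell^2}$, needed to bound the cubic and quartic sums appearing in the third derivative and its continuity estimate.
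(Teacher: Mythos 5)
Your proof is correct and follows the standard route that the paper itself relies on: the paper omits the argument, deferring to \cite[Lemma 4.1]{HudsonOrtner13}, whose proof is exactly this decomposition into the assumed-bounded linear part $\<\del E^\Om(y),\cdot\>$ plus a Taylor remainder controlled termwise by $\|Du\|_{\ell^2}^2$ using the global bounds on $\psi'',\psi''',\psi^{(4)}$ and the embedding $\ell^2\subset\ell^\infty$. Nothing further is needed.
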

\begin{proof}
  The proof of this statement is almost identical to the proof of
  \cite[Lemma 4.1]{HudsonOrtner13} and hence we omit it. We note that
  in a finite domain, the condition that $\del E^\Om(y)$ is a bounded
  linear functional is always satisfied, since $\Hsi(\Om)$ is a finite
  dimensional space.
\end{proof}

\subsection{Stability of the homogeneous lattice}
\label{sec:aux:stabhom}
The following lemma demonstrates that $y=0$ is a globally stable
as well as strongly stable equilibrium. In particular, this shows
that $\yh+u$ cannot be a unique stable equilibrium among all
$y\in\Us(\L)$.

\begin{lemma}
\label{th:0_glob_stab}
Suppose that ${\bf (STAB)}$ holds, then the deformation $y \equiv 0$
is a strongly stable equilibrium for any $\Om \subset \L$. Precisely,
\begin{displaymath}
  \<\ddel E^\Om(0)v,v\> = \psi''(0) \sum_{b \in \BOm}Dv_b^2 \qquad
  \text{and } \quad  \psi''(0) \geq \lambda_d.
\end{displaymath}
\end{lemma}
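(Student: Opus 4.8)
The plan is to verify the two displayed identities and then read off strong stability. The first is immediate: since $\psi$ is even, $\psi'$ is odd and $\psi'(0)=0$, so $y\equiv 0$ is a critical point, $\<\del E^\Om(0),v\>=\sum_{b\in\BOm}\psi'(0)Dv_b=0$; evaluating the second-derivative formula at the constant strain field $Dy_b\equiv 0$ gives $\<\ddel E^\Om(0)v,v\>=\sum_{b\in\BOm}\psi''(0)(Dv_b)^2 = \psi''(0)\|Dv\|_{\ell^2}^2$. Once the inequality $\psi''(0)\ge\ld$ is established (so in particular $\psi''(0)>0$, as $\ld>0$ by {\bf (STAB)}), this identity yields coercivity with constant $\psi''(0)$; combined with the critical-point property, $\psi\in\CC^4$, and the extension/differentiability of the energy established above, local stability follows from a Taylor expansion, and hence $y\equiv 0$ is strongly stable on every $\Om\subset\L$.

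The substance is therefore the inequality $\psi''(0)\ge\ld$, and the plan is a far-field test-function argument on the full lattice $\L$. The idea is that the Hessian of $E$ at the dislocation solution $\yh+u$ has bond coefficients $\psi''\b(D(\yh+u)_b\b)$, and these converge to $\psi''(0)$ far from the cores. Concretely, I would fix a ball $B_R$ containing the core set $A$ and show that $\dist\b(D(\yh+u)_b,\Z\b)\to 0$ uniformly over bonds $b$ lying outside $B_R$ as $R\to\infty$: the smooth gradient of $\yh$ decays like $|x|^{-1}$, the unit jump across the branch cut is killed by reducing modulo $\Z$, and $u\in\Hsi(\L)$ forces $Du_b\to 0$. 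Since $\psi''$ is continuous and $1$-periodic, hence uniformly continuous, this gives $\delta(R):=\sup\b\{\,|\psi''(D(\yh+u)_b)-\psi''(0)| : b\ \text{outside}\ B_R\,\b\}\to 0$.

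Then for any $v\in\Usz(\L)$ whose bond-support lies outside $B_R$ (obtained by translating a fixed compactly-supported profile far from the cores and subtracting a constant to meet $v(\xi_0)=0$, which leaves $Dv$ unchanged) and normalised by $\|Dv\|_{\ell^2}=1$, strong stability of $\yh+u$ gives
\[
  \ld \le \<\ddel E(\yh+u)v,v\> = \psi''(0) + \sum_{b\in\BOm}\b[\psi''(D(\yh+u)_b)-\psi''(0)\b](Dv_b)^2 \le \psi''(0)+\delta(R).
\]
Letting $R\to\infty$ and using $\delta(R)\to 0$ yields $\ld\le\psi''(0)$, as required. The main obstacle is the uniform decay claim underlying $\delta(R)\to 0$: one must control the strains across the branch cut, where $D\yh_b$ is near $\pm1$ rather than $0$, by working modulo $\Z$ and invoking the periodicity of $\psi''$, and then combine the pointwise decay of $D\yh$ with the mere $\ell^2$-summability of $Du$ into a single uniform far-field bound. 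Everything else is routine bookkeeping.
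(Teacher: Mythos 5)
Your proposal is correct and follows essentially the same route as the paper: the identity for $\<\ddel E^\Om(0)v,v\>$ and $\psi'(0)=0$ from evenness are immediate, and the inequality $\psi''(0)\ge\ld$ is obtained by testing $\ddel E^\L(\yh+u)$ with far-field test functions (the paper translates a fixed $v$ via $v\circ G^{C^i}$ with $\dist(C^i,0)\to\infty$, which is the same device as your translated profile) and using $\dist(D(\yh+u)_b,\Z)\to 0$ away from the core together with the periodicity and continuity of $\psi''$.
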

\begin{proof}
  Suppose that $v\in\Usz(\Om)$ and $C^i\in\Cells$ is a sequence such that
  $\dist(C^i,0)\to\infty$ as $i\to \infty$. Define $v^i:=v\circ G^{C^i}$; if
  $y=\yh+u$,
  \begin{equation*}
    \ld\|Dv\|_2^2=\ld\|Dv^i\|_2^2\leq\<\ddel E^\L(y)v^i,v^i\>=
      \sum_{b\in\Bonds} \psi''(Dy_b)\b(Dv^i_b\b)^2=
      \sum_{b\in\Bonds}\psi''\b(D(y\circ H^{C^i})_b\b)Dv^2_b,
  \end{equation*}
  and since $\dist(Dy_b,\Z)\to0$ as $\dist(b,0)\to\infty$, it follows that
  $0<\ld\leq\psi''(0)$.
  
  Since $\psi$ is even about $0$ it must be that $\psi'(0)=0$, and
  the statement follows trivially.
\end{proof}

\subsection{The linear elasticity residual}
\label{sec:linelast_res}
We now prove a result estimating the residual of the
pure linear elasticity predictor.

\begin{lemma}
  \label{th:residual_sum_yhat}
  Let $\D$ be a dislocation configuration in $\L$ and $z := \sum_{(C,
    s) \in \D} \yh \circ H^{C}$. 
  For $L_\D$ sufficiently large, there exists $g : \Bonds \to \R$ such
  that
  \begin{equation}
    \label{eq:residual_sum_yhat}
    \< \del E(z), v \> = \sum_{b \in \Bonds} g_b Dv_b
    \quad \text{and} \quad |g_b| \leq 
    c \sum_{(C,s) \in \D} \dist(b, C)^{-3}.
  \end{equation}
\end{lemma}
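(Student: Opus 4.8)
The plan is to begin from the identity $\langle\del E(z),v\rangle=\sum_{b\in\Bonds}\psi'(Dz_b)\,Dv_b$, valid for $v\in\Usz(\L)$, so that $g_b=\psi'(Dz_b)$ is the naive candidate. Because $\psi$ is $1$-periodic I may replace $Dz_b$ by a corrected difference $\alpha_b\in[-\tfrac12,\tfrac12]$ for $\alpha\in[Dz]$; away from the cores this is a smooth sampling of $\sum_{(C,s)\in\D}\nabla(\yh\circ H^C)$, and using $|\nabla\yh(x)|=\tfrac{1}{2\pi}|x|^{-1}$ together with a comparison of the finite difference to the continuum gradient I expect $|\alpha_b|\le c\sum_{(C,s)\in\D}\dist(b,C)^{-1}$. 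Since $\psi'(\alpha_b)$ itself decays only like $\dist(b,C)^{-1}$, the candidate must be traded for an equivalent $1$-form, i.e. one differing from it by a divergence-free form, which leaves the functional on $\Usz$ unchanged.

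First I would Taylor-split $\psi'(s)=\psi''(0)s+R(s)$. As $\psi$ is even, $\psi'$ is odd and $\psi'''(0)=0$, so $\psi\in C^4$ yields $|R(s)|\le c|s|^3$ on $[-\tfrac12,\tfrac12]$. The remainder contributes $g^{(1)}_b:=R(\alpha_b)$ with $|g^{(1)}_b|\le c|\alpha_b|^3\le c\big(\sum_C\dist(b,C)^{-1}\big)^3$; Hölder's inequality gives $\big(\sum_C d_C^{-1}\big)^3\le N^2\sum_C d_C^{-3}$, whence $|g^{(1)}_b|\le c(N)\sum_{(C,s)\in\D}\dist(b,C)^{-3}$. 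This nonlinear remainder is precisely the term that produces the stated cubic rate.

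The linear part $\psi''(0)\sum_b\alpha_b\,Dv_b$ is the crux. Here I would exploit that each $\yh\circ H^C$ is harmonic away from its core and compare it to the exact discrete harmonic dislocation field: let $\Phi^C$ be the corresponding solution of the discrete Laplace problem carrying a unit Burgers vector at $C$, characterised by $\sum_b D\Phi^C_b\,Dw_b=0$ for all $w\in\Usz$. Subtracting the identity $0=\psi''(0)\sum_b D\big(\sum_C\Phi^C\big)_b Dv_b$ leaves $g^{(2)}_b:=\psi''(0)\big(\alpha_b-\sum_{(C,s)\in\D}D\Phi^C_b\big)$, so the claim reduces to the comparison estimate $|D(\yh\circ H^C-\Phi^C)_b|\le c\,\dist(b,C)^{-3}$ between the continuum and discrete dislocation solutions. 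This is a lattice Green's function estimate: the two fields solve the same equation away from the core, and their difference is controlled by the discrepancy between the discrete and continuum lattice Green's functions, which on the triangular lattice is $O(r^{-2})$ since the second and fourth lattice moments are isotropic; differentiating gives the $r^{-3}$ bond-wise bound.

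I expect the comparison estimate of the previous paragraph to be the main obstacle: it is the only step requiring genuine lattice analysis, via the asymptotics of the discrete Green's function and the symmetry of the triangular lattice, rather than Taylor expansion. The remaining bookkeeping is routine: (i) for $L_\D$ sufficiently large the fields induced by distant cores are smooth and small near every core, which validates the choice $\alpha\in[Dz]$ and all the expansions above, and is where the hypothesis on $L_\D$ enters; and (ii) setting $g:=g^{(1)}+g^{(2)}$ and noting that $g^{(2)}$ decays at least as fast as $r^{-3}$ yields the asserted bound with $c=c(N)$. All identities above are used only against $v\in\Usz$ with bounded support, so no passage to the closure of $\Usz$ is needed.
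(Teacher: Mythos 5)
Your overall architecture (reduce to the bond--length form $\alpha\in[Dz]$, split $\psi'(\alpha_b)=\psi''(0)\alpha_b+R(\alpha_b)$, kill the cubic remainder by evenness of $\psi$, and then repair the slowly decaying linear part by subtracting a form that is divergence--free against $\Usz(\L)$) matches the paper's strategy. The divergence lies in \emph{which} divergence--free form you subtract, and this is where your argument has a genuine gap. You subtract the exact \emph{discrete} harmonic dislocation field $\Phi^C$, so that the entire content of the lemma is pushed into the comparison estimate $|\alpha_b-\sum_{(C,s)\in\D}D\Phi^C_b|\lesssim \dist(b,C)^{-3}$, which you assert via ``discrete vs.\ continuum Green's function asymptotics'' but do not prove. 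This step is not routine: (i) the object $\Phi^C$ --- a $1$-form with unit circulation around $C$ satisfying $\sum_b D\Phi^C_b\,Dw_b=0$ for all $w\in\Usz$ --- is the discrete harmonic \emph{conjugate} of a lattice Green's function, not the Green's function itself, and on the triangular lattice its construction and asymptotic expansion require real work; (ii) ``the discrepancy is $O(r^{-2})$, so differentiating gives $O(r^{-3})$'' is not a valid inference unless you establish that the error term in the expansion is itself differentiable at the improved rate; and (iii) the paper's own regularity machinery (Lemma \ref{th:reg_corrector}, via \cite[Lemma 3.4]{EOS13} with $p=3$) delivers only an $r^{-2}$ decay rate for such core correctors, so the $r^{-3}$ rate you need would have to come from additional structure (vanishing multipole moments by lattice symmetry) that you would have to verify. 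As written, the crucial estimate is a conjecture, not a proof.

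The paper avoids all of this with a purely local device: it subtracts
\begin{equation*}
  h_b:=\frac{\psi''(0)}{V}\lim_{\eps\to0}\int_{\omega_b\setminus\bar C_\eps}\nabla z\cdot a_i\dx,
\end{equation*}
the cell--pair average of the \emph{continuum} gradient. This is divergence--free against $\Usz(\L)$ by the divergence theorem, because $\sum_b h_bDv_b=\frac{\psi''(0)}{V}\int\nabla z\cdot\nabla Iv\dx=0$ for the harmonic field $z$. The resulting $g_b=\psi'(\alpha_b)-h_b$ is then estimated by Taylor expanding $\nabla z$ about the midpoint of $b$: since both the bond $b$ and the patch $\omega_b$ are symmetric under point reflection through that midpoint, the $\nabla^2z$ term cancels between the line average $\alpha_b$ and the area average $h_b/\psi''(0)$, leaving $O(|\nabla^3z|)=O(\dist(b,C)^{-3})$ with no lattice Green's function input whatsoever. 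If you wish to salvage your route you must either prove the existence and third--order matching of $\Phi^C$ with $\yh\circ H^C$ (a result comparable in difficulty to the lemma itself), or switch to a continuum comparison field as the paper does.
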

\begin{proof}
  The canonical form for $\del E(z)$ is $\< \del E(z), v \> = \sum
  \psi'(\alpha_b) Dv_b$, where $\alpha$ is a bond length one-form
  associated with $Dz$.  For $L_\D$ sufficiently large, arguing as in
  \cite[Lemma 4.3]{HudsonOrtner13} we obtain that $\alpha_b \in (-1/2,
  1/2)$, which entails that $\alpha\in[Dz]$ is unique, and may be written in
  the form $\alpha_{(\xi,\xi+a_i)} = \int_0^1 \nabla_{a_i} z(\xi+t a_i)\dt$,
  where here and below $\nabla z$ will mean the extension of the gradient
  of $z$ to a function in
  $\CC^\infty\b(\R^2\setminus \bigcup_{(C,s)\in\D}\{x^C\};\R^2\b)$.

  We note that $|\psi'(\alpha_b)| \lesssim \sum \dist(b,C)^{-1}$ only, so we
  must remove a ``divergence-free component''. To that end, let
  $\omega_b := \bigcup \{ C' \in \Cells \sep \pm b \in \partial C', C'
  \text{ positively--oriented}\}$ and let $V := |\omega_b|$ for some
  arbitrary $b \in \Bonds$. Further, let
  $\bar{C}_\epsilon := \bigcup_{(C,s) \in\D} B_\epsilon(x^C)$. Then, for
  $b = (\xi, \xi+a_i)$, we define
  \begin{displaymath}
    h_b :=  \frac{\psi''(0)}{V} \lim_{\epsilon \to 0} 
    \int_{\omega_b \setminus \bar{C}_\epsilon}  \nabla z \cdot a_i \dx
    \qquad \text{and} \qquad 
    g_b := \psi'(\alpha_b) - h_b.
  \end{displaymath}
  It is fairly straightforward to show that the limit exists by applying
  the divergence theorem, which entails that $h_b$ and $g_b$ are
  well--defined and
  \begin{displaymath}
    \sum_{b \in \Bonds} h_b Dv_b = 
    \lim_{\epsilon \to 0} \frac{\psi''(0)}{V} \int_{\R^2 \setminus \bar{C}_\epsilon}
    \nabla z \cdot \nabla Iv \dx = 0
  \end{displaymath}
  for all $v \in \Usz(\L)$, where $Iv$ denotes the continuous and
  piecewise affine interpolant of $v$.  Thus, we obtain that $\< \del
  E(z), v \> = \sum_{b \in \Bonds} g_b Dv_b$ as desired.

  It remains to prove the estimate on $g_b$. Taylor expanding, we obtain
  \begin{equation*}
    \psi'(\alpha_b)-h_b = \psi'(0)+\psi''(0)\bg(\alpha_b-\frac{1}{V}
      \lim_{\epsilon \to 0}\int_{\omega_b \setminus \bar{C}_\epsilon}
      \nabla z \cdot a_i \dx\bg)+\smfrac12\psi'''(0)|\alpha_b|^2
      +O\b(|\alpha_b|^3\b).
  \end{equation*}
  The first and third terms vanish since $\psi$ is even. Note that
  $\alpha_b=\frac1V\int_{\omega_b} \nabla z\cdot a_i\dx$, where $a_i$ is the
  direction of the bond $b$, so Taylor expanding about the midpoint of
  $b$ and using the symmetry of $b$ and $\omega_b$ to eliminate the term
  involving $\nabla^2z$, we obtain
  \begin{equation*}
    \int_b\nabla z\cdot a_i\dx-\frac{1}{V}
      \lim_{\epsilon \to 0}\int_{\omega_b \setminus \bar{C}_\epsilon}
      \nabla z \cdot a_i \dx = O\b(|\nabla^3z|\b).
  \end{equation*}
  Finally, as $|\alpha_b|\lesssim \dist(b,C)^{-1}$ and
  $|\nabla^3 z|\lesssim \dist(b,C)^{-3}$ for all $(C,s)\in\D$,
  the stated estimate follows.
\end{proof}

\subsection{Regularity of the corrector}
\label{sec:reg_corrector}
We now slightly refine the general regularity result of Theorem~3.1 in
\cite{EOS13}, exploiting the evenness of the potential $\psi$.

\begin{lemma}
  \label{th:reg_corrector}
  Let $u$ be the core corrector whose existence postulated in {\bf
    (STAB)}: then there exists a constant $C_{\rm reg}$ such that
  \begin{displaymath}
    |Du_b| \leq C_{\rm reg}\,\dist(b,C)^{-2}\qquad \text{ for all } b \in \Bonds\text{ and }(C,s)\in\D.
  \end{displaymath}
\end{lemma}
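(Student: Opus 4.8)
The plan is to regard $u$ as the corrector to the linear--elastic predictor $\yh$ and to feed an improved residual estimate into the general decay machinery of \cite[Theorem~3.1]{EOS13}. By {\bf (STAB)}, $y=\yh+u$ is a strongly stable equilibrium, so $\del E(y)=0$ and $\<\ddel E(y)v,v\>\geq\ld\|Dv\|_2^2$ for all $v\in\Usz(\L)$; in particular the linearised operator $\ddel E(\yh+u)$ is uniformly positive, which is exactly the ellipticity needed for sharp lattice Green's function estimates. The single dislocation core of $\yh$ sits at the barycentre of $C_0$, so it suffices to establish $|Du_b|\lesssim\dist(b,C_0)^{-2}$; the stated form for a general $(C,s)$ then follows, since the translated correctors $(\yh+u)\circ G^C$ used to build the predictor in Theorem~\ref{th:full_latt} inherit this decay about their respective cores $C$ by the symmetry built into $H^C$.

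The key input is the enhanced decay of the force residual of the pure predictor. Applying Lemma~\ref{th:residual_sum_yhat} in the single--dislocation case $\D=\{(C_0,+1)\}$, $z=\yh$, we obtain $\<\del E(\yh),v\>=\sum_b g_b Dv_b$ with $|g_b|\lesssim\dist(b,C_0)^{-3}$. The extra power relative to the generic rate $\dist^{-2}$ is precisely the gain afforded by the evenness of $\psi$: even symmetry forces $\psi'(0)=\psi'''(0)=0$, so in the Taylor expansion of $\psi'(\alpha_b)$ the constant and quadratic terms vanish, and (after removing the divergence--free part of the linear term, as in the proof of Lemma~\ref{th:residual_sum_yhat}) the leading surviving contributions are of order $|\alpha_b|^3$ and $|\nabla^3\yh|$, both $\lesssim\dist^{-3}$.

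I would then invoke the regularity result of \cite[Theorem~3.1]{EOS13}, which expresses $Du$ through the action of the inverse of $\ddel E(y)$ on the residual $g$ and bounds it via the decay of the lattice Green's function. In two dimensions the relevant gradient--to--gradient kernel is of Calder\'on--Zygmund type and decays like $\dist^{-2}$; convolving this kernel against $|g_b|\lesssim\dist(b,C_0)^{-3}$, one checks that the near--source contribution dominates and produces exactly $|Du_b|\lesssim\dist(b,C_0)^{-2}$, the summability of $\dist^{-3}$ away from the core ensuring that no logarithmic factor appears. With only the generic residual rate $\dist^{-2}$ the same computation yields the slightly weaker $\dist^{-2}\log\dist$, and this is the sense in which the present statement refines \cite[Theorem~3.1]{EOS13}.

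The main obstacle is the bookkeeping in this convolution together with the fact that the rate saturates at $\dist^{-2}$: the two--dimensional lattice Green's function prevents any improvement below this power no matter how fast $g$ decays, so the role of evenness is only to remove the logarithmic loss rather than to accelerate the algebraic rate. One must also confirm that the nonlinear remainder in $\del E(\yh+u)=0$ does not spoil the estimate; since these terms are higher order in $Du$, which already decays, they are controlled by a standard bootstrap using the uniform positivity of $\ddel E(y)$ furnished by {\bf (STAB)}, and hence do not alter the leading rate.
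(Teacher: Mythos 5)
Your proposal is correct and follows essentially the same route as the paper: both use the residual estimate of Lemma~\ref{th:residual_sum_yhat} (with its $\dist^{-3}$ decay coming from the evenness of $\psi$) together with the stability from {\bf (STAB)} to invoke the decay machinery of \cite{EOS13} (the paper cites Lemma~3.4 there with $p=3$), yielding $|Du_b|\lesssim\dist(b,C)^{-2}$. Your additional remarks on the Green's-function mechanism and the saturation of the rate at $\dist^{-2}$ are an accurate gloss on what that cited lemma does.
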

\begin{proof}
  Our setting satisfies all assumptions of the $d = 2, m = 1$
  (anti--plane) case described in Section 2.1 of \cite{EOS13} with
  $\mathcal{N}_\xi = \{ a_i \sep i = 1, \dots, 6 \}$ for all $\xi \in
  \L$, and the complete set of assumptions summarized in {\bf (pD)} in
  Section 2.4.5 of \cite{EOS13}. Using Lemma \ref{th:residual_sum_yhat},
  we may apply Lemma 3.4 \cite{EOS13} with $p=3$, implying
  $|Du_b| \lesssim \dist(b,C)^{-2}$.
\end{proof}

\subsection{Approximation by truncation}
Following \cite{EOS13} we define a family of truncation operators
$\Pi^C_R$, which we will apply to $u\in\Hsi(\Om)$.  Let
$\eta\in\CC^1(\R^2)$ be a cut off function which satisfies
\begin{equation*}
  \eta(x):=\cases{
  1, & |x|\leq\smfrac34,\\
  0, & |x|\geq1.
  }
\end{equation*}
Let $Iu$ be the piecewise affine interpolant of $u$ over the
triangulation given by $\TL=\Cells$. For $R > 2$ let
$A_R:=B_R\setminus B_{R/2+1}$, an annulus over which $\eta(x/R)$ is
not constant. Define $\Pi^C_R:\Hsi\to\Usz$ by
\begin{equation*}
  \Pi^C_Ru(\xi):=\eta\B(\smfrac{\xi-x^C}R\B)\b(u(\xi)-a^C_R\b),\qquad
    \text{where}\qquad a^C_R:=\avint_{x^C+A_R} Iu(x)\dx.
\end{equation*}
In addition, we define $\Pi_R := \Pi_R^{C_0}$.

We now state the following result concerning the approximation
property of the family of truncation operators $\Pi^C_R$, which
follows from results in \cite{EOS13}.

\begin{lemma}
  \label{th:trunc_est}
  Let $v \in \Hsi(\L)$ and $C \in \Cells$, then
  \begin{equation}
    \label{eq:trunc_est:general}
    \b\|D \Pi_R^C v - D v \b\|_{\ell^2(\Bonds)}\leq
    \gamma_1 \|Dv\|_{\ell^2(\Bonds\setminus B_{R/2}(x^C))}.
  \end{equation}
  where $\gamma_1$ is independent of $R, v$ and $C$. 

  In particular, if $u \in \Hsi(\L)$ is the core corrector from ${\bf
    (STAB)}$, then
  \begin{equation}
    \label{eq:trunc_est:core}    
    \b\|D\Pi^C_R (u \circ G^C) - D (u \circ G^C) \b\|_{\ell^2(\Bonds)} 
    \leq \gamma_2 R^{-1},
  \end{equation}
  where $\gamma_2$ is independent of $R$ and $C$.
\end{lemma}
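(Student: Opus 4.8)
The plan is to establish the general estimate \eqref{eq:trunc_est:general} first, and then obtain the core estimate \eqref{eq:trunc_est:core} as a consequence by specialising to $v = u \circ G^C$ and invoking the decay bound of Lemma~\ref{th:reg_corrector}. (Both follow from the truncation machinery of \cite{EOS13}, but it is instructive to sketch the self-contained argument.) For the general bound I would write $\eta_\xi := \eta\big((\xi-x^C)/R\big)$ and set $w := v - a_R^C$, noting that $Dw_b = Dv_b$ since the additive constant $a_R^C$ cancels in finite differences. A discrete product rule then gives, for $b = (\xi,\zeta)$,
\[
  D(\Pi_R^C v)_b - Dv_b = (\eta_\xi - 1)\,Dv_b + (D\eta_b)\,w(\zeta),
  \qquad D\eta_b := \eta_\zeta - \eta_\xi,
\]
and I would bound the two terms separately.

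The first term is the easy one. Since $\eta(\cdot/R) \equiv 1$ on $B_{3R/4}(x^C)$, the factor $\eta_\xi - 1$ vanishes unless $|\xi - x^C| > 3R/4$; using $|\eta_\xi - 1| \le 1$ and $3R/4 > R/2$, the $\ell^2$ norm of this contribution is controlled directly by $\|Dv\|_{\ell^2(\Bonds \setminus B_{R/2}(x^C))}$. The second term carries the analysis. Because $\nabla\eta(\cdot/R)$ is supported in $\{3R/4 \le |x| \le R\} \subset A_R$ with $|\nabla\eta(\cdot/R)| \le C/R$, the finite difference satisfies $|D\eta_b| \le C/R$ and is supported on bonds meeting this annulus, so that $\|(D\eta_b)w(\zeta)\|_{\ell^2} \le \tfrac{C}{R}\|w\|_{\ell^2(A_R)}$. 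To close the estimate I would invoke a Poincar\'e inequality: since $a_R^C$ is by definition the mean of $Iv$ over $A_R$, the interpolant $Iw$ has zero mean on $A_R$, and a scaling argument (the annulus $A_R$ being, up to a bounded perturbation of its inner radius, a dilation by $R$ of a fixed reference annulus) yields $\|Iw\|_{L^2(A_R)} \le C R\,\|\nabla Iv\|_{L^2(A_R)}$ with $C$ independent of $R$. Passing between the nodal $\ell^2$ norms and the $L^2$ norms of the piecewise-affine interpolant on the shape-regular triangulation $\TL$ (equivalent up to absolute constants), the factor $R$ cancels the $1/R$, bounding this term also by $C\|Dv\|_{\ell^2(\Bonds \setminus B_{R/2}(x^C))}$. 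Summing the two contributions gives \eqref{eq:trunc_est:general} with a constant $\gamma_1$ independent of $R$, $v$ and $C$.

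For \eqref{eq:trunc_est:core} I would apply the general bound with $v = u \circ G^C$. As $G^C$ is a lattice automorphism sending $x^C$ to $x^{C_0} = 0$, composition with it is an isometry of $\ell^2(\Bonds)$ carrying $\Bonds \setminus B_{R/2}(x^C)$ onto $\Bonds \setminus B_{R/2}(0)$, whence
\[
  \big\|D(u\circ G^C)\big\|_{\ell^2(\Bonds \setminus B_{R/2}(x^C))}
  = \|Du\|_{\ell^2(\Bonds \setminus B_{R/2}(0))}.
\]
The decay estimate of Lemma~\ref{th:reg_corrector}, $|Du_b| \le C_{\rm reg}\,\dist(b,C_0)^{-2}$ for the corrector's core $C_0$, then gives
\[
  \|Du\|_{\ell^2(\Bonds \setminus B_{R/2}(0))}^2
  \lesssim \sum_{\dist(b,0) > R/2} \dist(b,C_0)^{-4}
  \lesssim \int_{R/2}^\infty r^{-4}\,r\,\dr
  \lesssim R^{-2},
\]
using that the number of bonds at distance $\sim r$ from the origin grows linearly in $r$. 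Hence the right-hand side of \eqref{eq:trunc_est:general} is $O(R^{-1})$, and combining with $\gamma_1$ yields \eqref{eq:trunc_est:core} with $\gamma_2$ independent of $R$ and $C$.

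The main obstacle is the Poincar\'e step: one must verify that its constant is genuinely $R$-independent (via the dilation structure of $A_R$, handling the bounded shift $+1$ in the inner radius) and correctly transfer between the discrete nodal $\ell^2$ norms and the continuous $L^2$ norms of the interpolant. Once this uniform Poincar\'e estimate is in hand, the remaining steps — the product-rule splitting, the support considerations for $\eta_\xi - 1$ and $D\eta_b$, the isometry under $G^C$, and the summation of the decay tail — are routine bookkeeping.
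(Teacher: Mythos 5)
Your proposal is correct and follows essentially the same route as the paper: the paper reduces to $C = C_0$ by lattice symmetry, cites Lemma 4.3 of \cite{EOS13} for the general cut-off estimate (whose proof is precisely the product-rule-plus-scaled-Poincar\'e argument you reconstruct), and then deduces \eqref{eq:trunc_est:core} from the decay bound of Lemma \ref{th:reg_corrector} exactly as you do. The only difference is that you supply a self-contained proof of the cited truncation estimate rather than invoking the reference.
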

\begin{proof}
  Since $\|\cdot\|_{\ell^2(\Bonds)}$ is invariant under composition of
  functions with lattice automorphisms we can assume, without loss of
  generality, that $C = C_0$. The estimate
  (\ref{eq:trunc_est:general}) then is simply a restatement of
  \cite[Lemma 4.3]{EOS13}. The second estimate
  (\ref{eq:trunc_est:core}) then follows immediately from Lemma
  \ref{th:reg_corrector}.
  %
\end{proof}

Next, we show that the assumption {\bf (STAB)} implies that that
$\ddel E^\L(\yh+\Pi_Ru)$ is positive for sufficiently large $R$.

\begin{lemma}
\label{th:disl_trunc_stab}
There exist constants $\ldR$ such that
\begin{equation}
  \<\ddel E^\L\b(\yh+\Pi_Ru\b)v,v\>\geq \ldR \|Dv\|_2^2\qquad
    \text{for all}\quad v\in\Usz,
\end{equation}
and $\ldR\to\ld > 0$ as $R\to\infty$. 
\end{lemma}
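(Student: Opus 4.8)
The plan is to compare the second variation at $\yh+\Pi_Ru$ with the one at $\yh+u$, whose coercivity is exactly what {\bf (STAB)} provides. Since every $v\in\Usz$ has $Dv$ of bounded support, the second variation
\[
  \<\ddel E^\L(y)v,v\> = \sum_{b\in\Bonds}\psi''(Dy_b)(Dv_b)^2
\]
is a finite sum, so I may split it bond by bond,
\[
  \<\ddel E^\L(\yh+\Pi_Ru)v,v\>
  = \<\ddel E^\L(\yh+u)v,v\>
  + \sum_{b\in\Bonds}\b[\psi''\b(D(\yh+\Pi_Ru)_b\b)-\psi''\b(D(\yh+u)_b\b)\b](Dv_b)^2,
\]
using that the finite difference operator $D$ is linear, so the two arguments of $\psi''$ differ precisely by $D(\Pi_Ru-u)_b$.

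The first term is bounded below by $\ld\|Dv\|_2^2$ directly from the strong stability inequality \eqref{eq:defn_strong_stab} with $y=\yh+u$. For the remainder, I would use that $\psi\in\CC^4(\R)$ is $1$-periodic, so $M:=\|\psi'''\|_{L^\infty(\R)}<\infty$; the mean value theorem then gives $\b|\psi''(D(\yh+\Pi_Ru)_b)-\psi''(D(\yh+u)_b)\b|\leq M\,\b|D(\Pi_Ru-u)_b\b|$ for each bond $b$. Factoring out the supremum of the perturbation, the remainder is bounded in absolute value by $M\,\|D(\Pi_Ru-u)\|_{\ell^\infty}\|Dv\|_2^2$, so that
\[
  \<\ddel E^\L(\yh+\Pi_Ru)v,v\>
  \geq \b(\ld - M\,\|D(\Pi_Ru-u)\|_{\ell^\infty}\b)\|Dv\|_2^2
  \qquad\forall v\in\Usz.
\]

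I would therefore set $\ldR:=\ld - M\,\|D(\Pi_Ru-u)\|_{\ell^\infty}$, which yields the claimed inequality. To obtain $\ldR\to\ld$, I invoke Lemma \ref{th:trunc_est} with $C=C_0$ (for which $G^{C_0}=\mathrm{id}$ and $\Pi_R=\Pi_R^{C_0}$, so that \eqref{eq:trunc_est:core} reads $\|D(\Pi_Ru-u)\|_{\ell^2}\leq\gamma_2R^{-1}$), together with the elementary embedding $\|\cdot\|_{\ell^\infty}\leq\|\cdot\|_{\ell^2}$, giving $\ldR\geq\ld-M\gamma_2R^{-1}\to\ld>0$. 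The one point requiring care --- and the only place the argument could fail if the truncation estimate were weaker --- is the matching of norms: the remainder naturally pairs a single factor $|D(\Pi_Ru-u)_b|$ against $(Dv_b)^2$, forcing an $\ell^\infty$ estimate on the perturbation, whereas Lemma \ref{th:trunc_est} supplies an $\ell^2$ bound; the embedding $\ell^2\hookrightarrow\ell^\infty$ closes exactly this gap.
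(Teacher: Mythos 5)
Your proposal is correct and follows essentially the same route as the paper: split off $\<\ddel E^\L(\yh+u)v,v\>$, bound the difference of Hessians by $\|\psi'''\|_\infty\|D\Pi_Ru-Du\|_{\ell^\infty}\|Dv\|_2^2$, and control the $\ell^\infty$ norm via the $\ell^2\hookrightarrow\ell^\infty$ embedding together with the truncation estimate \eqref{eq:trunc_est:core}. The only difference is cosmetic: the paper writes the perturbation at the operator level rather than bond by bond.
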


\begin{proof}
  Noting that $\|Dv\|_\infty\leq\|Dv\|_2$ for any $v\in\Hsi(\L)$,
\begin{align*}
  \<\del^2 E^\L(\yh+\Pi_Ru)v,v\> &= \b\< [\del^2 E^\L(\yh+\Pi_Ru)
    -\del^2 E^\L(\yh+u)]v,v\b\>+\b\<\del^2 E^\L(\yh+u)v,v\b\>,\\
  &\geq \b(\ld-\|\psi'''\|_\infty \b\|D\Pi_Ru-Du\b\|_\infty\b) \|Dv\|_2^2,\\
  &\geq \b(\ld-\eps_R\b) \|Dv\|_2^2,
\end{align*}
where $\eps_R\lesssim R^{-1}$ as $R\to\infty$ by Lemma \ref{th:trunc_est}.
\end{proof}

\subsection{Inverse Function Theorem}
We review a quantitative version of the inverse function theorem, adapted from
\cite[Lemma B.1]{LuskinOrtner13}.

\begin{lemma}
\label{th:inv_func_theorem}
Let $X,Y$ be Hilbert spaces, $w\in X$, $F\in C^2(B_R^X(w);Y)$ with Lipschitz
continuous Hessian, $\|\ddel F(x)-\ddel F(y)\|_{L(X,Y)}\leq M\|x-y\|_X$ for
any $x,y\in B_R^X(w)$. Furthermore, suppose that there exist constants
$\mu,r>0$ such that
\begin{equation*}
  \<\ddel F(w)v,v\>\geq \mu\|v\|_X^2,\quad\|\del F(w)\|_Y\leq r,\quad
    \text{and}\quad\smfrac{2Mr}{\mu^2}<1,
\end{equation*}
then there exists a locally unique $\bar{w}\in B_R^X(w)$ such that
$\del F(\bar{w})=0$, $\|w-\bar{w}\|_X\leq \frac{2r}{\mu}$ and 
\begin{equation*}
  \<\ddel F(\bar{w})v,v\>\geq \b(1-\smfrac{2Mr}{\mu^2}\b)\mu\|v\|_X^2.
\end{equation*}
\end{lemma}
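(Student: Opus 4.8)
The plan is to prove this by a contraction-mapping (simplified Newton) argument. Write $L := \ddel F(w) \in L(X,Y)$ for the Hessian at the base point. First I would establish that $L$ is invertible with $\|L^{-1}\|_{L(Y,X)} \le \mu^{-1}$: this follows from the coercivity hypothesis $\<\ddel F(w)v,v\> \ge \mu\|v\|_X^2$ together with boundedness of $L$, via the Lax--Milgram theorem. With $L^{-1}$ in hand, define the fixed-point map $T(x) := x - L^{-1}\,\del F(x)$ on the closed ball $\bar B := \b\{x \in X : \|x - w\|_X \le 2r/\mu\b\}$, which lies in $B_R^X(w)$ provided $2r/\mu < R$. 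Since $L$ is invertible, a point $\bar w$ is a fixed point of $T$ if and only if $\del F(\bar w) = 0$, so it suffices to produce a unique fixed point of $T$ in $\bar B$.

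The two estimates driving the argument both come from writing differences of $\del F$ as integrals of $\ddel F$ and comparing with $L$. For the contraction property, for $x,y \in \bar B$ I would use
\[
  T(x) - T(y) = L^{-1}\int_0^1 \b[\ddel F(w) - \ddel F\b(y + t(x-y)\b)\b](x-y)\dt,
\]
and since $\bar B$ is convex the integrand argument lies in $\bar B$, so the Lipschitz bound gives $\|\ddel F(w) - \ddel F(\cdot)\|_{L(X,Y)} \le M\cdot(2r/\mu)$; hence $\|T(x) - T(y)\|_X \le q\|x-y\|_X$ with $q := 2Mr/\mu^2 < 1$. The crucial refinement is in the self-mapping check: estimating $T(x) - w = (T(x) - T(w)) - L^{-1}\del F(w)$ and measuring the Lipschitz deviation of $\ddel F$ along the segment emanating from $w$ (rather than uniformly across the whole ball) yields the sharper bound $\|T(x) - T(w)\|_X \le \tfrac{M}{2\mu}\|x-w\|_X^2 \le \tfrac{M}{2\mu}(2r/\mu)^2$, while $\|L^{-1}\del F(w)\|_X \le r/\mu$. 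Adding these gives $\|T(x) - w\|_X \le (r/\mu)(1 + q) < 2r/\mu$, so $T$ maps $\bar B$ into itself. Banach's fixed-point theorem, applied on the complete space $\bar B$, then furnishes a locally unique $\bar w \in \bar B$ with $\del F(\bar w) = 0$ and $\|w - \bar w\|_X \le 2r/\mu$.

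It remains to transfer coercivity from $w$ to $\bar w$. Here I would write $\<\ddel F(\bar w)v,v\> = \<\ddel F(w)v,v\> + \<[\ddel F(\bar w) - \ddel F(w)]v,v\>$ and bound the second term below by $-M\|\bar w - w\|_X\|v\|_X^2 \ge -(2Mr/\mu)\|v\|_X^2$; combined with coercivity at $w$ this gives $\<\ddel F(\bar w)v,v\> \ge \mu(1 - 2Mr/\mu^2)\|v\|_X^2$, as claimed.

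The step I expect to require the most care is the self-mapping estimate: a naive use of the uniform Lipschitz bound $M\cdot(2r/\mu)$ over $\bar B$ would only close the argument under the stronger smallness condition $4Mr/\mu^2 \le 1$. Obtaining the factor $\tfrac12$ — by integrating the linear-in-$t$ deviation $\|\ddel F(w) - \ddel F(w + t(x-w))\|_{L(X,Y)} \le Mt\|x-w\|_X$ along the segment from $w$ — is exactly what makes the stated hypothesis $2Mr/\mu^2 < 1$ sufficient, and it dovetails with the final coercivity constant $(1 - 2Mr/\mu^2)\mu$.
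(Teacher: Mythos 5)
Your proof is correct and is essentially the standard Newton--Kantorovich/contraction argument underlying the cited result: the paper itself gives no proof of this lemma, simply quoting [LuskinOrtner13, Lemma B.1], whose argument proceeds exactly along your lines (invertibility of $\ddel F(w)$ via Lax--Milgram with $\|\ddel F(w)^{-1}\|\le\mu^{-1}$, the map $T(x)=x-\ddel F(w)^{-1}\del F(x)$ on the ball of radius $2r/\mu$, and the factor $\tfrac12$ obtained by integrating the linear-in-$t$ Lipschitz deviation along the segment from $w$). The only point worth recording is that your argument, like the lemma as stated, implicitly requires $2r/\mu\le R$ so that the ball $\bar{B}$ lies inside $B_R^X(w)$ --- a condition satisfied in every application made in the paper.
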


\section{Proof for the Infinite Lattice}
\label{sec:inf}
Before considering the case of finite lattice domains, we first set
out to prove Theorem \ref{th:full_latt} in the case when $\Om=\L$. In
this case we are able to give a substantially simplified argument that
concerns only the interaction between dislocations rather than the
additional difficulty of the interaction of dislocations with the
boundary which is present in the finite domain case.

\subsection{Analysis of the predictor}
\label{sec:inf:pred}
Suppose that $\D$ is a dislocation configuration in $\L$: we define an
approximate solution (predictor) with truncation radius $R$ to be
\begin{equation}
  z := \sum_{(C,s)\in\D} s\,\b(\yh + \Pi_R u\b) \circ G^C.  
  \label{eq:appr_sol_full}
\end{equation}
The following lemma provides an estimate on the residual of such approximate
solutions in terms of $L_\D$.

\begin{lemma}
  \label{th:full_latt_cons}
  Suppose $z$ is the approximate solution for a dislocation
  configuration $\D$ in $\L$ as defined in (\ref{eq:appr_sol_full})
  with truncation radius $R = L_\D/5$. Then there exists $L_0 =
  L_0(N)$ and a constant $c = c(N)$, such that, whenever $L_\D > L_0$,
  \begin{displaymath}
    \b\| \del E(z) \b\|_{\Hsi(\L)^*} \leq c L_\D^{-1}.
  \end{displaymath}
\end{lemma}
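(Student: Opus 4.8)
The plan is to decompose the residual $\langle \del E(z), v\rangle$ according to the sources of error in the predictor $z$, and estimate each contribution separately. The predictor $z = \sum_{(C,s)\in\D} s(\yh + \Pi_R u)\circ G^C$ combines two approximations: the pure linear-elasticity field $\sum s\,\yh\circ G^C$ (which is not an exact equilibrium because of the lattice), and the truncated core correctors $\Pi_R u$ (which are not the exact correctors). The natural strategy is to compare $z$ against the field $z_0 := \sum_{(C,s)\in\D} s(\yh+u)\circ G^C$ built from the \emph{untruncated} correctors, and to exploit that each summand $s(\yh+u)\circ G^C$ is, by {\bf (STAB)}, an exact equilibrium of $E$ in isolation.

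\medskip

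First I would write $\langle \del E(z),v\rangle = \sum_{b\in\Bonds}\psi'(Dz_b)\,Dv_b$ and add and subtract the single-dislocation fields. For a single core centred at $C$, strong stability gives $\del E\big(s(\yh+u)\circ G^C\big)=0$, so the residual measures only the \emph{interaction} between distinct cores together with the \emph{truncation} error. Concretely, I would split the estimate into: (i) a linear-elasticity interaction term, controlled by Lemma~\ref{th:residual_sum_yhat}, which after the substitution gives $|g_b|\lesssim \sum_{(C,s)\in\D}\dist(b,C)^{-3}$; and (ii) a truncation term arising from replacing $u$ by $\Pi_R u$, controlled by Lemma~\ref{th:trunc_est}, which contributes $\|D\Pi_R^C(u\circ G^C)-D(u\circ G^C)\|_{\ell^2}\leq \gamma_2 R^{-1}$ per core, together with the regularity estimate $|Du_b|\lesssim \dist(b,C)^{-2}$ of Lemma~\ref{th:reg_corrector} for the tails that are cut off. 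With $R = L_\D/5$ the truncation radius is comparable to the core separation, so both the interaction tails and the truncated tails are evaluated at distances $\gtrsim L_\D$ from the relevant cores.

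\medskip

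The estimate then reduces to bounding, in the dual norm $\|\cdot\|_{\Hsi(\L)^*}$, a sum of the form $\sum_{(C,s)\in\D}\big(\text{contributions concentrated at distance}\gtrsim L_\D\big)$. By Cauchy--Schwarz, $\langle\del E(z),v\rangle\leq \|g\|_{\ell^2}\|Dv\|_{\ell^2}$, so it suffices to show $\|g\|_{\ell^2(\Bonds)}\lesssim L_\D^{-1}$. The crux is a decay-summability computation: from $|g_b|\lesssim\sum_{(C,s)\in\D}\dist(b,C)^{-3}$ one must show $\sum_{b}\big(\sum_C \dist(b,C)^{-3}\big)^2\lesssim N^2 L_\D^{-2}$. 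Using $L_\D\geq L_0$ to guarantee the choice of $R$ and the uniqueness of $\alpha\in[Dz]$ (as in Lemma~\ref{th:residual_sum_yhat}), the dominant cross terms in the square are suppressed by the separation $L_\D$, and each diagonal term $\sum_b \dist(b,C)^{-6}$ converges to an $R$-dependent bound of order $R^{-2}\sim L_\D^{-2}$ after accounting for the fact that the small-distance contributions near each core cancel by symmetry (the evenness of $\psi$ kills the $\psi'$ and $\psi'''$ terms, as in Lemma~\ref{th:residual_sum_yhat}).

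\medskip

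\textbf{The main obstacle} will be handling the interaction of the $N$ cores cleanly: one must verify that the residual genuinely decays like $L_\D^{-1}$ rather than merely $L_\D^{-1/2}$ or with an $N$-dependent blow-up, which requires the summation $\sum_b\dist(b,C)^{-6}$ near each fixed core to be uniformly bounded and the cross-core terms at mutual distance $\geq L_\D$ to be genuinely negligible. The key technical point is that, because $|g_b|$ decays like $\dist(b,C)^{-3}$ (cubic, from the cancellation of lower-order Taylor terms via evenness), the $\ell^2$ norm of $g$ over all of $\Bonds$ is dominated by an annular sum that is finite and scales correctly; the truncation error from Lemma~\ref{th:trunc_est} is already $O(R^{-1})=O(L_\D^{-1})$ and matches. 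Assembling these with the constant $c=c(N)$ absorbing the finite number of cores completes the bound $\|\del E(z)\|_{\Hsi(\L)^*}\leq cL_\D^{-1}$.
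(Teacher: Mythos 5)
Your high-level plan identifies the right ingredients --- assumption \textbf{(STAB)} to kill the single-core residual, Lemma \ref{th:residual_sum_yhat} for the far-field linear-elasticity stress, Lemma \ref{th:trunc_est} for the truncation error --- and this matches the paper's strategy in spirit. However, the proposed execution has a genuine gap at its stated ``crux''. You reduce everything to showing $\|g\|_{\ell^2(\Bonds)}\lesssim L_\D^{-1}$ for a $g$ satisfying $|g_b|\lesssim\sum_{(C,s)\in\D}\dist(b,C)^{-3}$ on all of $\Bonds$. That reduction fails: the sum $\sum_{b\in\Bonds}\dist(b,C)^{-6}$ is dominated by bonds within $O(1)$ of the core and is a constant of order one, not $O(L_\D^{-2})$. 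Your proposed rescue --- that the near-core contributions ``cancel by symmetry'' via the evenness of $\psi$ --- conflates two mechanisms. Evenness is what upgrades the \emph{far-field} decay of $g_b$ from $\dist^{-1}$ to $\dist^{-3}$ in Lemma \ref{th:residual_sum_yhat}; it does nothing to make the residual small near a core. There the smallness comes from $\del E(\yh+u)=0$, which is a statement about a \emph{functional} vanishing, not a pointwise bound on a stress, so it cannot simply be subtracted bond-by-bond from $g$. Moreover Lemma \ref{th:residual_sum_yhat} applies to the pure superposition $\sum\yh\circ H^C$, whereas near the cores $z$ contains the truncated correctors, so the lemma's conclusion is not even available there.

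The missing device is a partition of the \emph{test function}, not just of the predictor: the paper sets $v^i:=\Pi_r^{C^i}v$ with $r=2(R+1)$ and $v^0:=v-\sum_i v^i$, and estimates $\<\del E(z),v^0\>$, $\<\del E(z)-\del E(y^i),v^i\>$ and $\<\del E(y^i)-\del E(\yh^i+u\circ G^i),v^i\>$ separately. Only on $\supp(v^0)$, where every bond satisfies $\dist(b,C^i)\gtrsim L_\D$, does the tail sum $\sum_{\dist(b,C)\geq r/2-1}\dist(b,C)^{-6}\lesssim r^{-4}$ deliver the $r^{-2}$ rate; near core $i$ the dominant contribution is the interaction term, obtained by Taylor expanding $\psi''$ around $Dy^i_b$ and using that $\sum_{j\neq i}D\yh^j_b=O(L_\D^{-1})$ there (with a further divergence-free subtraction to handle the $\ell^2$-borderline piece $\psi''(0)\sum_{j\neq i}D\yh^j_bDv^i_b$); and the truncation term is $O(R^{-1})$ by Lemma \ref{th:trunc_est}. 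Without localising the test function you cannot convert \textbf{(STAB)} into a usable near-core estimate, and the global Cauchy--Schwarz bound you propose does not close.
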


\begin{proof}
  Enumerate the elements of $\D$ as $(C^i,s^i)$ where $i=1,\ldots,N$.
  Setting $G^i:=G^{C^i}$, let $y^i := (\yh + \Pi_R u) \circ G^i$ and
  $\yh^i := \yh \circ G^i$.
  Let $r := 2 (R+1) = 2 (L_\D/5+1)$ and $v$ be any test function in
  $\Hsi(\L)$. Define
  \begin{equation*}
    v^i:=\Pi^{C^i}_{r} \! v\quad\text{for }i=1,\ldots,N,\qquad\text{and} \qquad v^0 := v -
    \sum_{i = 1}^N v^i.
  \end{equation*}
  Lemma \ref{th:trunc_est} implies that $\|Dv^i\|_{2} \lesssim \| Dv
  \|_2$ for $i = 0, \dots, N$.

  Assumption {\bf(STAB)} implies $\del E(\yh+u) = 0$, so we may decompose the
  residual into
  \begin{align}
    \notag
    \<\del E(z),v\> &= \sum_{i = 0}^N \<\del E(z),v^i\>,\\
    \notag
    &= \b\<\del E(z),v^0 \b\>+\sum_{i\neq0}\b\<\del E(z)
    -\del E(y^i),v^i \b\>\\
    \notag
    &\qquad\qquad+\sum_{i\neq0}\b\<\del E(y^i)
    -\del E\b(\yh^i+u\circ G^i\b),v^i \b\> \\
    &=: {\rm T}_1 + {\rm T}_2 + {\rm T}_3.
    \label{eq:inf:pred_prf:decomp}
  \end{align}

  {\it The term ${\rm T}_1$: } Employing Lemma
  \ref{th:residual_sum_yhat}, and using the fact that $z = \sum_{i =
    1}^N \yh \circ G^i$ in ${\rm supp}(v^0)$ we obtain that
  \begin{align}
    \notag
    \b|{\rm T}_1\b| &= \b|\< \del E(z), v^0 \>\b| \leq \sum_{b \in
      \Bonds} |g_b| |Dv^0_b|
    \lesssim \sum_{b \in \Bonds} \sum_{i = 1}^N \dist(b, C^i)^{-3}
    |Dv^0_b| \\
    &\lesssim \sum_{i = 1}^N \bg( \sum_{\substack{b \in \Bonds \\
        \dist(b, C^i) \geq r/2-1}}\dist(b, C^i)^{-6} \bg)^{1/2}
    \|Dv^0\|_2  \lesssim r^{-2} \| D v\|_2.
    \label{eq:inf:predprf:T1}
  \end{align}

  {\it The term ${\rm T}_2$: } Here, we have $z- y^i = \sum_{j \neq i}
    \yh^j$ in the support of $v^i$. We expand
  \begin{align}
    \b\<\del E(z)-\del E\b(y^i\b),v^i\b\> &=
    \sum_{b \in \Bonds} \psi''(s_b) \sum_{j \neq i} D \yh^j_b Dv^i_b \notag
      \\
    &= \psi''(0) \sum_{j \neq i} \sum_{b \in \Bonds} D \yh^j_b Dv^i_b
    + \sum_{b \in \Bonds} h_b Dv^i_b,\label{eq:inf:predprf:T2split}
  \end{align}
  where $|s_b| \lesssim (1+\dist(b, C^i))^{-1}$ and 
  \begin{displaymath}
    |h_b| = \B|\b(\psi''(s_b) - \psi''(0)\b) \sum_{j \neq i} D
    \yh^j_b\B| \lesssim (1+\dist(b, C^i))^{-2}  L_\D^{-1}.
  \end{displaymath}
  We have Taylor expanded and used the evenness of $\psi$ to arrive at
  the estimate on the right.
  The first group of terms in \eqref{eq:inf:predprf:T2split} can be
  estimated as in (\ref{eq:inf:predprf:T1}) to obtain $|\sum_{b \in \Bonds}
  D \yh^j_b Dv^i_b| \lesssim L_\D^{-2}\|Dv\|_2 $ for all $j \neq i$. For the 
  second group in \eqref{eq:inf:predprf:T2split}, we have
  \begin{displaymath}
    \B|\sum_{b \in \Bonds} h_b Dv^i_b\B| \lesssim L_\D^{-1}
      \bg(\sum_{\substack{b\in \Bonds \\ \dist(b, C^i) \leq r+1}}
    (1+\dist(b, C^i))^{-4} \bg)^{1/2} \| D v^i \|_2
    \lesssim L_\D^{-1}\| Dv \|_2.
  \end{displaymath}
 
  {\it The term ${\rm T}_3$: } The final group in
  (\ref{eq:inf:pred_prf:decomp}) is straightforward to estimate using
  the truncation result of Lemma \ref{th:trunc_est}, giving
  \begin{equation*}
    \b|\b\<\del E(y^i)
    -\del E(\yh^i+u\circ G^i),v^i\b\>\b| \leq \|\psi''\|_\infty \|D\Pi_Ru - Du\|_2 \|Dv^i\|_2 \lesssim R^{-1}
    \| Dv \|_2.
  \end{equation*}

  {\it Conclusion: } Inserting the estimates for ${\rm T}_1, {\rm
    T}_2, {\rm T}_3$ into (\ref{eq:inf:pred_prf:decomp}) we obtain
  \begin{equation*}
    \b|\<\del E(z), v\>\b| \lesssim \B(r^{-2} + L_\D^{-1} + R^{-1}\B)\|Dv\|_2 \lesssim L_\D^{-1} \|Dv\|_2.
    \qedhere
  \end{equation*}
\end{proof}

\subsection{Stability of the predictor}
\label{sec:inf:stab}
We proceed to prove that $\ddel E(y)$ is positive, where $y$ is the
predictor constructed in \eqref{eq:appr_sol_full}. This result employs
ideas similar to those used in the proof of \cite[Theorem 4.8]{EOS13},
modified here to an aperiodic setting and extended to cover the case of
multiple defect cores.

\begin{lemma}
\label{th:full_latt_stab}
Let $z$ be a predictor for a dislocation configuration $\D$ in $\L$, as
defined in \eqref{eq:appr_sol_full}, where $|\D|=N$. Then there exist
postive constants $R_0 = R_0(N)$ and $L_0 = L_0(N)$ such that if
$R\geq R_0$ and $L_\D\geq L_0$, there exists $\lLR \geq \lambda_d / 2$
so that
\begin{equation*}
  \<\del^2 E(z)v,v\>\geq \lLR\|Dv\|_2^2 \qquad \text{for all } v \in \Hsi(\L).
\end{equation*}
\end{lemma}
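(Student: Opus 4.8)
The plan is to prove stability of the multi-dislocation predictor by localizing the second variation onto the individual cores, reducing to the single-core stability already established in Lemma~\ref{th:disl_trunc_stab}, and absorbing the inter-core interactions as small perturbations controlled by $L_\D^{-1}$. Let me think carefully about how the localization should work.

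We have $z = \sum_{(C,s)\in\D} s(\yh + \Pi_R u)\circ G^C$. The key observation is that $\psi''$ is even, so near the $i$-th core the argument $Dz_b$ is dominated by the single-core contribution $D((\yh+\Pi_R u)\circ G^i)_b$, and far from all cores $Dz_b$ is small so $\psi''(Dz_b)\approx\psi''(0)\geq\lambda_d$. The strategy is: partition each test function $v\in\Hsi(\L)$ using the truncation operators as $v=v^0+\sum_{i=1}^N v^i$ with $v^i=\Pi^{C^i}_r v$ (as in Lemma~\ref{th:full_latt_cons}), so that $v^i$ is localized near the $i$-th core and $v^0$ lives away from all cores.

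First I would write $\langle\del^2 E(z)v,v\rangle=\sum_{b\in\Bonds}\psi''(Dz_b)(Dv_b)^2$ and use the partition of unity to compare this to $\sum_i\langle\del^2 E(z)v^i,v^i\rangle$ plus cross terms. The plan is to bound the second variation from below by comparing, bond by bond, $\psi''(Dz_b)$ with the single-core second derivative $\psi''(D((\yh+\Pi_R u)\circ G^i)_b)$ on the support of each $v^i$: since on $\supp(Dv^i)$ the difference $Dz_b-D((\yh+\Pi_R u)\circ G^i)_b=\sum_{j\neq i}sD(\yh\circ G^j)_b$ is bounded by $\sum_{j\neq i}\dist(b,C^j)^{-1}\lesssim L_\D^{-1}$, Taylor expansion of $\psi''$ gives $|\psi''(Dz_b)-\psi''(D((\yh+\Pi_R u)\circ G^i)_b)|\lesssim\|\psi'''\|_\infty L_\D^{-1}$. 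Invoking Lemma~\ref{th:disl_trunc_stab} on the single-core predictor $\yh+\Pi_R u$ (transplanted via $G^i$, using invariance of norms under lattice automorphisms) yields $\langle\del^2 E(z)v^i,v^i\rangle\geq(\ldR-cL_\D^{-1})\|Dv^i\|_2^2$. On the region covered by $v^0$ we have $Dz_b$ small, so $\psi''(Dz_b)\geq\psi''(0)-\|\psi'''\|_\infty|Dz_b|\geq\lambda_d-cL_\D^{-1}$, handling that piece directly.

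The main obstacle is controlling the cross terms and the overlap error in the partition $v=v^0+\sum v^i$, because $\sum_i\|Dv^i\|_2^2$ is not exactly $\|Dv\|_2^2$: the truncation operators overlap on their transition annuli $A_r$, and $\|Dv\|_2^2$ versus $\sum_{i=0}^N\|Dv^i\|_2^2$ must be reconciled without losing a factor that destroys the lower bound. Here I would exploit that the cores are separated by at least $L_\D$ while the truncation radius is $r=2(R+1)\sim L_\D/5$, so the supports of $Dv^i$ for distinct $i$ are \emph{disjoint} and disjoint from a neighborhood of each core, making the cross terms $\langle\del^2 E(z)v^i,v^j\rangle$ for $i\neq j$ vanish identically and the interpolation between $\|Dv\|_2^2$ and $\sum\|Dv^i\|_2^2$ exact up to controllable annular errors. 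The careful bookkeeping is to show $\sum_{i=0}^N\|Dv^i\|_2^2\leq(1+cL_\D^{-1})\|Dv\|_2^2$ and conversely that the localized lower bounds reassemble to $\langle\del^2 E(z)v,v\rangle\geq(\ld-\eps_R-cL_\D^{-1})\|Dv\|_2^2$. Choosing $R_0,L_0$ large enough that $\eps_R+cL_\D^{-1}\leq\ld/2$ then gives $\lLR\geq\ld/2$, completing the proof.
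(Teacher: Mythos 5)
There is a genuine gap, and it sits exactly where you flag ``the main obstacle.'' Your decomposition $v=v^0+\sum_{i=1}^N v^i$ with $v^i=\Pi^{C^i}_r v$ does make the supports of $Dv^i$ and $Dv^j$ disjoint for distinct $i,j\geq 1$, but the supports of $Dv^0$ and $Dv^i$ necessarily overlap on the cutoff annulus $x^{C^i}+A_r$, so the cross terms $\<\del^2E(z)v^0,v^i\>$ do \emph{not} vanish identically, and neither does the norm defect $\|Dv\|_2^2-\sum_{i=0}^N\|Dv^i\|_2^2=-2\sum_i\<Dv^0,Dv^i\>_{\ell^2}$. These quantities are \emph{not} $O(L_\D^{-1})\|Dv\|_2^2$ for a fixed truncation radius: take $v$ with all of its Dirichlet energy concentrated on the annulus $B_r(x^{C^1})\setminus B_{r/2+1}(x^{C^1})$; then $D\Pi_r^{C^1}v$ picks up the term $D\eta(\cdot/r)\,(v-a^{C^1}_r)$, whose $\ell^2$ norm is of order $r^{-1}\|v-a^{C^1}_r\|_{\ell^2(A_r)}\lesssim\|Dv\|_{\ell^2(A_r)}$ by scaling --- an order-one fraction of $\|Dv\|_2$, no matter how large $r$ is. So the claimed bound $\sum_{i=0}^N\|Dv^i\|_2^2\leq(1+cL_\D^{-1})\|Dv\|_2^2$ (and the reassembly of the localized lower bounds with only an $O(L_\D^{-1})$ loss) is false as stated, and the argument does not close. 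The bond-wise comparison of $\psi''(Dz_b)$ with the single-core Hessian on $\supp(Dv^i)$, and the far-field estimate $\psi''(Dz_b)\geq\psi''(0)-cL_\D^{-1}\geq\ld-cL_\D^{-1}$, are both fine; the localization bookkeeping is the missing piece.

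The paper circumvents precisely this difficulty by arguing by contradiction: it takes a sequence of configurations $\D^n$ with $L_{\D^n}\to\infty$ and near-minimizing test functions $v^n$, and invokes the concentration result \cite[Lemma 4.9]{EOS13}, which supplies \emph{adaptively chosen} radii $r^n\to\infty$ such that $w^{n,i}=\Pi^{C^{n,i}}_{r^n}v^n$ converges strongly after recentering while $(v^n-w^{n,i})\circ H^{n,i}\rightharpoonup 0$. Under this weak convergence the annular interaction terms $\delta^{n,i}=\sum_b Dw^{n,0}_bDw^{n,i}_b$ and the cross terms $\<\del^2E(z^n)w^{n,0},w^{n,i}\>$ tend to zero \emph{along the sequence}, which is exactly the qualitative substitute for the uniform quantitative control your sketch asserts but cannot have. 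If you want to keep a direct, non-compactness proof, you would need a genuinely different localization (e.g.\ an IMS-type identity with a squared partition of unity and an argument absorbing the annular commutator $|D\eta|^2|v-a|^2$ into the far-field coercivity, using that $\psi''(Dz_b)\approx\psi''(0)$ on the annulus); that is not a routine repair, and it is not what you have written.
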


\begin{proof}
Lemma \ref{th:disl_trunc_stab} implies the existence of $R_0$ such that
$\ldR\geq3\ld/4>0$ for all $R\geq R_0$, thus we choose a truncation radius
$R\geq R_0$ which will remain fixed for the rest of the proof.
We now argue by contradiction. Suppose that there exists no $L_0$
satisfying the statement; it follows that there exists $\D^n$, a sequence of
dislocation configurations such that
\begin{enumerate}
  \item $N:=|\D^n|$, $\b|\b\{(C,+1)\in\D^n\b\}\b|$ and 
    $\b|\b\{(C,-1)\in\D^n\b\}\b|$ are constant,
  \item $L^n:=L_{\D^n}\to\infty$ as $n\to\infty$ and
  \item $\ddel E(z^n)<\ld/2$ for all $n$, where $z^n$ is the approximate
    solution corresponding to the configuration $\D^n$ in $\L$ with
    truncation radius $R$, as defined in \eqref{eq:appr_sol_full}.
\end{enumerate}
The first condition may be assumed without loss of generality by taking 
subsequences. We enumerate the elements $(C^{n,i},s^{n,i})$ of $\D^n$, and
write $G^{n,i}:=G^{C^{n,i}}$ and $H^{n,i}:=H^{C^{n,i}}$. By translation
invariance and the fact that $\psi$ is even, we may assume without further
loss of generality that $(C^{n,1},s^{n,1})=(C_0,+1)$. For each $n$,
\begin{equation*}
  \lambda_n:=\inf_{\substack{v\in\Hsi(\L)\\\|Dv\|_2 =1}}
    \<\ddel E(z^n)v,v\><\ld/2
\end{equation*}
exists since, for any $z\in\Us(\L)$, $\ddel E(z)$ is a bounded bilinear
form on $\Hsi(\L)$. Let $v^n\in\Usz(\L)$ be a sequence of test functions
such that $\|Dv^n\|_2=1$ and
\begin{equation*}
  \lambda_n\leq \<\ddel E(z^n)v^n,v^n\>\leq \lambda_n + n^{-1}.
\end{equation*}
Since $v^n$ is bounded in $\Hsi(\L)$, it has a weakly convergent subsequence.
By the translation invariance of the norm and taking further subsequences
without relabelling, we further assume that
$\bar{v}^{n,i}:=v^n\circ H^{n,i}$ weakly converges for each $i$.
We now employ the result of \cite[Lemma 4.9]{EOS13}. This states that there 
exists a sequence of radii, $r^n\to\infty$, for which we may also assume
$r^n\leq L^n/3$, so that for each $i=1,\ldots,N$,
\begin{equation*}
  w^{n,i}:=\Pi^{C^{n,i}}_{r^n}v^n\quad\text{satisfies}\quad
    w^{n,i}\circ H^{n,i}\to\bar{w}^i\quad\text{and}\quad
    (v^n-w^{n,i})\circ H^{n,i}\wto0\quad\text{in }\Hsi(\L).
\end{equation*}

Writing $\bar{w}^{n,i} := w^{n,i}\circ H^{n,i}$, and defining
$w^{n,0}:=v^n-\sum_{i=1}^Nw^{n,i}$, it follows that
\begin{equation*}
  \<\ddel E(z^n)v^n,v^n\> = \sum_{i,j=0}^N\<\ddel E(z^n)w^{n,i},w^{n,j}\>
   =\sum_{i=0}^N\<\ddel E(z^n)w^{n,i},w^{n,i}\>
    +2\sum_{i=1}^N\<\ddel E(z^n)w^{n,0},w^{n,i}\>,
\end{equation*}
where, by choosing $r^n\leq L^n/3$, we have ensured that $\supp\{w^{n,i}\}$ 
for $i=1,\ldots,N$ only overlaps with $\supp\{w^{n,0}\}$, and hence all
other `cross--terms' vanish. For $i=1,\ldots,N$,
\begin{align}
  \<\ddel E(z^n)w^{n,i},w^{n,i}\>&=\<[\ddel E(z^n\circ H^{n,i})
    -\ddel E(\yh+\Pi_Ru)]\bar{w}^{n,i},\bar{w}^{n,i}\>
    +\<\ddel E(\yh+\Pi_Ru)\bar{w}^{n,i},\bar{w}^{n,i}\>,\notag\\
  &\geq  \B(\ldR-\smfrac{N\,\|\psi'''\|_\infty}{2L^n/3}\B)\|Dw^{n,i}\|_2^2.
    \label{eq:full_latt_stab_1}
\end{align}
For the $i=0$ term, we have
\begin{align}
  \<\ddel E(z^n)w^{n,0},w^{n,0}\>&=\<[\ddel E(z^n)
    -\ddel E(0)]w^{n,0},w^{n,0}\>
    +\<\ddel E(0)w^{n,0},w^{n,0}\>,\notag\\
  &\geq \B(\psi''(0)-\smfrac{N\|\psi'''\|_\infty}{r^n}\B)\|Dw^{n,0}\|_2^2.
    \label{eq:full_latt_stab_2}
\end{align}
For the cross--terms, Since we assumed that $r^n\leq L^n/3$,
we deduce that
\begin{equation*}
  \<\ddel E(z^n)w^{n,0},w^{n,i}\>=\<\ddel E(z^n)(v^n-w^{n,i}),w^{n,i}\>.
\end{equation*}
Using the translation invariance of $E$, and adding and subtracting terms,
we therefore write
\begin{align*}
  \b\<\ddel E(z^n)w^{n,0},w^{n,i}\b\>&=\b\<\b[\ddel E(z^n\circ H^{n,i})
    -\ddel E(\yh+\Pi_Ru)\b](\bar{v}^{n,i}-\bar{w}^{n,i}),\bar{w}^{n,i}\b\>\\
  &\qquad+\b\<\ddel E(\yh+\Pi_Ru)(\bar{v}^{n,i}-\bar{w}^{n,i}),
    \bar{w}^{n,i}-\bar{w}^i\b\>\\
  &\qquad\qquad+\b\<\ddel E(\yh+\Pi_Ru)(\bar{v}^{n,i}-\bar{w}^{n,i}),
   \bar{w}^i\b\>,\\
  &=:{\rm T}_1+{\rm T}_2+{\rm T}_3.
\end{align*}
Estimating the first two terms on the right hand side, we obtain:
\begin{gather*}
  {\rm T}_1\leq \smfrac{N\,\|\psi'''\|_\infty}{2L^n/3}\B(\|Dv^{n,i}\|_2
   +\|Dw^{n,i}\|_2\B)\|Dw^{n,i}\|_2 \leq\smfrac{N\,\|\psi'''\|_\infty}{L^n/3}
   \qquad\text{and}\\
  {\rm T}_2\leq\|\psi''\|_\infty\B(\|Dv^{n,i}\|_2
   +\|Dw^{n,i}\|_2\B)\|D\bar{w}^{n,i}-D\bar{w}^i \|_2,
\end{gather*}
both of which converge to 0 as $n\to\infty$.
Since $\bar{v}^{n,i}-\bar{w}^{n,i}\wto0$ as $n\to\infty$, it follows that
${\rm T}_3\to0$ as well, and hence
\begin{equation}
  \<\ddel E(z^n)w^{n,0},w^{n,i}\>\to0\label{eq:lb_xterms}
\end{equation}
as $n\to\infty$ for each $i$. Putting \eqref{eq:lb_xterms} and the result of
Lemma \ref{th:0_glob_stab} together with \eqref{eq:full_latt_stab_1} and
\eqref{eq:full_latt_stab_2},
\begin{equation}
  \<\ddel E(z^n)v^n,v^n\>\geq (\ldR-\eps^n)\sum_i\|Dw^{n,i}\|_2^2 + \eps^n,
\end{equation}
where $\eps^n\to0$ as $n\to\infty$. All that remains is to verify that
\begin{equation}
  \liminf_{n\to\infty} \B(\sum_i\|Dw^{n,i}\|_2^2-\|Dv^n\|_2^2\B)\geq 0.
    \label{eq:liminf_norms}
\end{equation}
By definition, $\sum_i |Dw^{n,i}_b|^2 \neq |Dv^n_b|^2$ only when $b\in
\supp\{Dw^{n,i}\}\cap\supp\{Dw^{n,0}\}$ for some $i=1,\ldots,N$.
In such cases,
\begin{equation*}
  |Dw^{n,0}_b|^2+|Dw^{n,i}_b|^2-|Dv^n_b|^2 = -2\,Dw^{n,0}_bDw^{n,i}_b.
\end{equation*}
Therefore, consider
\begin{align*}
  \delta^{n,i}&:=\sum_{b\in\Bonds}Dw^{n,0}_bDw^{n,i}_b
    =\sum_{b\in\Bonds}\b(Dw^{n,0}\circ H^{n,i}\b)_b
      \b(D\bar{w}^{n,i}_b-D\bar{w}^i_b\b)
    +\sum_{b\in\Bonds}\b(Dw^{n,0}\circ H^{n,i}\b)_bD\bar{w}^i_b.
\end{align*}
Since $w^{n,0}\circ H^{n,i}\wto0$ and $\bar{w}^{n,i}\to \bar{w}^i$, it follows
that $\delta^{n,i}\to0$, and thus \eqref{eq:liminf_norms} holds. Further,
\begin{equation*}
  \lambda_n + n^{-1} \geq \<\ddel E(z^n)v^n,v^n\>
    \geq(\ldR-\eps^n)\B(1-\sum_i\delta^{n,i}\B)+\eps^n,
\end{equation*}
and so for $n$ sufficiently large, it is clear that
$\lambda_n\geq2\ldR/3\geq\ld/2>0$, which contradicts the
assumption that $\lambda_n<\ld/2$ for all $n$.
\end{proof}

\subsection{Conclusion of the proof of Theorem \ref{th:full_latt},
  Case \texorpdfstring{$\Om = \L$}{Omega=Lambda}}
\subsubsection{Proof of (2)}
\label{sec:inf:conclusion2}
Lemma \ref{th:full_latt_cons} and Lemma \ref{th:full_latt_stab} now enable 
us to state that there exist $L_0$ and $R_0$ depending only on $N=|\D|$ such
that whenever $\D$ satisfies $L_\D\geq L_0$, $R\geq R_0$, and $z$ is an approximate solution corresponding to $\D$ with truncation radius $R$,
\begin{equation*}
  \lLR\geq \mu:=\frac\ld2>0,\qquad\text{and}\qquad
    \|\del E(z)\|< r:=\min\bg\{\frac{c\ld}{4L_\D},
    \frac{\ld^2}{16\,\|\psi'''\|_\infty}\bg\}.
\end{equation*}
We note that
\begin{equation*}
  \|\ddel E(z+u)-\ddel E(z+v)\|\leq \|\psi'''\|_\infty \|Du-Dv\|_2,
\end{equation*}
so setting $M := \|\psi'''\|_\infty$,
we may apply Lemma \ref{th:inv_func_theorem}, since
$\frac{2Mr}{\mu^2}\leq\smfrac12<1$.
It follows that there exists
$w\in\Hsi(\L)$ with $\|Dw\|_2\leq c'L_\D^{-1}$ such that
\begin{equation*}
  \del E(z+w)=0,\qquad
    \<\ddel E(z+w)v,v\>\geq\frac\ld4\|Dv\|_2^2,
\end{equation*}
and so $z+w$ is a strongly stable equilibrium. The constant $c'$ depends
only on $N$, the number of dislocation cores, establishing item (2) of
Theorem \ref{th:full_latt}.

\subsubsection{Proof of (1)}
\label{sec:inf:conclusion1}
We begin by increasing $R_0$ if necessary to ensure that
$\frac{N}{2\pi R_0}\leq\frac14$. Suppose that $z$ is a predictor for a 
configuration $\D$ in $\L$ satisfying $L_\D\geq L_0$ and $R\geq R_0$.
If $\alpha\in[Dz]$, by increasing $R_0$, we have ensured that
\begin{equation*}
  \alpha_b\in[-\smfrac14,\smfrac14]\qquad\text{for any }b\notin
    \bigcup_{(C,s)\in\D}\supp\{D\Pi_R^Cu\},
\end{equation*}
and furthermore
\begin{equation*}
 \alpha_b = \sum_{(C,s)\in\D} s(\alh\circ G^C)_b \qquad
    \text{for any }b\notin\bigcup_{(C,s)\in\D}\supp\{D\Pi_R^Cu\}.
\end{equation*}
Let $\alpha'\in[Dz+Dw]$, and so if $L_\D > 4c'$, where $c'$ is the constant
arising in the proof of (2), $z+w$ is a strongly stable local
equilibrium such that $\|Dw\|_\infty\leq\|Dw\|_2<\smfrac14$.
When $b\notin\supp\{D\Pi_R^{C}u\}$ for any $(C,s)\in\D$, this choice
entails that $\alpha'_b = \alpha_b+Dw_b$.

Taking $A$ to be a collection of positively-oriented cells such that $B_R(0)
\subset \mathrm{clos}(A)\subset B_{L_\D/2}(0)$ and 
setting $A^C:=H^C(A)$,
\begin{gather*}
  \int_{\partial A^{C'}}\alpha'
    = \int_{\partial A^{C'}}\sum_{(C,s)\in\D}s\,(\alh\circ G^C) +Dw
    = s'\qquad\text{for any }(C',s')\in\D,\qquad\text{and}\\
  \int_{\partial C} \alpha' = 0\qquad
    \text{for any}\quad C\notin\bigcup_{(C,s)\in\D} A^C,\quad
    \text{implying}\quad\Cores[\alpha']\subset\bigcup_{(C,s)\in\D} H^C(A).
\end{gather*}

\subsubsection{Proof of (3)}
We divide the proof of statement (3) into two cases: $B[z+w]=0$,
and $|B[z+w]|>1$.

Suppose $z+w$ is a strongly stable equilibrium such that $B[z+w]=0$, 
arising from statement (2) of Theorem \ref{th:full_latt}.
It follows that $|\{(C,1)\in\D\}|=|\{(C,-1)\in\D\}|$, so
enumerating pairs $(C^i_+,1),(C^i_-,-1)\in\D$, we define
\begin{equation}
  v^i(x):=\smfrac1{2\pi}\b[\arg\b(x-x^{C^i_+}\b)-\arg\b(x-x^{C^i_-}\b)\b],
    \quad\text{and}\quad v(x):=\sum_iv^i(x),\label{eq:dip_constr}
\end{equation}
where $v^i$ is a function with a branch cut of finite length. As for 
approximate solutions $z$, we may extend $\nabla v^i$ to a function
which is $\CC^\infty(\R^2\setminus\{x^{C^i_+},x^{C^i_-}\};\R^2)$.
It may then be directly verified that $|\nabla v^i(x)| \lesssim |x|^{-2}$
for $|x|$ suitably large, and hence when $v^i$ is understood as a function
in $\Us(\L)$, it follows that $v^i\in\Hsi(\Om)$.

It may now be checked that $Dz_b-Dv_b\in\Z$ for all $b\in\Bonds$, and hence
\begin{equation*}
  E(z-v;z+w) = E(0;z+w) =-E(z+w;0) <0,
\end{equation*}
as Lemma \ref{th:0_glob_stab} implies that $0=\argmin_{u\in\Hsi(\Om)} E(u;0)$,
which contradicts the assumption that $z+w$ was a globally stable equilibrium.
\medskip

If $|B[z+w]|>1$, then without loss of generality, we suppose $B[z+w]>1$. We
will only consider the case where $B[z+w]=2$ here, leaving the general case
for the interested reader. Suppose for contradiction that $z+w$ is
a strongly stable equilibrium given by (2) in Theorem \ref{th:full_latt}
with $B[z+w]=2$, and that $z+w$ is additionally globally stable. If true, then any configuration of the form
\begin{equation}
  y = \yh+\yh\circ G^C \label{eq:inf:counterex_func}
\end{equation}
must satisfy $E(y;z+w)\geq0$, since by a similar argument to that used in the
previous case, we may define $y$ such that $y-z\in\Hsi(\L)$.
Our strategy is to construct a sequence $y^n$ of the form 
\eqref{eq:inf:counterex_func} such that $E(y^{n+1};y^n)\leq -C<0$,
and hence prove a contradiction. To that end, define a sequence of
cells $C^n$ such that $x^{C^n}$ lies on the positive $x$-axis for all $n$, with
\begin{equation*}
   \dist(C^n,C^{n+1})<\dist(0,C^n),\quad\dist(0,C^0)\geq K\quad\text{and}
     \quad\dist(C^{n-1},C^n)\geq K,
\end{equation*}
where $K$ is a parameter we will choose later. Our choice of $y^n$ is then
\begin{equation*}
  y^n:=\yh+\yh\circ G^{C^n}.
\end{equation*}
If $K$ is sufficiently large, we note that $\alpha^n\in[Dy^n]$ is unique. 
Letting $v^n:=y^n-y^{n-1}$, decompose $Dv^n=\beta^n+Z^n_b$,
where $\beta^n = \alpha^n_b-\alpha^{n-1}_b$, and $Z^n_b=Dv^n_b-\beta^n_b$
has support only on bonds crossing the $x$-axis between $x^{C^{n-1}}$ and
$x^{C^n}$.

We consider $E(y^{n-1};y^n)=-E(y^n;y^{n-1})$. It may be checked directly
that $\beta^n\in\ell^2(\Bonds)$ for any $n$, and is uniformly bounded
in $\ell^p(\Bonds)$ for any $p>2$. Using these facts, the decay of $\alh_b$,
and Taylor expanding, we obtain that for any $\eps>0$, there exists a
constant $C$ depending only on $\psi$ and $\eps$ such that
\begin{align*}
  E(y^{n-1};y^n) &= \sum_{b\in\Bonds}\psi(\alpha^{n-1}_b)-\psi(\alpha^n_b)\\
    &= \sum_{b\in\Bonds}\psi(\alpha^n_b-\beta^n_b)-\psi(\alpha^n_b)
    -\psi'(\alpha^n_b)(-\beta^n_b)+\<\del E(y^n),-Dv^n\>
    -\sum_{b\in\Bonds}\psi'(\alpha^n_b)(-Z^n_b),\\
  &\geq \smfrac12(\psi''(0)-\eps)\|\beta^n\|_2^2-C
    -\<\del E(y^n),Dv^n\>+\sum_{b\in\Bonds}\psi'(\alpha^n_b)Z^n_b.
\end{align*}
Employing the result of Lemma \ref{th:residual_sum_yhat}, we find that we may
write
\begin{equation*}
  -\<\del E(y^n),Dv^n\>+\sum_{b\in\Bonds}\psi'(\alpha^n_b)Z^n_b
    = \sum_{b\in\Bonds}-g^n_b \beta^n_b + h^n_b Z^n_b\geq -\|g^n\|_2\|\beta^n\|_2
      +\sum_{b\in\Bonds} h^n_bZ^n_b.
\end{equation*}
It may be verified that $\|g^n\|_2$ is uniformly bounded in $n$, using the 
properties demonstrated in Lemma \ref{th:residual_sum_yhat}, and that
$Z^n_b$ is negative on bonds of the form $(\xi,\xi+a_2)$
or $(\xi,\xi+a_3)$ crossing the $x$-axis. Since by assumption
$\dist(C^{n-1},C^n)<\dist(0,C^{n-1})$, $h^n_b$ is negative for all bonds
in $\supp\{Z^n\}$ --- in particular, these assertions imply that
\begin{equation*}
  \sum_{b\in\Bonds}h^n_bZ^n_b\geq0,\quad\text{and so}\quad
  E(y^{n-1};y^n)\geq c_0 \|\beta^n\|_2^2-c_1
\end{equation*}
for some constants $c_0,c_1>0$ which depend only on $\psi$.
Applying Jensen's inequality to $\beta^n$ on a series of closed curves around
$C^n$, we find that
\begin{equation*}
  \|\beta^n\|_2^2\geq c\,\log(\dist(C^{n-1},C^n))\geq c'\log(K),
\end{equation*}
where $c,c'$ are constants depending only on the lattice,
and hence as long as $K$ is suitably large, we have that $E(y^{n-1};y^n)\geq
C\geq0$. Thus
\begin{equation*}
  E(y^k;z+w) = \sum_{n=1}^k E(y^n;y^{n-1})+ E(y^0;z+w) \leq -Ck + E(y^0;z+w),
\end{equation*}
and letting $k\to\infty$, we have a contradiction to the fact that $z+w$ is a
globally stable equilibrium.

\section{Proof for Finite Lattice Polygons}
\label{sec:fin}
As in \S\ref{sec:inf}, we construct approximate solutions and prove
estimates on the derivative and Hessian of the energy evaluated at
these points so that we may apply Lemma \ref{th:inv_func_theorem}.
The two main differences between this and the preceding analysis are
(i) $z$ defined in \ref{eq:appr_sol_full} does {\em not} satisfy the
natural boundary conditions of Laplace's equation in a finite domain,
and (ii) we must estimate residual force contributions at the boundary,
which cannot be achieved by a simple truncation argument as used in
\S\ref{th:trunc_est} --- at this stage the fact that $\Om$ has a
boundary plays a crucial role.

To obtain a predictor satisfying the natural boundary conditions we
introduce a {\em boundary corrector}, $\ycorr \in C^1(U^\Omega) \cap
C^2({\rm int}(U^\Omega))$, corresponding to a configuration
$\D$ in $\Om$ which satisfies
\begin{equation}
  -\Delta \ycorr = 0\quad\text{in }U^\Om,\qquad\nabla \ycorr\cdot\nu=
    -\sum_{(C,s)\in\D}s\nabla(\yh\circ G^C)\cdot \nu
    \quad\text{on }\partial U^\Om,
\label{eq:y_corr_prob}
\end{equation}
where $\nu$ is the outward unit normal on $\partial U^\Om$. 
\S\ref{sec:fin:corrector} is devoted to a study of this problem and its
solution.

We then define an approximate solution (predictor) corresponding
to $\D$ in $\Om$ with truncation radius $R$ as
\begin{equation}
  z := \sum_{(C,s)\in\D}s\,\b(\yh + \Pi_R u\b) \circ G^C
    +\ycorr.
    \label{eq:appr_sol_fin}
\end{equation}

\subsection{The continuum boundary corrector}
\label{sec:fin:corrector}
Here, as remarked above, we give proofs of several important facts about the
boundary corrector. Since we are considering a boundary value problem
in a polygonal domain, we use the theory developed in \cite{Grisvard}
to obtain regularity of solutions to \eqref{eq:y_corr_prob}.

Noting that the boundary corrector problem is linear, it suffices to
analyse the problem when only one positive dislocation is present at a point
$x'\in U^\Om$. We therefore consider the problem
\begin{equation}
  -\Delta \ycorr = 0\quad\text{in }U^\Om,\qquad\nabla \ycorr\cdot\nu=
    g_m\quad\text{on }\Gamma_m,
\label{eq:y_corr_prob_1disl}
\end{equation}
where as in \S\ref{sec:conv_crystals}, $\Gamma_m$ are the straight segments of
$\partial U^\Om$ between corners $(\kappa_{m-1},\kappa_m)$, $\nu$ is the
outward unit normal, and
\begin{equation*}
  g_m(s):=-\nabla\yh(s-x')\cdot\nu\qquad\text{for }s\in\Gamma_m.
    \label{eq:ycorr_bcs}
\end{equation*}
As before, by $\nabla\yh(x-x')$ we mean the extension of the gradient of
$\yh(x-x')$ to a function in $\CC^\infty\b(\R^2\setminus\{x'\}\b)$.
Since $\nu$ is constant along $\Gamma_m$, it follows that
$g_m\in\CC^\infty(\Gamma_m)$, and so applying Corollary 4.4.3.8 in
\cite{Grisvard}, it may be seen that this problem has a solution in
$\HH^2(U^\Om)$ which is unique up to an additive constant, as long
as $\int_{\partial U^\Om}g=0$. This condition may be verified
by standard contour integration techniques, for example. Furthermore,
$\ycorr$ is harmonic in the interior of $U^\Om$, and hence analytic
on the same set.

We now obtain several bounds for solutions of the problem
\eqref{eq:y_corr_prob_1disl} in terms of $\dist(x',\partial U^\Om)$, taking 
note of the domain dependence of any constants. 
The key fact used to construct these estimates is that
$\yh+\ycorr$ is a harmonic conjugate of the Green's function for the Laplacian
with Dirichlet boundary conditions on $U^\Om$.

\begin{lemma}
Suppose $U^\Om$ is a convex lattice polygon, and $\ycorr$ solves
\eqref{eq:y_corr_prob_1disl}. Then there exist constants $c_1$ and $c_2$
which are independent of the domain such that
\begin{gather}
  |\nabla \ycorr(x)|\leq c_1\,\dist(x,x')^{-1}\quad\text{for any }x\in U^\Om,
    \qquad\|\nabla\ycorr\|_\infty\leq c_1\,\dist(x',\partial U^\Om)^{-1},
    \label{eq:Dycorr_bnd}\\
  \text{and}\quad\|\nabla^2\ycorr\|_{\LL^2(U^\Om)}\leq c_3 \frac{\log(
   \dist(x',\partial U^\Om))}{\dist(x',\partial U^\Om)}.
   \label{eq:D2ycorr_bnd}
\end{gather}
\end{lemma}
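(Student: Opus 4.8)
The plan is to build everything on the key identity stated above: that $w:=\yh(\cdot-x')+\ycorr$ is, up to an additive constant, the harmonic conjugate of the Dirichlet Green's function $G(\cdot,x')$ of $U^\Om$. To see this I would first note that the natural boundary condition in \eqref{eq:y_corr_prob_1disl} is exactly $\partial_\nu w=0$ on $\partial U^\Om$. The Cauchy--Riemann relations then convert this homogeneous Neumann condition for $w$ into $\partial_\tau\tilde w=0$ along $\partial U^\Om$ for the conjugate $\tilde w$; since a convex polygon has connected boundary, $\tilde w$ is constant there. Moreover the multivalued increment of $\yh$ around $x'$ corresponds to a \emph{single-valued} logarithmic singularity $-\frac1{2\pi}\log|\cdot-x'|$ of $\tilde w$ at $x'$, so after fixing the additive constant, uniqueness of the Green's function gives $\tilde w=G(\cdot,x')$. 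Writing $\Phi(x)=-\frac1{2\pi}\log|x|$ and $H(\cdot,x'):=\Phi(\cdot-x')-G(\cdot,x')$ for the (harmonic) regular part of $G$, conjugacy yields the two pointwise identities $|\nabla\ycorr(x)|=|\nabla_xH(x,x')|$ and $|\nabla w(x)|=|\nabla_xG(x,x')|$, which drive the rest of the argument.

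With these in hand, I would prove \eqref{eq:Dycorr_bnd} from classical, \emph{domain-independent} bounds for convex domains: the Green's-function gradient estimate $|\nabla_xG(x,x')|\le C|x-x'|^{-1}$ and the Poisson-kernel estimate $|\partial_\nu G(\sigma,x')|\le C\,\dist(x',\partial U^\Om)\,|\sigma-x'|^{-2}$, both with $C$ universal over convex domains. The pointwise bound then follows immediately from $|\nabla\ycorr|\le|\nabla w|+|\nabla\yh(\cdot-x')|\le|\nabla_xG(\cdot,x')|+\frac1{2\pi}|\cdot-x'|^{-1}$. For the uniform bound I would use that $\ycorr$ is harmonic in the interior, so $|\nabla\ycorr|^2$ is subharmonic and (by $\ycorr\in\CC^1(U^\Om)$, which holds because all corners are convex) attains its maximum on $\partial U^\Om$; there $G=0$ forces $\nabla G$ to be purely normal, whence $|\nabla\ycorr(\sigma)|\le|\partial_\nu G(\sigma,x')|+|\nabla\yh(\sigma-x')|$, and both terms are $\le C\,\dist(x',\partial U^\Om)^{-1}$ since $|\sigma-x'|\ge\dist(x',\partial U^\Om)$ for every $\sigma\in\partial U^\Om$.

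The Hessian estimate \eqref{eq:D2ycorr_bnd} is the main obstacle, and is where the logarithm appears. My plan is to split $U^\Om$ into a bulk region $\{\dist(x,\partial U^\Om)\ge\delta\}$ and a boundary layer $\{\dist(x,\partial U^\Om)<\delta\}$. On the bulk I would combine the interior estimate $|\nabla^2\ycorr(x)|\lesssim\dist(x,\partial U^\Om)^{-1}\sup_{B}|\nabla\ycorr|$ with \eqref{eq:Dycorr_bnd}, obtaining $|\nabla^2\ycorr(x)|\lesssim\dist(x,\partial U^\Om)^{-1}\dist(x,x')^{-1}$; the decay in $\dist(x,x')$ localises the square-integral near $x'$ at the scale $\dist(x',\partial U^\Om)$, and the remaining radial integration $\int_\delta^{c\,\dist(x',\partial U^\Om)}t^{-1}\dt$ in the distance-to-boundary variable is what produces the $\log$. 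On the boundary layer the interior estimate degenerates, so there I would instead invoke the $\HH^2$-regularity of the Neumann problem on convex polygons from \cite{Grisvard} (Corollary 4.4.3.8), controlling $\|\nabla^2\ycorr\|_{\LL^2}$ on the layer by tangential derivatives of the data $g=-\nabla\yh(\cdot-x')\cdot\nu$, whose relevant norms scale like $\dist(x',\partial U^\Om)^{-1}$ up to a constant depending on $\ind(\partial W^\Om)$. Optimising the layer width $\delta$ then balances the two contributions and yields the claimed rate $\dist(x',\partial U^\Om)^{-1}\log\dist(x',\partial U^\Om)$.

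I expect the genuinely delicate points to be (i) keeping every constant independent of $U^\Om$ --- this relies throughout on convexity, both for the universal Green's-function/Poisson-kernel bounds and for the good behaviour of the solution at the convex corners --- and (ii) handling the corner contributions in the boundary-layer $\HH^2$-estimate, where Grisvard's theory is least explicit. A pure $\HH^2$-estimate from the boundary data alone would only give the weaker rate $\dist(x',\partial U^\Om)^{-1/2}$, so it is precisely the interplay with the interior bounds that sharpens this to the stated logarithmic rate; the residual $\log$ strongly suggests the estimate is still not optimal, consistent with the discussion in Remark~\ref{rem:s12}.
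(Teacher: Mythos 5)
Your treatment of the gradient bounds \eqref{eq:Dycorr_bnd} is sound and is essentially the paper's argument: both rest on the observation that $\yh(\cdot-x')+\ycorr$ is a harmonic conjugate of the Dirichlet Green's function, followed by a maximum-principle argument for the partial derivatives of $\ycorr$ (which are harmonic and continuous up to the closure because the corners are convex). The only cosmetic difference is that the paper bounds $|\nabla\ycorr|=|\nabla\Psi|$ directly via Fromm's domain-independent gradient estimate for the \emph{regular part} of the Green's function on convex domains (plus a scaling argument to remove the dependence on $\diam(U^\Om)$), whereas you go through $|\nabla G|$ and the Poisson kernel; these are interchangeable provided you actually cite or prove the universal convex-domain bounds you invoke.

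The Hessian bound \eqref{eq:D2ycorr_bnd} is where your plan has a genuine gap, on two counts. First, the bookkeeping of where the logarithm comes from is off: writing $d=\dist(x',\partial U^\Om)$ and using the coarea formula in $t=\dist(x,\partial U^\Om)$, the interior estimate $|\nabla^2\ycorr|\lesssim t^{-1}\min\{\dist(x,x')^{-1},d^{-1}\}$ gives, for the \emph{squared} $\LL^2$ norm over the bulk $\{t\geq\delta\}$, a contribution $\lesssim\int_\delta^{d/2}t^{-2}\,d^{-1}\dt+d^{-2}\lesssim(\delta d)^{-1}+d^{-2}$ (the level sets are convex curves at distance $\gtrsim\max\{d,|t-d|\}$ from $x'$, so the curve integral of $|x-x'|^{-2}$ is $\lesssim\min\{d^{-1},|t-d|^{-1}\}$, not $\lesssim t$). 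There is no $\int t^{-1}\dt$ and hence no log from the bulk; with $\delta\sim d$ the bulk already gives the clean rate $d^{-1}$, and for $\delta\ll d$ it is worse than claimed. Second, and more seriously, the entire difficulty is then concentrated in the layer $\{t<\delta\}$, and the tool you propose there --- an $\HH^2$ estimate \emph{localized to the layer} controlled by tangential derivatives of the Neumann data --- is not an available estimate: Grisvard's Corollary 4.4.3.8 gives global $\HH^2$ regularity but with no domain-independent quantitative control for the pure Neumann problem, and a layer-localized version would require controlling $\nabla^2\ycorr$ on the inner edge of the layer, which is circular. Obtaining a domain-independent $\HH^2$ bound for a harmonic function from its Neumann data is precisely the hard point here.

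The paper circumvents this by staying on the conjugate side. Since $\|\nabla^2\ycorr\|_{\LL^2(U^\Om)}=\|\nabla^2\Psi\|_{\LL^2(U^\Om)}$ by the Cauchy--Riemann equations, it suffices to bound the Hessian of the \emph{Dirichlet} problem for $\Psi$ with boundary data $-\smfrac{1}{2\pi}\log|x-x'|$. One lifts this data by the explicit function $\Phi(x)=-\smfrac1{2\pi}\phi(|x-x'|/d)\log|x-x'|$, so that $\Psi-\Phi\in\HH^2(U^\Om)\cap\HH^1_0(U^\Om)$ solves a Poisson problem with right-hand side $\Delta\Phi$, and invokes the clean, domain-independent convexity identity $\|\nabla^2(\Psi-\Phi)\|_{\LL^2(U^\Om)}\leq\|\Delta\Phi\|_{\LL^2(U^\Om)}\leq\|\nabla^2\Phi\|_{\LL^2(\R^2)}$ from \cite[Thm.~3.2.1.2, Thm.~4.3.1.4]{Grisvard}. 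The logarithm then appears by direct computation: on the annulus of width $\sim d$ where the cutoff acts, $|\nabla^2\Phi|\lesssim d^{-2}\log d$ over an area $\sim d^2$, yielding $\|\nabla^2\Phi\|_{\LL^2(\R^2)}\lesssim d^{-1}\log d$. If you want to pursue your Neumann-side route, the correct replacement for your layer estimate is the global Rellich--Grisvard identity $\int_{U^\Om}|\nabla^2\ycorr|^2\dx\leq-2\int_{\partial U^\Om}\partial_\tau\ycorr\,\partial_\tau g\ds$ (valid on convex polygons because the corner contributions have a sign), but justifying the corner terms is delicate and is exactly what the conjugation trick avoids.
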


\begin{proof}
We begin by noting that $\yh(x-x')=\smfrac{1}{2\pi}\arg(x-x')$ is a harmonic
conjugate of $\smfrac{1} {2\pi}\log(|x-x'|)$, and we will further demonstrate
that $\ycorr$ is a harmonic conjugate of $\Psi$, the solution of the
Dirichlet boundary value problem
\begin{equation*}
  -\Delta \Psi(x)=0\quad\text{in }U^\Om,\qquad \Psi(s) = -\smfrac{1}{2\pi}
    \log(|x-x'|)\quad\text{on }\partial U^\Om.
\end{equation*}
By virtue of Corollary 4.4.3.8 in \cite{Grisvard}, there exists
a unique $\Psi\in\HH^2(U^\Om)$ solving this problem, and
since $\Psi$ is harmonic in $U^\Om$, a simply connected region,
a harmonic conjugate $\Psi^*$ exists. By definition, $\Psi^*$ satisfies
the Cauchy--Riemann equations
\begin{equation}
  \nabla \Psi^*(x) = \mR_4^T\nabla \Psi(x)\quad\text{for all }x\in U^\Om,
    \label{eq:CR_eqns}
\end{equation}
where $\mR_4$ is the matrix corresponding a positive rotation through
$\smfrac\pi 2$ about the origin. In particular,
\begin{equation*}
  \frac{\partial \Psi^*}{\partial \nu} = \frac{\partial \Psi}{\partial \tau}=
    \frac{(x-x')}{2\pi|x-x'|^2}\cdot\mR_4\nu=-\nabla\yh(x-x')\cdot\nu
    \quad\text{on }\partial U^\Om,\quad \text{and} \quad -\Delta \Psi^*=0
    \text{ in }U^\Om,
\end{equation*}
where $\tau$ is the unit tangent vector to $\partial U^\Om$ with the positive
orientation. By uniqueness of solutions for \eqref{eq:y_corr_prob_1disl},
it follows that $\Psi^*=\ycorr$ up to an additive constant, and hence
$\ycorr$ is a harmonic conjugate of $\Psi$. Furthermore, by differentiating
\eqref{eq:CR_eqns},
\begin{equation}
  \|\nabla^2\Psi\|_{\LL^2(U^\Om)}=\|\nabla^2\ycorr\|_{\LL^2(\Om)}.
    \label{eq:fin:D2y=D2Psi}
\end{equation}
The identities \eqref{eq:CR_eqns} and \eqref{eq:fin:D2y=D2Psi} will allow
us to use estimates on the derivatives of $\Psi$ to directly deduce
\eqref{eq:Dycorr_bnd} and \eqref{eq:D2ycorr_bnd}.

To prove \eqref{eq:Dycorr_bnd}, we rely upon Proposition 1 in \cite{Fromm93},
which states that there exists a constant $c_1$ depending only on
$\diam(U^\Om)$ such that
\begin{equation*}
  |\nabla \Psi(x)|\leq c_1\,\dist(x',x)^{-1}.
\end{equation*}
However, as $U^\Om\subset\R^2$, it is straightforward to see by a
change of variables and a scaling argument that the constant $c_1$
cannot depend on $\diam(U^\Om)$, and is therefore independent of the domain
(as long as it remains convex). Taking the Euclidean norm of both sides
in \eqref{eq:CR_eqns} now implies the pointwise bound in
\eqref{eq:Dycorr_bnd}, and the $\LL^\infty$ bound follows immediately
as the partial derivatives of $\ycorr$ satisfy the strong maximum
principle.

To prove \eqref{eq:D2ycorr_bnd}, we use the classical \emph{a priori}
bounds for the Poisson problem. In order to do so, we must introduce an
auxiliary problem with homogeneous boundary conditions.
We therefore seek a solution to
\begin{equation*}
  -\Delta(\Psi-\Phi)=\Delta \Phi\quad\text{in }U^\Om,\qquad \Psi-\Phi\in\HH^2(U^\Om)\cap\HH^1_0(U^\Om),
\end{equation*}
where the function $\Phi:\R^2\to\R$ is defined to be
\begin{align*}
  \Phi(x)&:=-\smfrac{1}{2\pi}\phi\b(|x-x'|/\dist(x',\partial U^\Om)\b)
   \log(|x-x'|),\\\text{where}\quad\phi&\in\CC^\infty([0,+\infty))\quad
   \text{and}\quad\phi(r)=\cases{0 & r\in[0,\smfrac14],\\
    1&r\geq1.}
\end{align*}
By construction $\Phi\in\CC^\infty(U^\Om)$, and $\nabla^2\Phi\in\LL^2(\R^2)$.
Thus $\Delta\Phi\in\LL^2(U^\Om)$, and the existence of a
unique solution $\Psi-\Phi\in\HH^2(U^\Om)\cap\HH^1_0(U^\Om)$ follows from
\cite[Theorem 3.2.1.2]{Grisvard}). Furthermore, inspecting the
proof of \cite[Theorem 4.3.1.4]{Grisvard}, we see that
\begin{equation*}
  \|\nabla^2(\Psi-\Phi)\|_{\LL^2(U^\Om)} = \|\Delta \Phi\|_{\LL^2(U^\Om)}
    \leq \|\nabla^2\Phi\|_{\LL^2(\R^2)},
\end{equation*}
and thus a straightforward integral estimate yields
\begin{equation*}
  \|\nabla^2 \Psi\|_{\LL^2(U^\Om)}\leq\|\nabla^2(\Psi-\Phi)\|_{\LL^2(U^\Om)}+
    \|\nabla^2\Phi\|_{\LL^2(\R^2)}  \leq c_2 \frac{\log\b(\dist(x',\partial U^\Om)\b)}
    {\dist(x',\partial U^\Om)},
\end{equation*}
where $c_2$ is independent of the domain.
\end{proof}

\subsection{Analysis of the predictor}

Here, we prove that the predictor defined in \eqref{eq:appr_sol_fin} is
indeed an approximate equilibrium. Our first step is to formulate the analogue
of Lemma \ref{th:residual_sum_yhat} in the polygonal case.

\begin{lemma}[Finite domain stress lemma]
\label{th:residual_sum_ycorr}
  Let $\Om$ be a convex lattice polygon, $\D$ a dislocation configuration in
  $\Om$ and $z:=\sum_{(C,s)\in\D} \yh\circ G^C+\ycorr$, where $\ycorr$ solves
  \eqref{eq:y_corr_prob}. Then there exist $L_0$ and $S_0$ which depend only
  on $N=|\D|$ such that whenever $L_\D\geq L_0$ and $S_\D\geq S_0$,
  there exist $g:\BOm\to\R$ and $\Sigma:\{b\in\partial W^\Om\}\to\R$
  such that
  \begin{equation*}
    \<\del E^\Om(z),v\> = \sum_{b\in\BOm} g_b Dv_b
      +\sum_{b\in\partial W^\Om}\Sigma_bDv_b,
  \end{equation*}
  and furthermore
  \begin{align}
    |g_b|&\leq c_1\sum_{(C,s)\in\D}\b(1+\dist(b,C)\b)^{-3}
       +c_1\|\nabla^2\ycorr\|_{\LL^2(\omega_b)}&&\text{for all}\quad
       b\notin\partial W^\Om,\label{eq:fin:int_stress_est}\\
    \text{and}\quad|g_b+\Sigma_b|&\leq c_2\sum_{(C,s)\in\D}
      \b(1+\dist(P_\zeta,C)\b)^{-1}+c_1\|\nabla^2\ycorr\|_{\LL^2(\omega_b)}&&\text{for all}\quad b\in
      P_\zeta\subset\partial W^\Om.\label{eq:fin:bdry_stress_est}
  \end{align}
  The constant $c_1$ is independent of the domain, and
  $c_2$ depends linearly on $\ind(\partial W^\Om)$.
\end{lemma}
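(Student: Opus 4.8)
The plan is to mirror the structure of Lemma~\ref{th:residual_sum_yhat}, splitting the predictor as $z = z_{\rm sing} + \ycorr$ with $z_{\rm sing} := \sum_{(C,s)\in\D} s\,\yh\circ G^C$, and treating the two pieces by different means: $z_{\rm sing}$ via the pointwise derivative bounds available for $\yh$, and $\ycorr$ via the $\LL^2$ Hessian estimate \eqref{eq:D2ycorr_bnd}, since no pointwise control on $\nabla^2\ycorr$ is at hand. First I would fix $L_0=L_0(N)$ and $S_0=S_0(N)$ large enough that the pointwise bounds $|\nabla(\yh\circ G^C)(x)|\lesssim \dist(x,x^C)^{-1}$ and $|\nabla\ycorr(x)|\lesssim S_\D^{-1}$ from \eqref{eq:Dycorr_bnd} force $|Dz_b|<\smfrac12$ for every bond outside the core cells $\bigcup_{(C,s)\in\D}H^C(A)$; the decay constants here are domain-independent, so $L_0,S_0$ depend on $N$ only. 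Arguing as in \cite[Lemma 4.3]{HudsonOrtner13} this renders $\alpha\in[Dz]$ unique, with $\alpha_{(\xi,\xi+a_i)}=\int_0^1\nabla_{a_i}z(\xi+ta_i)\dt$, so that $\<\del E^\Om(z),v\>=\sum_{b\in\BOm}\psi'(\alpha_b)Dv_b$.

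Next I would define the discrete divergence-free subtraction on interior bonds exactly as before: for $b=(\xi,\xi+a_i)\notin\partial W^\Om$, whose neighbourhood $\omega_b$ is the union of the two positively oriented cells adjacent to $b$, set $h_b:=\frac{\psi''(0)}{V}\lim_{\eps\to0}\int_{\omega_b\setminus \bar C_\eps}\nabla z\cdot a_i\dx$, $g_b:=\psi'(\alpha_b)-h_b$ and $\Sigma_b:=0$. The key structural observation is that $\sum_{b\in\BOm} h_b Dv_b$ reproduces, up to the factor $\psi''(0)/V$, the continuum Dirichlet form $\int_{W^\Om}\nabla z\cdot\nabla Iv\dx$, with $Iv$ the piecewise affine interpolant over $\Cells$. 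Integrating by parts over $W^\Om$ and using that $z$ is harmonic in ${\rm int}(U^\Om)$ together with the natural boundary condition $\nabla z\cdot\nu=0$ on $\partial U^\Om$ built into \eqref{eq:y_corr_prob}, the would-be leading boundary flux cancels; what survives is supported on the layer where $\partial W^\Om$ fails to coincide with $\partial U^\Om$, and it is precisely this remainder that I would package into the surface stresses $\Sigma_b$, $b\in\partial W^\Om$, yielding the claimed decomposition $\<\del E^\Om(z),v\>=\sum_{b\in\BOm}g_bDv_b+\sum_{b\in\partial W^\Om}\Sigma_bDv_b$.

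For the interior estimate \eqref{eq:fin:int_stress_est} I would Taylor expand $\psi'(\alpha_b)-h_b$ about $0$; the evenness of $\psi$ annihilates the $\psi'(0)$ and $\psi'''(0)$ contributions, leaving $\psi''(0)\big(\alpha_b-\frac{1}{V}\int_{\omega_b}\nabla z\cdot a_i\dx\big)$ plus a cubic remainder. The bracket is the difference between the line average of $\nabla z\cdot a_i$ over $b$ and its area average over $\omega_b$, and here the two parts of $z$ diverge: for $z_{\rm sing}$, Taylor expanding about the midpoint of $b$ and exploiting the symmetry of $\omega_b$ about $b$ kills the gradient and Hessian terms, leaving $O(|\nabla^3 z_{\rm sing}|)\lesssim\sum_{(C,s)\in\D}(1+\dist(b,C))^{-3}$ exactly as in Lemma~\ref{th:residual_sum_yhat}; for $\ycorr$, where only an $\LL^2$ Hessian bound is available, I would estimate the same average difference by a Poincar\'e--trace inequality on the fixed-size Lipschitz region $\omega_b$, which bounds it by $\|\nabla^2\ycorr\|_{\LL^2(\omega_b)}$. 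The cubic remainder is of size $\big(\sum_{(C,s)\in\D}(1+\dist(b,C))^{-1}\big)^3$ and is absorbed into the first term.

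The main obstacle is the boundary estimate \eqref{eq:fin:bdry_stress_est}, and this is where the weaker eventual rate originates. On a bond $b\in P_\zeta\subset\partial W^\Om$ the symmetric cancellation fails for two compounding reasons: $\omega_b$ is truncated by the boundary, so the first-order term no longer vanishes, and $W^\Om\neq U^\Om$, so the natural boundary condition — which holds on $\partial U^\Om$ — does not directly annihilate the flux seen by the discrete boundary $\partial W^\Om$. I would therefore account for the residual a whole period at a time: the singular field contributes at the mere size of its boundary gradient, $\sum_{(C,s)\in\D}(1+\dist(P_\zeta,C))^{-1}$, with no decay gain, while the corrector again contributes $\|\nabla^2\ycorr\|_{\LL^2(\omega_b)}$; because the bonds are collected over a period of length $\mathcal{H}^1(P_\zeta)=\ind(\Gamma_m)$, the prefactor $c_2$ necessarily grows linearly in $\ind(\partial W^\Om)$, whereas $c_1$ stays domain-independent. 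Since on boundary bonds the split into $g_b$ and $\Sigma_b$ is not canonical, only the total $g_b+\Sigma_b$ is estimated. The delicate points to verify carefully are that the natural boundary condition indeed removes the leading boundary term up to the controllable $W^\Om$--$U^\Om$ discrepancy, and that the periodic contributions reassemble with the correct index scaling; granting these, summing the interior and boundary bounds and choosing $L_0,S_0$ completes the proof.
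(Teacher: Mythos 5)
Your proposal follows essentially the same route as the paper: the same divergence-free subtraction $h_b$ on interior bonds, the same per-period cancellation $\int_{P_\zeta}\nabla z\cdot\nu\,\mathrm{d}s=0$ obtained from the divergence theorem applied between $P_\zeta$ and $\partial U^\Om$ (where the natural boundary condition holds), conversion of the surviving mean-zero flux into $\sum_{b}\Sigma_b Dv_b$ by integration by parts along each period, and the same split of the Taylor-expanded residual into a pointwise $\dist(b,C)^{-3}$ bound for the singular part and an $\|\nabla^2\ycorr\|_{\LL^2(\omega_b)}$ interpolation/Poincar\'e bound for the corrector. The delicate points you flag for verification --- the per-period flux cancellation and the reassembly of the boundary terms with a prefactor linear in $\ind(\partial W^\Om)=\max_m\mathcal{H}^1(P_\zeta)$ --- are exactly the steps the paper carries out.
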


\begin{proof}
  We begin by choosing $L_0$ and $S_0$ to ensure that $\alpha\in[Dz]$
  is unique: since the constant in \eqref{eq:Dycorr_bnd} is independent
  of the domain, and $D\yh$ has a fixed rate of decay, this choice depends
  only
  on $N$ as stated. Furthermore, we have the representation
  $\alpha_{(\xi,\xi+a_i)} = \int_0^1 \nabla z(\xi+t a_i)\cdot
  a_i\dt$, where $\nabla z$ is to be understood as the
  extension of the gradient of $z$ to a function in $\CC^\infty(U^\Om
  \setminus\bigcup_{(C,s)\in\D}\{x^C\})$.

  Let $\omega_b := \bigcup\{C\in\COm\sep\pm b\in\partial C,
  C\text{ positively oriented}\}$, the union of any cells which $b$
  lies in the boundary of. For $b\notin \partial W^\Om$, $\omega_b$ is
  always a pair of cells, and we set $V:=|\omega_b|$ for any
  $b\notin\partial W^\Om$.
  
  Let $\bar{C}_\epsilon := \bigcup_{(C,s) \in \D} B_\epsilon(x^C)$. If
  $b = (\xi, \xi+a_i)$, define
  \begin{displaymath}
    h_b :=  \frac{\psi''(0)}{V} \lim_{\epsilon \to 0} 
    \int_{\omega_b \setminus \bar{C}_\epsilon}  \nabla z \cdot a_i \dx
    \qquad \text{and} \qquad 
    g_b := \psi'(\alpha_b) - h_b.
  \end{displaymath}
  As in the proof of Lemma \ref{th:residual_sum_yhat}, an application of the
  divergence theorem demonstrates that the former (and hence the latter)
  definition makes sense. Let $v\in\Us(\Om)$, and denote its piecewise
  linear interpolant $Iv$; applying the divergence theorem once more,
  \begin{equation*}
    \sum_{b \in \BOm} h_b Dv_b = 
    \lim_{\epsilon \to 0} \frac{\psi''(0)}{V} \int_{W^\Om \setminus
    \bar{C}_\epsilon} \nabla z \cdot \nabla Iv \dx
    = \frac{\psi''(0)}{V}\int_{\partial W^\Om}Iv\,\nabla z\cdot \nu\ds.
  \end{equation*}
  Recalling the definition of $P_\zeta$ from \eqref{eq:Pzeta_defn}, we
  find that
  \begin{equation*}
    \sum_{b \in \BOm} h_b Dv_b =
      \!\!\sum_{\zeta\in\partial W^\Om\cap\partial U^\Om}
      \frac{\psi''(0)}{V}\int_{P_\zeta} Iv\,\nabla z\cdot\nu\ds.
  \end{equation*}
  By considering the integral over a single period, we may integrate by
  parts
  \begin{equation*}
    \int_{P_\zeta} Iv\,\nabla z\cdot\nu\ds = Iv(\zeta+\tau)
      \int_{P_\zeta} \nabla z\cdot\nu\ds
      -\int_{P_\zeta} Iv'\bg(\int_{\gamma_\zeta^s}\nabla z\cdot\nu\dt\bg)\ds,
  \end{equation*}
  where $\gamma_\zeta^s$ is the arc--length parametrisation of the Lipschitz
  curve following $P_\zeta$ between $\zeta$ and $s$, $\tau$ is the relevant
  lattice tangent vector, and $Iv'$ is the derivative along the curve
  following $P_\zeta$.
  Applying the divergence theorem to the region bounded by $P_\zeta$ and
  $\partial U^\Om$ (as seen on the right of Figure \ref{fig:conv_poly})
  and using the boundary conditions $\nabla z\cdot\nu=0$ on $\partial U^\Om$,
  it follows that $\int_{P_\zeta} \nabla z\cdot\nu=0$. Splitting the domain
  of integration $P_\zeta$ into individual bonds and noting that
  $\nabla Iv$ is constant along each bond,
  \begin{align*}
    \int_{P_\zeta} Iv\,\nabla z\cdot\nu\ds&=
      -\sum_{b\in P_\zeta} \int_{b=(\xi,\xi+a_i)} \hspace{-8mm}\nabla
      Iv\cdot a_i \bg(\int_{\gamma_\zeta^s}\nabla z\cdot \nu\dt\bg) \ds,\\
    &=\sum_{b\in P_\zeta} \Sigma_bDv_b,\quad\text{where}\quad
      \Sigma_b:=-\int_b\int_{\gamma_\zeta^s}\nabla z\cdot\nu\dt\ds.
  \end{align*}
  This concludes the proof of the first part of the statement.

  To obtain \eqref{eq:fin:int_stress_est}, we Taylor expand the potential to
  obtain
  \begin{equation*}
    g_b=\psi''(0)\bg(\int_b\nabla z\cdot a_i\dx
      -\lim_{\eps\to0}\frac{1}{V}\int_{\omega_b\setminus
      \bar{C}_\eps}\nabla z\cdot a_i\dx\bg)  +O\b(|Dz_b|^3\b).
  \end{equation*}
  Since $\nabla z = \sum_{(C,s)\in\D}\nabla\yh\circ G^C+\nabla\ycorr$, the
  only change to the analysis carried out in the proof of
  Lemma~\ref{th:residual_sum_yhat} is to estimate the terms involving
  $\nabla\ycorr$.
  As $\int_b \nabla \ycorr\cdot a_i\dx = \smfrac{1}{|\omega_b|}
  \int_{\omega_b} \nabla I\ycorr\cdot a_i\dx$, applying Jensen's
  inequality and standard interpolation error estimates (see for example
  \S4.4 of \cite{BrennerScott}) gives
  \begin{align*}
    \int_b\nabla \ycorr\cdot a_i\dx -\frac{1}{V}\!\int_{\omega_b}
      \nabla \ycorr\cdot a_i\dx &= \frac{1}{V}
      \int_{\omega_b} \b(\nabla I\ycorr-\nabla \ycorr\b)
      \cdot a_i\dx\leq \frac{1}{\sqrt V}\b\|\nabla I\ycorr-\nabla \ycorr
      \b\|_{\LL^2(\omega_b)}\leq c\b\|\nabla^2 \ycorr\b\|_{\LL^2(\omega_b)}
  \end{align*}
  where $c>0$ is a fixed constant. Applying Young's inequality and
  \eqref{eq:Dycorr_bnd} to estimate $|Dz_b|^3$ now leads immediately to
  \eqref{eq:fin:int_stress_est}.

  Estimate \eqref{eq:fin:bdry_stress_est} follows in a similar way: 
  Taylor expanding $g_b$, but noting that $|\omega_b|=V/2$ and $\omega_b$ is
  no longer symmetric, the same argument used above gives
  \begin{equation*}
    |g_b+\Sigma_b| \leq\int_{b}\bg|\smfrac12\nabla z\cdot a_i-
      \int_{\gamma_\zeta^s}\nabla z\cdot\nu\dt\bg|\ds
      +c\|\nabla^2\ycorr\|_{\LL^2(\omega_b)}+\sum_{(C,s)\in\D}
      \b\|\nabla^2\yh\circ G^C\b\|_{\LL^\infty(\omega_b)}+O(|Dz_b|^3).
  \end{equation*}
  Applying \eqref{eq:Dycorr_bnd} to the first and last terms and
  and using the decay of $\nabla\yh$ now yields
  \begin{equation*}
    |g_b+\Sigma_b|\leq c\b(1+\mathcal{H}^1(P_\zeta)\b)\sum_{(C,s)\in\D}\b(1+
      \dist(P_\zeta,C)\b)^{-1}+c\|\nabla^2\ycorr\|_{\LL^2(\omega_b)}.
  \end{equation*}
  Upon recalling the definition of $\ind(\partial W^\Om)$ from
  \eqref{eq:ind_defn}, the proof is complete.
\end{proof}

We can now deduce a residual estimate for the predictor in the
finite domain case.

\begin{lemma}
\label{th:fin_latt_cons}
Suppose $\Om$ is a convex lattice polygon, and $z$ is the approximate
solution corresponding to a dislocation configuration $\D$ in $\Om$
defined in \eqref{eq:appr_sol_fin} with truncation radius
$R=\min\b\{L_\D/5,S_\D^{1/2}\b\}$.  Then there exist constants $L_0$,
$S_0$ and $c$ depending only on $N=|\D|$ and $\ind(\partial W^\Om)$
such that whenever $L_\D\geq L_0$ and $S_\D\geq S_0$,
  \begin{equation*}
    \b\|\del E^{\Om}(z)\b\|_{(\Hsi(\Om))^*}\leq c\B(L_\D^{-1}+S_\D^{-1/2}\B).
  \end{equation*}
\end{lemma}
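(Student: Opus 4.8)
The plan is to mirror the infinite-lattice argument of Lemma~\ref{th:full_latt_cons}, splitting the test function into pieces localised at each core and a single remaining piece that sees the boundary, and to control the genuinely new boundary contribution using the finite domain stress lemma (Lemma~\ref{th:residual_sum_ycorr}) together with the corrector bounds \eqref{eq:Dycorr_bnd}--\eqref{eq:D2ycorr_bnd}. Concretely, I enumerate $\D$ as $(C^i,s^i)$, $i=1,\dots,N$, set $G^i:=G^{C^i}$, $\yh^i:=\yh\circ G^i$ and $y^i:=(\yh+\Pi_R u)\circ G^i$, and for $v\in\Hsi(\Om)$ put $v^i:=\Pi^{C^i}_r v$ with $r:=2(R+1)$ and $v^0:=v-\sum_{i=1}^N v^i$. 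Since $R=\min\{L_\D/5,S_\D^{1/2}\}$, the radii satisfy $3R+2<L_\D$ and $r\lesssim S_\D^{1/2}\ll S_\D$, so each $v^i$ ($i\geq1$) is supported well inside $\Om$ and away from every other core, while $Dv^0$ vanishes on each $B_R(x^{C^i})$; Lemma~\ref{th:trunc_est} gives $\|Dv^i\|_2\lesssim\|Dv\|_2$ for $i=0,\dots,N$. I then write $\<\del E^\Om(z),v\>=\<\del E^\Om(z),v^0\>+\sum_{i=1}^N\<\del E^\Om(z),v^i\>$ and estimate each group.

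For the core terms I exploit that, on $\supp(v^i)$, $Dz_b=s^i Dy^i_b+\sum_{j\neq i}s^j D\yh^j_b+D\ycorr_b$, and perform the same three-fold split as in Lemma~\ref{th:full_latt_cons}. The leading piece is $s^i\<\del E(\yh^i+u\circ G^i),v^i\>$, which vanishes since \textbf{(STAB)} and lattice-automorphism invariance give $\del E((\yh+u)\circ G^i)=0$. The truncation defect $s^i\<\del E(y^i)-\del E(\yh^i+u\circ G^i),v^i\>$ is bounded by $\|\psi''\|_\infty\|D\Pi_R u-Du\|_2\|Dv^i\|_2\lesssim R^{-1}\|Dv\|_2$, exactly as the term $\mathrm{T}_3$ there. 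The remainder $\<\del E(z)-s^i\del E(y^i),v^i\>$ is Taylor-expanded in $Dz_b-s^iDy^i_b=\sum_{j\neq i}s^j D\yh^j_b+D\ycorr_b$: the inter-core part reproduces $\mathrm{T}_1,\mathrm{T}_2$ and yields $\lesssim L_\D^{-1}\|Dv\|_2$, while the $\ycorr$-part, which is new, I handle by writing $D\ycorr_b=\int_b\nabla\ycorr\cdot a_i$ and subtracting the cell average $V^{-1}\int_{\omega_b}\nabla\ycorr\cdot a_i$; because $v^i$ has interior support and $\ycorr$ is harmonic, the averaged part sums to $\int_\Om\nabla\ycorr\cdot\nabla Iv^i=0$, leaving a consistency error controlled by $(\sum_b\|\nabla^2\ycorr\|_{\LL^2(\omega_b)}^2)^{1/2}\|Dv^i\|_2\lesssim\|\nabla^2\ycorr\|_{\LL^2(\Om)}\|Dv\|_2\lesssim S_\D^{-1}\log S_\D\,\|Dv\|_2$ by \eqref{eq:D2ycorr_bnd}.

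The decisive term is $\<\del E^\Om(z),v^0\>$. Since $Dv^0$ avoids the core-corrector supports, here $z$ coincides with the pure predictor $\zeta:=\sum_i s^i\yh^i+\ycorr$ of Lemma~\ref{th:residual_sum_ycorr}, so $\<\del E^\Om(z),v^0\>=\sum_{b\in\BOm}g_bDv^0_b+\sum_{b\in\partial W^\Om}\Sigma_bDv^0_b$. The interior sum is harmless: as $\dist(b,C)\gtrsim R$ on $\supp(Dv^0)$, \eqref{eq:fin:int_stress_est} gives $\|g\|_{\ell^2(\supp Dv^0)}\lesssim R^{-2}+\|\nabla^2\ycorr\|_{\LL^2(\Om)}$. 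For the boundary sum I use \eqref{eq:fin:bdry_stress_est} and Cauchy--Schwarz, $|\sum_{b\in\partial W^\Om}(g_b+\Sigma_b)Dv^0_b|\leq\|g+\Sigma\|_{\ell^2(\partial W^\Om)}\|Dv^0\|_{\ell^2(\partial W^\Om)}$. The trace factor is bounded by a domain-independent multiple of $\|Dv^0\|_2$ by passing to the interpolant and using a discrete trace estimate on the single layer of boundary cells. The coefficient factor is the heart of the matter: grouping bonds into periods $P_\zeta$ (at most $\ind(\partial W^\Om)$ bonds each, by \eqref{eq:ind_defn}) and using $|g_b+\Sigma_b|\lesssim\ind(\partial W^\Om)\sum_C(1+\dist(P_\zeta,C))^{-1}+\|\nabla^2\ycorr\|_{\LL^2(\omega_b)}$, one is led to the sum $\sum_{\zeta\in\partial W^\Om}\dist(P_\zeta,C)^{-2}$, which, because each core sits at distance $\geq S_\D$ from a straight boundary segment, behaves like $\int(S_\D^2+t^2)^{-1}\,dt\sim S_\D^{-1}$. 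This gives $\|g+\Sigma\|_{\ell^2(\partial W^\Om)}\lesssim S_\D^{-1/2}$ with constant depending only on $N$ and $\ind(\partial W^\Om)$, so the boundary term is $\lesssim S_\D^{-1/2}\|Dv\|_2$.

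Collecting everything yields $\|\del E^\Om(z)\|_{(\Hsi(\Om))^*}\lesssim L_\D^{-1}+R^{-1}+S_\D^{-1}\log S_\D+S_\D^{-1/2}$, and the choice $R=\min\{L_\D/5,S_\D^{1/2}\}$ gives $R^{-1}\leq 5L_\D^{-1}+S_\D^{-1/2}$, so the right-hand side collapses to $c(L_\D^{-1}+S_\D^{-1/2})$ as claimed. The main obstacle is precisely the boundary sum: extracting the sharp half-power $S_\D^{-1/2}$ rather than the naive $S_\D^{-1}$ requires both the sharp $\ell^2$-in-$P_\zeta$ accounting of the surface stress $g_b+\Sigma_b$ and, crucially, keeping every constant independent of $\diam(\Om)$ --- which is why the domain-uniform corrector bounds \eqref{eq:Dycorr_bnd}--\eqref{eq:D2ycorr_bnd} and a domain-uniform discrete trace inequality are indispensable. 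This is the reduced-rate phenomenon anticipated in Remark~\ref{rem:s12}.
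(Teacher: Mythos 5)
Your proposal follows essentially the same route as the paper: the same truncation-based splitting $v=v^0+\sum_i v^i$ with $r=2(R+1)$, the same three-fold decomposition of the residual (far-field term via the finite-domain stress lemma with the period-wise $\ell^2$ summation over $P_\zeta$ yielding the $S_\D^{-1/2}$ rate, inter-core and corrector terms via Taylor expansion, truncation defect via Lemma~\ref{th:trunc_est}), and the same final collection of rates. The only deviations are cosmetic: you treat the $\nabla\ycorr$ contribution on the core patches by subtracting cell averages and using harmonicity (the paper instead uses the $\LL^\infty$ bound \eqref{eq:Dycorr_bnd} to get $rS_\D^{-1}$, which suffices), and your appeal to a discrete trace inequality is unnecessary since $\|Dv^0\|_{\ell^2(\partial W^\Om)}\leq\|Dv^0\|_{\ell^2(\BOm)}$ trivially.
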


\begin{proof}
We begin by enumerating the elements $(C^i,s^i)\in\D$, and
set $G^i:=G^{C^i}$. For $i=1,\ldots,N$, we let $\yh^i = \yh\circ G^i$,
let $y^i=(\yh+\Pi_Ru)\circ G^i$, and let $\ycorr^i$ be the corrector solving
\eqref{eq:y_corr_prob_1disl} with $x'=x^{C^i}$.

Define $r:=2(R+1)=2\b(\!\min\b\{L_\D/5,S_\D^{1/2}\b\}+1\b)$. Taking a test
function $v\in\Hsi(\Om)$, let
\begin{equation*}
  v^i(\xi):=\Pi_r^{C^i}v(\xi)\qquad\text{and}\qquad
    v^0(\xi):=v(\xi)-\sum_iv^i(\xi).
\end{equation*}
Lemma \ref{th:trunc_est} implies there is a universal constant independent
of $\Om$ such that $\|Dv^i\|_2\leq C\|Dv\|_2$ for any $i=0,\ldots,N$.
Adding and subtracting terms, we write
\begin{align}
  \<\del E^{\Om}(z),v\>&=\<\del E^{\Om}(z),v^0\>
    +\sum_i\b\<\b[\del E^\Om(z)-\del E^\L(y^i)\b],
    v^i\b\>\notag\\
  &\qquad\qquad+\sum_i\b\<\b[\del E^\L(y^i)
    -\del E^\L\b(\yh^i+ u\circ G^i\b)\b],v^i\b\>,\notag\\
  &=:\mathrm{T}_1+\mathrm{T}_2+\mathrm{T}_3.\label{eq:fin:pred_prf:decomp}
\end{align}
We estimate each of these terms in turn.

{\it The term ${\rm T}_1$: } Applying Lemma \ref{th:residual_sum_ycorr} and
the fact that $z=\sum_{i=1}^N\yh^i+\ycorr^i$ in $\supp(v^0)$, we
make a similar estimate to that in Lemma \ref{th:full_latt_cons}:
\begin{align}
  \b|\mathrm{T}_1\b|&= \bg|\sum_{b\in\BOm}g_b Dv^0_b
    +\sum_{b\in\partial W^\Om}\Sigma_bDv^0_b\bg|\notag\\
  &\leq c_1\Bg(\bg(\sum_{\substack{(C,s)\in\D,\,b\in\BOm\\\dist(b,C)
    \geq r/2-1}} \b(1+\dist(b,C)\b)^{-6}\bg)^{1/2}
    +\|\nabla^2\ycorr\|_{\LL^2(W^\Om)}\Bg) \|Dv^0\|_2\notag\\
  &\qquad\quad+c_2\bg(\sum_{\zeta\in\partial W^\Om\cap\partial U^\Om}\b(1+\dist(P_\zeta,C^i)\b)^{-2}\bg)^{1/2}\|Dv^0\|_2,\notag\\
    &\leq c\B(r^{-2}+S_\D^{-1}\log(S_\D)+\ind\b(\partial W^\Om\b)^{1/2}S_\D^{-1/2}\B)\|Dv^0\|_2.
    \label{eq:fin:predprf:T1}
\end{align}
To arrive at the final line we have used \eqref{eq:D2ycorr_bnd}, and the
constant $c$ here is independent of the domain and the index.

{\it The term ${\rm T}_2$: }
For the second set of terms, we have $z-y^i=\sum_{j\neq i}\yh^j
+\sum_j\ycorr^j$ in the support of $v^i$.
We expand as in Lemma \ref{th:full_latt_cons} to obtain
\begin{multline}
  \b\<\del E^\Om(z)-\del E\b(y^i\b),v^i\b\>
    =\sum_{b\in\BOm}\psi''(s_b)\bg(\sum_{j\neq i} D\yh^j_b
    +\sum_{j=0}^N D\ycorr^j_b\bg)Dv^i_b,\\
  =\psi''(0)\sum_{b\in\BOm}\bg(\sum_{j\neq i} D\yh^j_b+
    D\ycorr^j_b\bg)Dv^i_b +\psi''(0)\sum_{b\in\BOm} D\ycorr^i_bDv^i_b
    +\sum_{b\in\BOm}h_bDv^i_b,\label{eq:fin:predprf:T2taylor}
\end{multline}
where $|s_b|\lesssim\sum_j(1+\dist(b,C^j))^{-1}+S_\D^{-1}$ and a 
Taylor expansion yields
\begin{equation*}
  |h_b| = \B|\b(\psi''(s_b) - \psi''(0)\b)\B(\sum_{j \neq i} D
    \yh^j_b+\sum_{j=0}^N D\ycorr^j_b\B)\B| \lesssim |s_b|^2\,r^{-1}.
\end{equation*}
Applying Lemma \ref{th:residual_sum_ycorr} to the first term in
\eqref{eq:fin:predprf:T2taylor}, a similar argument to that used to arrive
at \eqref{eq:fin:predprf:T1} gives
\begin{equation*}
  \sum_{b\in\BOm}\bg(\sum_{j\neq i} D\yh^j_b+D\ycorr^j_b\bg)Dv^i_b
    \leq c\B(r^{-2}+S_\D^{-1}\log(S_\D)\B)\|Dv^i\|_2.
\end{equation*}
Applying the global form of \eqref{eq:Dycorr_bnd} to the second term in
\eqref{eq:fin:predprf:T2taylor},
\begin{equation*}
  \sum_{b\in\BOm}D\ycorr^i_bDv^i_b\leq c_1\,rS_\D^{-1}\|Dv^i\|_2,
\end{equation*}
and finally,
\begin{equation*}
  \sum_{b\in\BOm}h_bDv^i_b\leq r^{-1}\Bg(\bg(
    \sum_{\substack{b\in\BOm,\,(C,s)\in\D\\\dist(b,C^i)\leq r+1}}
    \b(1+\dist(b,C)\b)^{-4}\bg)^{1/2}+rS_\D^{-2}\Bg)\|Dv^i\|_2\leq
    c\B(r^{-1}+S_\D^{-2}\B)\|Dv^i\|_2.
\end{equation*}
Combining these estimates gives
\begin{equation}
  \<\del E^\Om(z)-\del E(y^i),v^i\>
    \leq c\b(r^{-1}+S_\D^{-1}\log(S_\D)+r S_\D^{-1}\b)\|Dv^i\|_2.
    \label{eq:fin:predprf:T2}
\end{equation}

{\it The term ${\rm T}_3$: }
The final group may be once more estimated using the truncation result of
Lemma \ref{th:trunc_est}, giving
\begin{equation}
  \b|\b\<\del E(y^i)-\del E(\yh^i+u\circ G^i),v^i\b\>\b|\lesssim
    R^{-1}\|Dv^i\|_2.\label{eq:fin:predprf:T3}
\end{equation}

{\it Conclusion: } Inserting the estimates \eqref{eq:fin:predprf:T1},
\eqref{eq:fin:predprf:T2} and \eqref{eq:fin:predprf:T3} into
\eqref{eq:fin:pred_prf:decomp}, and using the fact that
$\|Dv^i\|_2\lesssim \|Dv\|_2$, we obtain the bound
\begin{equation*}
  \b|\<\del E^\Om(z),v\>\b|\lesssim \B(L_\D^{-1}+S_\D^{-1/2}\B)
   \|Dv\|_2.\qedhere
\end{equation*}
\end{proof}

\subsection{Stability of the predictor}

Next we prove the stability of the predictor configuration defined in
\eqref{eq:appr_sol_fin}.

\begin{lemma}
\label{th:fin_latt_stab}
Given $I_0$ and $N\in\N$, there exist $R_0=R_0(N)$, $L_0=L_0(N)$ and
$S_0=S_0(N,I_0)$ such that whenever $z$ is the approximate solution 
corresponding to a dislocation configuration $\D$ in a convex lattice polygon
$\Om$ with truncation radius $R$ given in \eqref{eq:appr_sol_fin}, and
furthermore:
\begin{enumerate}
  \item $\ind(\partial W^\Om)\leq I_0$,
  \item $S_\D\geq S_0$, $L_\D\geq L_0$ and $R\geq R_0$,
\end{enumerate}
then there exists $\lambda\geq\ld/2$ such that
\begin{equation*}
  \<\ddel E^\Om(z)v,v\>\geq \lambda\|Dv\|_2^2\qquad\text{for all}\quad v\in\Hsi(\Om).
\end{equation*}
\end{lemma}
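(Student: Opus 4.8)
The plan is to follow the contradiction-and-compactness argument of Lemma \ref{th:full_latt_stab} as closely as possible, introducing two new ingredients to account for the boundary: the localisation around each core must be carried out so that the truncated pieces land in regions that look like $\L$, which is possible precisely because $S_\D$ is large; and the boundary corrector $\ycorr$ now contributes to $Dz$, so its contribution to the Hessian must be shown to be negligible. First, exactly as in Lemma \ref{th:full_latt_stab}, I would use Lemma \ref{th:disl_trunc_stab} to fix a truncation radius $R\geq R_0$ for which $\ldR\geq 3\ld/4>0$, and then argue by contradiction. Supposing the statement fails, there is a sequence of convex lattice polygons $\Om^n$ with $\ind(\partial W^{\Om^n})\leq I_0$, carrying configurations $\D^n$ with fixed sign-counts (after passing to a subsequence) and with $L^n:=L_{\D^n}\to\infty$ and $S^n:=S_{\D^n}\to\infty$, together with predictors $z^n$ from \eqref{eq:appr_sol_fin}, such that $\lambda_n:=\inf_{\|Dv\|_2=1}\<\ddel E^{\Om^n}(z^n)v,v\>$ satisfies $\lambda_n<\ld/2$; I then pick near-minimisers $v^n$ with $\|Dv^n\|_2=1$ and $\<\ddel E^{\Om^n}(z^n)v^n,v^n\>\leq\lambda_n+n^{-1}$.

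The crucial localisation step is to apply the concentration-compactness result \cite[Lemma 4.9]{EOS13} to obtain radii $r^n\to\infty$, but now with the additional constraint $r^n\leq\min\{L^n/3,\tfrac12 S^n\}$. Since $S^n\to\infty$ this guarantees that the truncated pieces $w^{n,i}:=\Pi^{C^{n,i}}_{r^n}v^n$ are supported away both from the other cores and from $\partial W^{\Om^n}$, so that after translating each core to the origin by $H^{n,i}$ the functions $\bar w^{n,i}:=w^{n,i}\circ H^{n,i}$ may be regarded as elements of $\Hsi(\L)$ and extracted to converge weakly there, while $(v^n-w^{n,i})\circ H^{n,i}\wto 0$. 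Setting $w^{n,0}:=v^n-\sum_i w^{n,i}$, the overlap structure is identical to the infinite-lattice case, so again only the diagonal terms and the $w^{n,0}$--$w^{n,i}$ cross terms survive.

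The term-by-term estimates then proceed as in Lemma \ref{th:full_latt_stab}, with the corrector entering only through lower-order perturbations. On $\supp(w^{n,i})$ the predictor composed with $H^{n,i}$ equals $\yh+\Pi_R u$ up to contributions from the remaining cores (of size $O((L^n)^{-1})$) and from $\ycorr$; by the uniform $\LL^\infty$ bound $\|D\ycorr\|_\infty\leq c_1 S_\D^{-1}$ of \eqref{eq:Dycorr_bnd} the latter is $O((S^n)^{-1})$, and the evenness of $\psi$ absorbs the sign $s^{n,i}$, so Lemma \ref{th:disl_trunc_stab} together with a $\|\psi'''\|_\infty$--Lipschitz estimate yields $\<\ddel E^{\Om^n}(z^n)w^{n,i},w^{n,i}\>\geq(\ldR-\eps^n)\|Dw^{n,i}\|_2^2$ with $\eps^n\to 0$. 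For the far-field term, on $\supp(w^{n,0})$ one has $|Dz^n_b|\lesssim r^{-1}+\|D\ycorr\|_\infty\to 0$ uniformly in $b$, whence $\psi''(Dz^n_b)\geq\psi''(0)-\eps^n$, and Lemma \ref{th:0_glob_stab} gives $\<\ddel E^{\Om^n}(z^n)w^{n,0},w^{n,0}\>\geq(\psi''(0)-\eps^n)\|Dw^{n,0}\|_2^2\geq(\ld-\eps^n)\|Dw^{n,0}\|_2^2$. The cross terms vanish exactly as in \eqref{eq:lb_xterms} using the weak convergences above, and the norm recombination $\liminf_n(\sum_i\|Dw^{n,i}\|_2^2-1)\geq 0$ follows as in \eqref{eq:liminf_norms}. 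Assembling these lower bounds forces $\lambda_n\geq 2\ldR/3\geq\ld/2$ for all large $n$, contradicting $\lambda_n<\ld/2$.

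The main obstacle is the far-field term near the boundary: in the infinite-lattice proof the complement $w^{n,0}$ lives entirely in the translation-invariant homogeneous lattice, whereas here its support reaches all the way to $\partial W^{\Om^n}$, where $z^n=\ycorr$ is nonzero and the lattice environment is not homogeneous. The argument survives only because the corrector gradient is uniformly small, $\|D\ycorr\|_\infty=O(S_\D^{-1})$, so the boundary layer perturbs $\psi''$ by a vanishing amount; the hypothesis $\ind(\partial W^\Om)\leq I_0$ is what renders this smallness — and hence the choice of $S_0$ — uniform across the varying domains $\Om^n$. I would expect the most delicate point to be verifying the weak compactness and localisation of $v^n$ uniformly as the domains $\Om^n$ themselves change with $n$, which is exactly why the receding-boundary condition $S^n\to\infty$, forcing $r^n\leq\tfrac12 S^n$, is essential.
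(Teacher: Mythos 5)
Your proposal is essentially correct and shares the paper's overall architecture (contradiction, concentration--compactness via \cite[Lemma 4.9]{EOS13} with $r^n\lesssim S^n$, far field handled by Lemma \ref{th:0_glob_stab} plus the uniform smallness of $\nabla\ycorr$ from \eqref{eq:Dycorr_bnd}, vanishing cross terms, norm recombination). There is, however, one structural difference worth noting. You negate the statement over $L_0$ and $S_0$ simultaneously, so your contradicting sequence has \emph{both} $L^n\to\infty$ and $S^n\to\infty$; every core then separates from every other, each localised piece $w^{n,i}$ sees a single core, and the single-core truncated stability of Lemma \ref{th:disl_trunc_stab} suffices for the diagonal terms. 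The paper instead fixes $R_0$ and $L_0$ once and for all from the infinite-lattice analysis (so that they depend only on $N$) and contradicts only over $S_0$. Along its sequence the mutual core distances need only satisfy $\dist(C^{n,i},C^{n,j})\geq L_0$ and may remain bounded, so cores can cluster; the paper therefore introduces the equivalence relation $i\sim j$ iff $\dist(C^{n,i},C^{n,j})$ stays bounded, localises around each \emph{cluster}, and invokes the full multi-core infinite-lattice stability result (Lemma \ref{th:full_latt_stab}) for the cluster predictors $y^{n,[i]}$ rather than Lemma \ref{th:disl_trunc_stab}. Your route is genuinely simpler, but it buys this simplicity at the price of the constant dependence: since your contradiction is run with $I_0$ fixed, the resulting $L_0$ a priori depends on $I_0$ as well as $N$, whereas the lemma (and Theorem \ref{th:full_latt}) assert $L_0=L_0(N)$. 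This is a quantitative weakening rather than a gap, and the rest of your estimates (in particular extending $Dv^n$ by zero to $\ell^2(\Bonds)$ so the translated pieces live in $\Hsi(\L)$, which you should state explicitly) go through as in the paper.
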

\begin{proof}
Fixing $I_0$ and $N$, we choose $R_0$ and $L_0$ such that the conclusion of
Lemma \ref{th:full_latt_stab} holds for any dislocation configuration $\D$ in
$\L$ with $|\D|=N$. Throughout the proof, we fix $R$ to be any number
with $R\geq R_0$, and we will consider only configurations such that
$L_\D\geq L_0$.

Suppose for contradiction that there exists a sequence of domains $\Om^n$
with accompanying dislocation configurations $\D^n$ which together satisfy
\begin{enumerate}
  \item $\ind(\partial W^{\Om^n})=I_0$,
  \item $N:=|\D^n|$, $|\{(C,+1)\in\D^n\}|$ and $|\{(C,-1)\in\D^n\}|$ are
    constant,
  \item $(C_0,+1)\in\D^n$,
  \item $S^n:=S_{\D^n}\to\infty$ as $n\to\infty$ and
  \item $\ddel E^{\Om^n}(z^n)<\ld/2$ for all $n$, where 
    \begin{equation*}
      z^n:=\sum_{(C,s)\in\D^n}s(\yh+\Pi_Ru)\circ G^C+\ycorr^n,
    \end{equation*}
    and $\ycorr^n$ solves \eqref{eq:y_corr_prob} with $\Om=\Om^n$.
\end{enumerate}
We note that condition (3) may be assumed without loss of generality by
applying lattice symmetries. Condition (5) implies that there exists
$v^n\in\Hsi(\Om^n)$ such that $\|Dv^n\|_2=1$ and
\begin{equation*}
  \lambda^n:=\inf_{\substack{v\in\Hsi(\Om^n)\\\|Dv\|_2=1}}
    \<\ddel E^{\Om^n}(z^n)v,v\>=\<\ddel E^{\Om^n}(z^n)v^n,v^n\><\ld/2,
\end{equation*}
since this is a minimisation problem for a continuous function over a compact
set.

For each $n$, enumerate $(C^{n,i},s^{n,i})\in\D^n$, and let
$G^{n,i}:= G^{C^{n,i}}$ and $H^{n,i}:= H^{C^{n,i}}$. Considering $Dv^n$ as
an element of $\ell^2(\Bonds)$ by extending
\begin{equation*}
  Dv^n_b:=\cases{
    Dv^n_b & b\in\BOm,\\
    0      & b\in\Bonds\setminus\BOm,
  }
\end{equation*}
there exists a subsequence such that $Dv^n\circ H^{n,i}$ is weakly convergent
for each $i$. For given $i$ and $j$, $\dist(C^{n,i},C^{n,j})$ either 
remains bounded or tends to infinity, and so define an equivalence relation
$i\sim j$ if and only if $\dist(C^{n,i},C^{n,j})$ is uniformly bounded as
$n\to\infty$.

By possibly taking further subsequences, we may assume that if $i\sim j$ then
$Q^{ji}:=G^{n,j}\circ H^{n,i}$ is constant along the sequence, and hence if
$Dv^n\circ H^{n,i}\wto D\bar{v}^i$ for each $i$,
\begin{equation*}
  D\bar{v}^j\circ Q^{ji} = D\bar{v}^i\qquad\text{when}
    \quad i\sim j.
\end{equation*}
For each equivalence class, $\lbrack i\rbrack$, define
\begin{equation*}
  y^{n,\lbrack i \rbrack}:= \sum_{j\in\lbrack i\rbrack}s^j(\yh+\Pi_Ru)
   \circ G^{n,j}.
\end{equation*}
Using the result of \cite[Lemma 4.9]{EOS13}, there exists
a sequence $r^n\to\infty$ which we may also assume satisfies
\begin{equation*}
 r^n\leq \min_{i\nsim j}
  \b\{\dist(C^{n,i},C^{n,j})\b\}/5\quad\text{and}\quad r^n\leq S^n/5,
\end{equation*}
so that, defining $w^{n,\lbrack i\rbrack} := \Pi_{r^n}^{C^{n,i}}v^n$,
\begin{equation*}
w^{n,\lbrack i\rbrack}\circ H^{n,i}\to\bar{w}^{\lbrack i\rbrack}\text{ in }
\Hsi(\L)\qquad\text{and}\qquad(Dv^n-Dw^{n,\lbrack i\rbrack})\circ H^{n,i}\wto 0
\text{ in }\ell^2(\Bonds),
\end{equation*} where $i$ is a fixed representative of $[i]$. Further defining
$Dw^{n,0}:=Dv^n-\sum_{\lbrack i \rbrack} Dw^{n,[i]}$, we have
\begin{equation*}
\<\ddel E^{\Om^n}(z^n)v^n,v^n\>=\<\ddel E^{\Om^n}(z^n)w^{n,0},w^{n,0}\>+
\sum_{\lbrack i\rbrack}\B(2\<\ddel E^{\Om^n}(z^n)w^{n,\lbrack i\rbrack},
  w^{n,0}\>+\<\ddel E^{\Om^n}(z^n)w^{n,\lbrack i\rbrack},
  w^{n,\lbrack i\rbrack}\>\B).
\end{equation*}
The definition of $r^n$ and \eqref{eq:Dycorr_bnd} imply that
$\|\nabla\ycorr^n\|_{\LL^\infty(U^{\Om^n})}\leq c_1/r^n$, where $c_1$ is 
independent of $n$, so in a similar fashion to the proof of 
Lemma~\ref{th:full_latt_stab}, we obtain:
\begin{align*}
  \b\<\ddel E^{\Om^n}(z^n) w^{n,0},w^{n,0}\b\>&=
    \b\<[\ddel E^{\Om^n}(z^n)-\ddel E^{\Om^n}(0)]w^{n,0},w^{n,0}\b\>
    +\b\<\ddel E^{\Om^n}(0) w^{n,0},w^{n,0}\b\>\\
  &\geq \b(\psi''(0)-c/r^n\b)\b\|Dw^{n,0}\b\|_2^2,\\
  \b\<\ddel E^{\Om^n}(z^n) w^{n,\lbrack i\rbrack},w^{n,\lbrack i\rbrack}\b\>&=
  \b\<[\ddel E^{\Om^n}(z^n)-\ddel E^{\L}(y^{n,\lbrack i\rbrack})]
    w^{n,\lbrack i\rbrack},w^{n,\lbrack i\rbrack}\b\>+
    \b\<\ddel E^{\L}(y^{n,\lbrack i\rbrack}) w^{n,\lbrack i\rbrack},
    w^{n,\lbrack i\rbrack}\b\>,\\
  &\geq \b(\lLR-c/r^n\b)\b\|Dw^{n,\lbrack i\rbrack}\b\|_2^2,
    \qquad\text{and}\\
  \b\<\ddel E^{\Om^n}(z^n) w^{n,0},w^{n,\lbrack i\rbrack}\b\>&\to0\qquad
    \text{as}\quad n\to\infty,
\end{align*}
where $c$ represents a constant independent of $n$.
Furthermore, the arguments of the proof of Lemma \ref{th:full_latt_stab}
imply that
\begin{equation*}
  \liminf_{n\to\infty}\B(\sum_i \|Dw^{n,\lbrack i\rbrack}\|_2^2
     -\|Dv^n\|_2^2\B)\geq0,
\end{equation*}
and so we deduce that
\begin{equation*}
  \lambda_n = \b\<\ddel E^{\Om^n}(z^n)v^n,v^n\b\>\geq \ld/2>0
\end{equation*}
for $n$ sufficiently large, providing the required contradiction.
\end{proof}

\subsection{Conclusion of the proof of Theorem \ref{th:full_latt},
  Convex Lattice Polygon Case}
To conclude the proof of conclusions (1) and (2) of Theorem
\ref{th:full_latt}, we may apply small modifications of the arguments
used in \S\ref{sec:inf:conclusion1} and \S\ref{sec:inf:conclusion1},
and hence we omit these.

To prove conclusion (3), recall the result of Lemma \ref{th:0_glob_stab},
which states that $y\equiv0$ is a globally stable equilibrium in any lattice
domain. When $\Om$ is a convex lattice polygon,
$\Us(\Om)\subset\Hsi(\Om)$, so if $z+w$ is the local equilibrium for $E^\Om$
constructed in (1), then $-z-w\in\Hsi(\Om)$, and furthermore
\begin{equation*}
  E(z+w-z-w;z+w) = E(0;z+w) = - E(z+w;0) < 0,\qquad\text{as}\qquad
    0=\argmin_{u\in\Hsi(\Om)} E(u;0).
\end{equation*}

\bibliographystyle{plain}
\bibliography{qc}

\end{document}